\def\int{\operatorname{int}}
\newtheorem{thm}{Theorem}[section]
\newtheorem{lemma}[thm]{Lemma}
\newtheorem{cor}[thm]{Corollary}
\newtheorem{prop}[thm]{Proposition}
\newtheorem*{main}{Main Theorem}{}
\theoremstyle{remark}
\newtheorem{example}[thm]{Example}
\newtheorem{definition}[thm]{Definition}
\newtheorem{remark}[thm]{Remark}
\newtheorem*{definition*}{Definition}
\newtheorem*{remark*}{Remark}
\newtheorem{notation}[thm]{Notation}
\def\R{{\mathbb R}}
\def\Z{{\mathbb Z}}
\def\S{{\mathcal S}}
\def\F{{\mathcal F}}
\def\X{{\mathcal X}}
\def\diam{\operatorname{diam}}
\def\FF{{\mathbb F}}
\def\XS{\mathcal C} 
\newcommand{\<}{\langle}
\renewcommand{\>}{\rangle}
\title{Subfactor projections}
\author{Mladen Bestvina and Mark Feighn\thanks{Both
    authors gratefully acknowledge the support by the National
    Science Foundation.}}
\date{January 26, 2013}
\begin{document}

\maketitle

\begin{abstract}{When two free factors $A,B$ of a free group $\FF_n$ are in
  ``general position'' we define the projection $\pi_A(B)$ of $B$ to the
  splitting complex (alternatively, the complex of free factors)
  $\S(A)$ of $A$. We show that the projections satisfy properties
  analogous to subsurface projections introduced by Masur and
  Minsky. We use the subfactor projections to construct an action of
  $Out(\FF_n)$ on a finite product of hyperbolic spaces where every
  automorphism with exponential growth acts with positive translation
  length. We also prove a version of the Bounded geodesic image
  theorem.
In the appendix, we give a sketch of the proof of the Handel-Mosher
hyperbolicity theorem for the splitting complex using (liberal)
folding paths.}
\end{abstract}

\section{Introduction}

The work of Howard Masur and Yair Minsky \cite{MM,MM2} has served as
an inspiration in the study of $Out(\FF_n)$, the outer
automorphism group of the free group $\FF_n$ of rank $n$. Their work
starts by proving that the curve complex associated to a compact
surface is hyperbolic. There are several established analogs of the curve
complex in the $Out(\FF_n)$ setting that have recently
been shown to be hyperbolic, see \cite{F,S,Brian} and \cite[Section 8]{HH}.

The complex $\F$ of free factors is the simplicial complex whose
vertices are conjugacy classes of nontrivial free factors of $\FF_n$ and
a simplex is formed by those vertices that have representatives that
can be arranged in a chain. (A subgroup $A$ of $\FF_n$ is a nontrivial
free factor if $\FF_n=A*B$ for another subgroup $B$ with $A\neq 1\neq
B$.) 

The complex $\S$ of free splittings, or the {\it sphere complex}, or
the simplicial completion of Outer space, is the simplicial complex
whose $k$-simplices are $(k+1)$-edge free splittings (nontrivial
minimal reduced simplicial $\FF_n$-trees with trivial edge stabilizers
and $(k+1)$ orbits of edges), up to
equivariant isomorphism.

\begin{thm}[\cite{F}]\label{bf}
$\F$ is hyperbolic.
\end{thm}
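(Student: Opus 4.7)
The plan is to invoke a combinatorial criterion for hyperbolicity (of Masur--Minsky or Bowditch type): it suffices to exhibit a family of paths joining every pair of vertices of $\F$ such that the corresponding triangles are uniformly thin. The candidate paths will come from projecting Stallings folding paths in Culler--Vogtmann Outer space to $\F$.

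First I would set up a coarse projection $\pi$ from Outer space to $\F$: for a marked metric graph $G$, let $\pi(G)$ be the set of conjugacy classes of fundamental groups of proper connected noncontractible subgraphs of $G$. A direct argument shows that $\pi(G)$ has uniformly bounded $\F$-diameter. Given two vertices $A,B$ of $\F$, choose representatives $G_0,G_1$ in Outer space realizing $A$ and $B$ as such subgraphs, and connect $G_0$ to $G_1$ by a folding path $(G_t)_{t\in[0,1]}$. The candidate path in $\F$ is then $t\mapsto \pi(G_t)$.

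The main technical step is to show that these projected folding paths are unparametrized quasi-geodesics in $\F$, with uniform constants. Two properties are needed: a \emph{progress} estimate (over any long enough sub-interval, the $\F$-distance traced by $\pi(G_t)$ grows linearly) and a \emph{no-backtracking} estimate (if a free factor $C$ lies close to both $\pi(G_s)$ and $\pi(G_t)$ for $s<t$, then $C$ lies close to $\pi(G_r)$ for all intermediate $r$). Both should be derivable from combinatorial features of folding: folds only identify edges, so every proper subgraph of $G_t$ descends from a proper subgraph of an earlier $G_s$, and visibility of a given factor can be tracked across folds.

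Once the projected folding paths are uniform unparametrized quasi-geodesics, uniform thinness of triangles reduces to a fellow-travelling statement for pairs of folding paths with close endpoints, which follows from standard convergence properties of folding. The main obstacle I expect is the progress/no-backtracking estimate: it requires a delicate analysis of how the free-factor content of $G_t$ collapses as edges are identified, together with a quantitative statement that a proper free factor cannot be visible in too many distinct proper subgraphs of a given marked graph. Once this estimate is in place the hyperbolicity of $\F$ is immediate from the general criterion.
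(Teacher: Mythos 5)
This statement is not proved in the paper you were given: it is imported from \cite{F} (and Kapovich--Rafi later showed it also follows from the hyperbolicity of $\S$, whose proof is sketched in the appendix). So the comparison has to be with the argument of \cite{F}, whose overall skeleton --- project folding paths to $\F$ by sending $G_t$ to the free factors carried by proper connected noncontractible subgraphs, check that this set has bounded diameter, and then verify a Masur--Minsky-type criterion --- your proposal does correctly reproduce. The gaps are in the two places where you declare the work done. First, your ``progress estimate'' (linear growth of $\F$-distance over every sufficiently long subinterval) is false: folding paths only project to \emph{reparametrized} quasi-geodesics, and they can dwell for arbitrarily long parameter intervals while moving a bounded amount in $\F$ (this is exactly the phenomenon illustrated in the introduction and in Lemma \ref{3 intervals}, where all progress is confined to a short middle segment). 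Moreover, certifying that a path is an unparametrized quasi-geodesic \emph{before} knowing the ambient space is hyperbolic requires comparing it with actual geodesics, which is not available; this is why the criterion actually used is not ``quasi-geodesics with thin triangles.''

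Second, and more seriously, the sentence ``uniform thinness of triangles reduces to a fellow-travelling statement for pairs of folding paths with close endpoints, which follows from standard convergence properties of folding'' assumes precisely the theorem. Outer space with the Lipschitz metric is not non-positively curved, folding paths between nearby points are highly non-unique, and there is no soft convergence statement forcing their $\F$-projections to fellow-travel; indeed such fellow-travelling is a \emph{consequence} of hyperbolicity plus the quasi-geodesic property, not an input. What \cite{F} (and the appendix here, for $\S'$) actually verifies is: (1) folding paths form a coarsely transitive family, (2) there is a coarse Lipschitz retraction of the whole complex onto each folding path, and (3) each folding path is uniformly strongly contracting. Item (3) is the heart of the proof and requires the quantitative analysis of legal and illegal structure along folding paths (surviving legal segments, candidate loops, and the behavior of the projection under small perturbations of the target). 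Without an argument for contraction --- or some equivalent substitute --- your outline does not close.
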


\begin{thm}[Handel-Mosher \cite{S}]\label{hm2}
$\S$ is hyperbolic.
\end{thm}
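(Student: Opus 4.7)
The plan is to prove hyperbolicity of $\S$ via a Masur--Minsky / Bowditch style ``guessing geodesics'' criterion, where the guess for a geodesic between two splittings is furnished by a folding path between them in Outer space.

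First, I would move from the discrete complex $\S$ into Outer space $CV_n$. Every vertex of $\S$ is a free splitting, which after choosing an equivariant metric with unit volume becomes an (ideal) point of $CV_n$. Conversely, every tree $T\in CV_n$ (and, more generally, every very small $\FF_n$-tree) determines a nonempty set $\pi(T)\subset\S$ by equivariantly collapsing all but one orbit of edges; when $T$ itself is a one-edge free splitting, $\pi(T)=\{T\}$. Given two splittings $T_0,T_1$, I would interpolate them by a liberal folding path $\{T_t\}_{t\in[0,1]}$ in Outer space (in the Skora / Stallings sense, where a family of equivariant maps $T_0\to T_t$ folds illegal turns at unit speed; ``liberal'' allows simultaneous folding of finitely many turns as needed to stay in the right stratum). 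This yields a candidate path $t\mapsto \pi(T_t)\subset\S$.

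Second, I would establish the two axioms that Bowditch's criterion requires. \emph{Coarse Lipschitz progress:} a unit of folding time changes the projection to $\S$ by a bounded amount, so the path $\pi(T_t)$ is $\S$-continuous up to uniformly bounded jumps. \emph{Thinness of triangles:} for three folding paths with the same endpoints pairwise, I would show their $\S$-projections form a thin triangle. The key mechanism here is a ``progress lemma'' for folding paths: as $t$ increases, the projections $\pi(T_t)$ either stay within uniformly bounded $\S$-distance or the relevant free splitting (or a collapse) actually appears along the path. This is proved by analysing how edges in $T_t$ develop with $t$: once an orbit of edges is created that survives to $T_1$, it persists and controls the projection; before it appears, the projection is controlled by the ``currently surviving'' edges. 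A standard consequence is that if two folding paths $T_t$ and $T'_t$ with common endpoints have $d_\S(\pi(T_t),\pi(T_{t'}'))$ large at some matched time, then one can factor through a common collapse, forcing the paths to share a splitting nearby in $\S$.

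Third, with those two properties in hand, Bowditch's criterion (or the Masur--Minsky thin-triangles-imply-hyperbolicity lemma) yields that $\S$ is $\delta$-hyperbolic for a uniform $\delta$ and that folding paths project to unparameterized quasi-geodesics in $\S$.

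The main obstacle I expect is the progress lemma: controlling the behaviour of $\pi(T_t)$ along a folding path and, in particular, showing that any ``detour'' of the projection in $\S$ away from the endpoints is recorded by an actual edge orbit of $T_t$ that can be collapsed to produce a nearby splitting. The subtlety is that folding can both create new edge orbits (by separating currently identified segments) and destroy them (by equivariantly identifying edges), and one must organise these events so that they translate into bounded motion in $\S$ except when genuine geodesic progress is made. This is exactly where the liberal, rather than greedy, folding convention is convenient: it gives enough flexibility to arrange that at each moment some edge orbit of $T_t$ survives to $T_1$ and realises the projection, which is the combinatorial heart of the argument.
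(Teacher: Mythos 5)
Your high-level strategy (folding paths as candidate paths, fed into a hyperbolicity criterion) is in the right family, but the criterion you invoke and, more importantly, the key lemma you defer are not where the actual work lies, and as written the proposal has a genuine gap. The paper does not verify a Bowditch-style thin-triangles condition; it verifies the Masur--Minsky contraction axioms: (liberal) folding paths are coarsely transitive, each folding path admits a coarse Lipschitz retraction $\S'\to \{G_t\}$, and folding paths are uniformly strongly contracting. The retraction is defined not by matching times on two folding paths but by assigning to a vertex $\Gamma$ the supremum of times $t$ for which a \emph{combing diagram} exists --- a stack of collapse/expansion rectangles connecting an initial segment of the folding path to $\Gamma$. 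Your proposed mechanism for thinness (``if two folding paths with common endpoints are far apart in $\S$ at some matched time, one can factor through a common collapse'') is unsubstantiated and is not how the comparison works: two folding paths between the same endpoints need not fellow-travel in Outer space at all, and an arbitrary vertex of $\S$ is related to a folding path only through a zig-zag of collapses and expansions, each of which must be propagated backward along the entire path (the collapse and expansion lemmas and the combing-diagram theorem in the appendix).

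The ``progress lemma'' you flag as the main obstacle is indeed the heart of the matter, but the sketch you give (``once an edge orbit survives to $T_1$ it persists and controls the projection'') does not suffice. The control actually needed consists of: (i) the red/blue ``mixed regions'' estimate, bounding $d_{\S'}(G,G')$ for an optimal morphism $G\to G'$ by the number of regions of $G$ meeting both colors of a two-coloring pulled back from $G'$; (ii) the hanging-tree analysis showing that if $G$ and $G'$ have edges determining the same splitting, then any liberal folding path guided by an optimal $G\to G'$ has uniformly bounded image in $\S'$ (this is what makes the projection a coarse retraction); and (iii) the surgery on combing diagrams that replaces a stack of $N$ combing rectangles by a stack of two at the cost of bounded motion in $\S'$, which is what yields the coarse Lipschitz property and strong contraction. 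None of this machinery appears in your outline, and without it neither the thin-triangle condition nor the contraction condition can be verified; so the proposal is a plausible roadmap rather than a proof, with the decisive steps still missing.
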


Kapovich-Rafi \cite{kapovich-rafi} show how to derive Theorem \ref{bf}
from Theorem~\ref{hm2} in a simple way, and Hilion-Horbez \cite{HH} give a
simpler proof of Theorem~\ref{hm2} using surgery paths.

Masur and Minsky also define {\it subsurface projections}. If $\alpha$
is a simple closed curve in a surface $S$ and $A$ is a subsurface such
that $\alpha$ and $A$ cannot be made disjoint by an isotopy (we also
say they {\it intersect}), there is a curve $\alpha|A$ in the curve
complex of $A$, well defined up to a bounded set, obtained by
``closing up the intersection of $\alpha$ and $A$ along the boundary
of $A$''. If $A,B$ are two subsurfaces that intersect, one defines the
projection $\pi_A(B)$ of $B$ to $A$ by taking $\partial B|A$. We view
$\pi_A(B)$ as a bounded subset of the curve complex of $A$.  Masur and
Minsky use subsurface projections to estimate distances in mapping
class groups, and more generally, to understand their large-scale
geometry. To encapsulate this, we first recall a theorem.

\begin{thm}[\cite{BBF}]\label{bbf}
Let $\mathcal Y$ be a collection of $\delta$-hyperbolic spaces and
for every pair $A,B\in\mathcal Y$ of distinct elements suppose we are
given a uniformly bounded subset $\pi_A(B)\subset A$, called the {\emph
  {projection}} of $B$ to $A$. Denoting by $d_A(B,C)$ the diameter of
$\pi_A(B)\cup \pi_A(C)$, assume the following holds: there is a
constant $K>0$ such that
\begin{itemize}
\item if $A,B,C\in\mathcal Y$ are distinct, at most one of 3 numbers
  $$d_A(B,C), d_B(A,C), d_C(A,B)$$ is $>K$, and
\item for any distinct $A,B$ the set
$$\{C\in\mathcal Y-\{A,B\}\mid d_C(A,B)>K\}$$ is finite.
\end{itemize}
Then there is a hyperbolic space $Y$ and an isometric embedding of
each $A\in\mathcal Y$ onto a convex set in $Y$ so the images are
pairwise disjoint and the nearest point projection of any $B$ to any
$A\neq B$ is within uniformly bounded distance of
$\pi_A(B)$. Moreover, the construction is equivariant with respect to
any group acting on $\mathcal Y$ by isometries.
\end{thm}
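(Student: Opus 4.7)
The plan is to build $Y$ by blowing up a combinatorial model that records the coarse interaction among the projections.

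First, I would construct the projection complex $\mathcal{P}=\mathcal{P}_K(\mathcal{Y})$: a graph whose vertex set is $\mathcal{Y}$, in which distinct $A,B$ are joined by an edge whenever every $C\in\mathcal{Y}\setminus\{A,B\}$ satisfies $d_C(A,B)\le K$. The second axiom guarantees that the set of $C$ violating this is finite, so $\mathcal{P}$ is locally well-behaved. It is convenient first to replace the given $\pi_A(B)$ by slightly modified projections satisfying sharper triangle-type axioms, so that the relation ``$C$ lies between $A$ and $B$'' (meaning $d_C(A,B)>K$) becomes a genuine linear order on the finite set of intermediates; this modification is standard and costs only an increase in $K$.

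Second, I would show that, for $K$ sufficiently large, $\mathcal{P}$ is a quasi-tree. The linear order on intermediates gives a canonical combinatorial path from $A$ to $B$, and the first axiom (at most one large projection distance in any triple) forces this path to be a quasi-geodesic with a bottleneck property in the sense of Manning; hence $\mathcal{P}$ is hyperbolic.

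Third, I would form $Y$ by blowing up each vertex $A$ of $\mathcal{P}$ into an isometric copy of the $\delta$-hyperbolic space $A$, and replacing each edge $AB$ of $\mathcal{P}$ by an interval of fixed length joining a chosen point of $\pi_A(B)\subset A$ to one of $\pi_B(A)\subset B$. Because the entire construction depends only on the projection data, it is equivariant under any group acting on $\mathcal{Y}$ by isometries. Hyperbolicity of $Y$ then follows from a combination-theorem-style argument for a tree of hyperbolic spaces whose attaching loci, seen from inside any single vertex space, admit a linear order and pairwise bounded projections; convexity of each blown-up $A$, disjointness of the images, and agreement of nearest-point projection in $Y$ with $\pi_A(B)$ up to bounded error are then read off from the gluing. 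The main obstacle is Step two, where both axioms enter essentially --- the first to make betweenness behave like linearity in a tree, the second to make paths finite and navigable --- and where one must carefully track how the constants inflate under each modification so that a single global $K$ suffices.
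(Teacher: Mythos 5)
This theorem is quoted in the paper from \cite{BBF} and is not proved there, so there is no in-paper argument to compare against; your outline should be measured against the construction in the cited source. It matches that construction essentially step for step: the projection complex $\mathcal{P}_K(\mathcal Y)$, the modification of the projections so that the finitely many ``large'' intermediates between $A$ and $B$ are linearly ordered, the proof that $\mathcal{P}_K$ is a quasi-tree via Manning's bottleneck criterion, and the blown-up quasi-tree of metric spaces obtained by replacing vertices with the hyperbolic spaces and edges with bounded-length arcs joining $\pi_A(B)$ to $\pi_B(A)$, from which convexity, disjointness, and the coarse agreement of nearest-point projection with $\pi_A(B)$ are read off. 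So your proposal is correct in outline and takes the same route as the actual proof.
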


In the setting of subsurface projections, $\mathcal Y$ will be the
collection of curve complexes associated to a collection of pairwise
intersecting subsurfaces.
That both bullets hold in the setting of subsurface projections
(whenever they are both defined) is the work of Masur-Minsky \cite{MM,MM2} and
Behrstock \cite{Behrstock}. There is a simple proof of the first
bullet due to Leininger, see \cite{johanna}, and of the second bullet
in \cite{BBF}. 

It is further shown in \cite{BBF} that there is a finite equivariant
coloring of the collection of all subsurfaces of $S$, and the
boundaries of two distinct subsurfaces with the same color intersect.
Applying Theorem \ref{bbf} to each color separately and then taking
the cartesian product one obtains:

\begin{thm}[\cite{BBF}]
The mapping class group of $S$ acts by isometries on the finite product
$Y_1\times\cdots\times Y_k$ of hyperbolic spaces. Moreover, orbit maps
are quasi-isometric embeddings.
\end{thm}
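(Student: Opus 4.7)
The plan is to establish the two assertions separately: that $\mathrm{MCG}(S)$ acts by isometries on the product, and that any orbit map $g\mapsto g\cdot(x_1,\ldots,x_k)$ is a quasi-isometric embedding. The first assertion is essentially immediate from the equivariance clause in Theorem~\ref{bbf}: for each color class, $\mathrm{MCG}(S)$ permutes the subsurfaces of that color and the associated subsurface projections are mapping-class-group equivariant, so by Theorem~\ref{bbf} we obtain an isometric action on each $Y_i$, and hence a diagonal isometric action on the product with the $\ell^1$ (or $\ell^\infty$) metric.

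For the second assertion, the upper Lipschitz bound on orbit maps is routine: a fixed generating set for $\mathrm{MCG}(S)$ moves a basepoint in each $Y_i$ a uniformly bounded amount, since each generator changes subsurface projections by a bounded amount, and the embedding of each curve complex into $Y_i$ produced by Theorem~\ref{bbf} is isometric onto a convex subset. The core of the proof is the lower bound, for which I would invoke two distance formulas. The first is the Masur--Minsky distance formula, which expresses the word metric on $\mathrm{MCG}(S)$ (applied to a fixed marking $\mu$) as coarsely equal to the sum
\[
\sum_{W\subseteq S}\bigl[\,d_W(\mu,g\mu)\,\bigr]_K
\]
over all isotopy classes of essential subsurfaces $W$, where $[x]_K$ denotes $x$ if $x>K$ and $0$ otherwise. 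The second is the distance formula built into the BBF construction: the distance in the hyperbolic space $Y_i$ associated to color $i$ between the images of basepoints $x_W$ and $gx_W$ is coarsely comparable to the sum $\sum_W [d_W(x_W,gx_W)]_{K'}$ taken over subsurfaces $W$ of color $i$, at least after choosing the threshold sufficiently large.

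Putting these together, the $\ell^1$-distance in $Y_1\times\cdots\times Y_k$ between $(x_1,\ldots,x_k)$ and $g\cdot(x_1,\ldots,x_k)$ is coarsely $\sum_i\sum_{W\text{ of color }i}[d_W(\mu,g\mu)]_K$, which telescopes over the finite partition of subsurfaces by color to the full Masur--Minsky sum, hence to the word length of $g$. Combined with the Lipschitz upper bound, this yields a quasi-isometric embedding.

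The principal obstacle I expect is reconciling thresholds between the two distance formulas: the constants $K$ in Masur--Minsky and the projection constant appearing in Theorem~\ref{bbf} need not coincide, and the lower bound on distance in $Y_i$ only captures projections that are sufficiently large. One must therefore argue that enlarging the threshold in either formula changes the coarse equality only by a multiplicative/additive constant, which is a standard (but not entirely trivial) feature of both formulas. Once this bookkeeping is in place, the proof assembles cleanly from the equivariant statement of Theorem~\ref{bbf}, the finite coloring, and the two distance formulas.
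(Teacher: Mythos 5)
The paper does not prove this statement---it is quoted from \cite{BBF}, with the construction sketched as ``apply Theorem~\ref{bbf} to each color and take the product'' and the remark that the quasi-isometric embedding clause ``recasts the Masur--Minsky distance estimate.'' Your proposal is correct and follows exactly that intended route: isometric action from the equivariance clause of Theorem~\ref{bbf}, the Lipschitz upper bound from generators moving projections boundedly, and the lower bound by matching the BBF distance formula for each $Y_i$ against the Masur--Minsky distance formula, with the threshold bookkeeping you flag being the standard (and genuine) point to check.
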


The second statement in the theorem recasts the Masur-Minsky distance
estimate.

The goal of this paper is to take a step in this direction in the
$Out(\FF_n)$ setting. If $A$ and $B$ are distinct conjugacy
classes of nontrivial free factors, we attempt to define $\pi_A(B)$, a
bounded subset of the splitting complex $\S(A)$ of $A$. We start with
a simplicial $\FF_n$-tree $T$ representing a splitting of
$\FF_n$ where $B$ fixes a vertex. Restricting the action to
$A$ and passing to the minimal subtree (assuming $A$ does not fix a
vertex of $T$) gives a splitting of $A$. Different choices of $T$ will
in general produce unbounded sets in $\S(A)$, but when $A$ and $B$ are
in ``general position'' the ambiguity is bounded.

\begin{main}
Assume $n=rank(\FF_n)\geq 3$.
The set of vertices of $\F$ can be equivariantly colored in finitely
many colors so that when $A$ and $B$ are distinct and either have the
same color or are at distance $>4$ then the projection $\pi_A(B)$
is well defined as a bounded subset of $\S(A)$. Moreover, restricting
to one color, both bullets in Theorem \ref{bbf} hold.
\end{main}

The Main Theorem is proved in Section 4.
We thus obtain an isometric action of $Out(\FF_n)$ on a finite product
of hyperbolic spaces. However, orbit maps are not quasi-isometric
embedding. We discuss this further in Section \ref{further}.
\vskip 0.3cm

\noindent
{\bf Theorem 5.1.} {\it $Out(\FF_n)$ acts on a finite product
  $Y=Y_1\times\cdots\times Y_k$ of hyperbolic spaces so that every
  exponentially growing automorphism has positive translation length.}

\vskip 0.3cm

The key to understanding the projections is the analysis of a path in
Outer space $\X(A)$ of $A$ induced by a path $G_t$ in $\X$. Here is a
motivating example. For a review of the terminology, see Section
\ref{review}. 

Consider the automorphism $\Phi$ of ${\mathbb F}_3=\langle
x,y,z\rangle$ given by $\phi:{\mathbb F}_3\to {\mathbb F}_3$,
$x\mapsto y$, $y\mapsto z$ and $z \mapsto zx$. The map on the rose
$R_0$ with loops representing $x,y,z$ is a train track map for
$\Phi$. The vertex has 4 gates, namely $\{x,y,z\}, \{X\}, \{Y\}$ and
$\{Z\}$ (the direction oriented the same way as the edge $x$ is also
called $x$; the direction pointing the opposite way is $X$ etc). The
dilatation $\lambda$ of $\Phi$ satisfies $\lambda^3-\lambda^2-1=0$ and
$\lambda= 1.46557\cdots$.  The lengths of edges $x,y,z$ are
proportional to $1,\lambda,\lambda^2$.

Let $A=\langle x,y\rangle$. We are interested in picturing the core of
the covering space corresponding to $A$ of the graphs $R_k=R_0\Phi^k$,
which we view as a discrete path in $\X$ as $k\in\Z$ (but one can
extend it to an axis $R_k$, $k\in\R$ of $\Phi$). This core has an
induced metric and the structure of legal and illegal turns.  Since
$\ell_{R_0\Phi^k}(\gamma)=\ell_{R_0}(\Phi^k(\gamma))$, this amounts to
picturing the $\phi^k(A)$-cover of $R_0$. See Figure \ref{example}.

\begin{figure*}[h]
\centering
\includegraphics[scale=0.15]{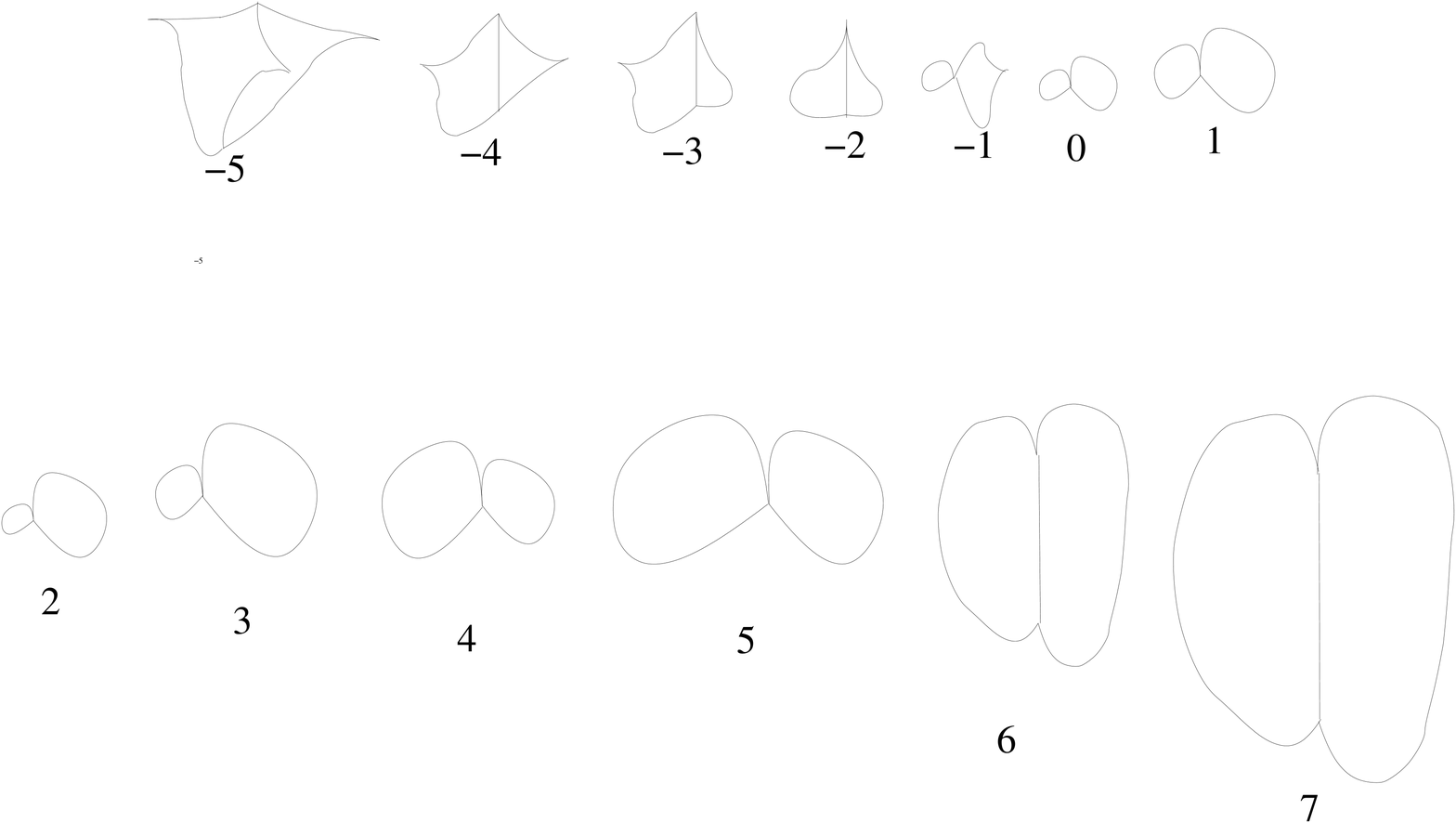}
\caption{$A|R_0\Phi^k$ for $-5\leq k\leq 7$.}
\label{example}
\end{figure*}

For example, for $k=2$ we compute $\phi^2(A)=\langle
z,xz\rangle=\langle x,z\rangle$, and for $k=6$ we get
$\phi^6(A)=\langle zxyzzx,zxyzzxzxy\rangle=\langle zxy,zzx\rangle$
which is represented by a theta graph with the middle edge $z$ going
up, the left edge $xy$ and the right edge $zx$, both going down. The
illegal turn is at the top.

The reader should note the following features:
\begin{itemize}
\item As $k\to \infty$ the volume of $A|R_k$ goes to $\infty$ and in
  fact the topological edges of $A|R_k$ are legal and grow
  exponentially.
\item As $k\to -\infty$ the volume of $A|R_k$ goes to $\infty$. The
  topological edges consist of an exponentially growing number of
  legal segments of length $<2$ and the turns between consecutive such
  segments are illegal.
\end{itemize}

Our two main technical lemmas restrict the interval where $A|R_k$ can
make progress in $\S(A)$. Lemma \ref{3 intervals} says:
\begin{itemize}
\item If $A|R_k$ has an illegal turn in a topological edge or a vertex
  with one gate, then the image of $A|R_t$, $t\in (-\infty,k]$, in
    $\S(A)$ is uniformly bounded.
\item If $A|R_k$ has a legal segment of length 2 in a topological
  edge, then the image of $A|R_t$, $t\in [k,\infty)$, in $\S(A)$ is
    uniformly bounded.
\end{itemize}

An easy computation shows that the first bullet holds for $k=-1$ and
the second for $k=9$. Thus all the progress in $\S(A)$ is restricted
to the interval $[-1,9]$. Lemma \ref{uniform crossing} shrinks this
interval further: if every edge of $R_k$ has at least two lifts to
$A|R_k$ then the image of $A|R_t$, $t\in [k,\infty)$, in $\S(A)$ is
  uniformly bounded. In our example this holds for $k=7$.

To simplify the analysis, we will work with {\it greedy} folding paths
as we did in \cite{F}. The above path (an axis of $\Phi$) is {\it not}
a greedy folding path since not all illegal turns are folding. For
example, we will argue in the proof of Lemma \ref{3 intervals} that if
$A|G_{t_0}$ has an interior illegal turn then the same is true for all
$A|G_t$ for $t<t_0$. Note that this fails for our path $R_k$.

We conclude the paper with the Bounded geodesic image theorem,
analogous to the theorem of Masur and Minsky \cite{MM2}.

\vskip 0.3cm
\noindent
{\bf Theorem 5.3} (Bounded geodesic image theorem).
{\it Let $(A_i)$ be a geodesic in $\F$ and $A\in\F$ at distance $>4$
from each $A_i$. Then the set of projections of the $A_i$'s to $\S(A)$
is a uniformly bounded set.}
\vskip 0.3cm

Sabalka and Savchuk in \cite{SS} define a different notion of
projection in the language of sphere systems. It is a partially
defined map from the sphere complex of $M=\#_1^n S^1\times S^2$ to the sphere
complex of a submanifold of $M$ which is a complementary component of
a sphere system. They also prove versions of the Behrstock
inequality and the Bounded geodesic image theorem. It would be
interesting to understand the precise relationship between the two
notions of projection.

A technical tool we use is that folding paths in Outer space project
to reparametrized quasi-geodesics in the splitting complex $\S$. In
\cite{S} Handel and Mosher use a special kind of folding paths
(maximal folds on maps with at least 3 gates at every vertex) that
does not suffice for our purposes. In the appendix we follow the same
outline as in \cite{S} but we use (liberal) folding
paths. Interestingly, the proof becomes simpler since we do not have
to arrange the 3 gates condition after every modification.

{\bf Acknowledgements.} We thank Ken Bromberg, Ursula Hamenst\"adt,
Ilya Kapovich and Patrick Reynolds for their interest and useful
comments.

\section{Review}\label{review}

In this section we recall basic definitions and facts about Outer
space with an emphasis on folding paths. For more information see
\cite{F}. 

A {\it graph} is a 1-dimensional CW complex. The {\it rose} in rank
$n$ is the graph $R_n$ with 1 vertex and $n$ edges. A finite connected
graph is a {\it core graph} if it has no valence 1 vertices. The {\it
  core} of a connected graph $\Gamma$ with finitely generated
$\pi_1(\Gamma)$ is the unique core graph $C\subset\Gamma$ so that
inclusion is a homotopy equivalence.

A {\it marking} of a graph $\Gamma$ is a homotopy equivalence
$f:R_n\to\Gamma$. A {\it metric} on a graph $\Gamma$ is a function
that assigns a positive number $\ell(e)$, called {\it length}, to each
edge $e$ of $\Gamma$. A metric allows one to view $\Gamma$ as a path
metric space. Culler-Vogtmann's {\it Outer space} $\X$ is the space of
equivalence classes of triples $(\Gamma,\ell,f)$ where $\Gamma$ is a
finite graph with no vertices of valence 1 or 2, $f$ is a marking, and
$\ell$ is a metric on $\Gamma$ with volume (i.e.\ sum of the lengths
of all edges) equal to 1. Two such triples $(\Gamma,\ell,f)$ and
$(\Gamma',\ell',f')$ are equivalent if there is an isometry
$\phi:\Gamma\to\Gamma'$ such that $\phi f\simeq f'$ (homotopic).

The group $Out(\FF_n)$ can be identified with the group of
homotopy equivalences on $R_n$. The action of $Out(\FF_n)$ on
$\X$ (on the right) is defined by changing the marking:
$[(\Gamma,\ell,f)]\cdot\Phi=[(\Gamma,\ell,f\Phi)]$.

We frequently simplify the notation and replace the equivalence class
$[(\Gamma,\ell,f)]$ by $\Gamma$.

The set $\mathcal C$ of nontrivial conjugacy classes in $\FF_n$ can be
identified with the set of homotopy classes of immersed loops in
$R_n$. If $\alpha\in\mathcal C$ and $[\Gamma,\ell,f)]\in\X$, we define
$\alpha|\Gamma$ to be the immersed loop homotopic to $f(\alpha)$, and
we let $\ell_\Gamma(\alpha)$ be the length of $\alpha|\Gamma$. We then
have $\ell_{\Gamma\Phi}(\alpha)=\ell_\Gamma(\Phi(\alpha))$. 

If $\Gamma,\Gamma'\in\X$ we say that a map $\phi:\Gamma\to\Gamma'$ is
a {\it difference of markings} if $\phi f\simeq f'$. We will normally
consider only maps that are linear (constant speed) on each
edge. Denote by $\sigma(\phi)$ the Lipschitz constant of $\phi$,
i.e. the maximum speed over all edges of $\Gamma$. The Lipschitz
distance is defined
by $$d(\Gamma,\Gamma')=\min_{\phi}\log\sigma(\phi)$$
where the minimum is taken over all difference of markings maps
$\phi:\Gamma\to\Gamma'$.

A {\it train track structure} on a graph $\Gamma$ consists of an
equivalence relation on the set of directions out of each vertex of
$\Gamma$. One usually pictures graphs with a train track structure so
that equivalent directions are tangent.  Equivalence classes are
called {\it gates}.  The tension graph $\Delta_\phi$ has a natural
train track structure, where two directions $d_1,d_2$ out of a vertex
$v\in\Delta_\phi$ are equivalent if they are mapped by $\phi$ to the
same direction out of $\phi(v)$ (which is not required to be a
vertex). We say $\phi$ is {\it optimal} if $\Delta_\phi=\Gamma$ and
there are at least two gates at every vertex of $\Gamma$.  A turn
(i.e. a pair of distinct directions out of a vertex) is {\it illegal}
if the directions are in the same gate, and otherwise it is {\it
  legal}. An immersed path or a loop is {\it legal} if it takes only legal
turns.

An optimal map $\phi:\Gamma\to\Gamma$ is a {\it train track map} if there
is a train track structure on $\Gamma$ which is preserved by $\phi$,
in the sense that legal paths are mapped to legal paths.

Now let $\phi:\Gamma\to\Gamma'$ be optimal. We define a {\it (greedy)
  folding path} induced by $\phi$ as follows. It is the unique (up to
reparametrization) path $G_t$, $t\in [\alpha,\omega]$ with
$G_\alpha=\Gamma$, $G_\omega=\Gamma'$, and for every $t\in
[\alpha,\omega)$ there is $\epsilon>0$ such that for every $\tau\in
  [t,t+\epsilon]$ the graph $G_\tau$ is obtained from $G_t$ by
  identifying segments of length $\tau-t$ leaving any vertex in
  equivalent directions and then rescaling to restore volume 1. 
We normally parametrize the path by arclength, i.e.\ so
  that for $t_1<t_2$ we have $d(G_{t_1},G_{t_2})=t_2-t_1$. 
There are
  induced optimal maps $G_{t_1}\to G_{t_2}$ for $t_1<t_2$ and they
  compose naturally. 

If $\phi:G\to G'$ is any difference of markings map there is an {\it
  induced} path from $G$ to $G'$ as follows. It is a concatenation of
a linear path within a simplex from $G\to G''$ and a folding
path from $G''$ to $G'$ induced by an optimal map (see \cite{francaviglia-martino}, also
\cite[Proposition~2.7]{F}).

Let $G_t$ be a folding path defined on a closed interval
$[\alpha,\omega]$. Then the interval can be subdivided into finitely
many subintervals $[t_i,t_{i+1}]$ so that for every $t\in
(t_i,t_{i+1})$ all illegal turns are at vertices with two gates, with
one gate consisting of a single direction. Moreover, all $G_t$ in this
interval are canonically homeomorphic with the homeomorphism
preserving the train track structure. As $t$ increases in this open
interval, $G_t$ moves along a straight line in the interior of a
simplex in $\X$, i.e.\ only the lengths of the edges change. See
Figure~\ref{no event}.

\begin{figure*}[h]
\centering
\includegraphics[scale=1]{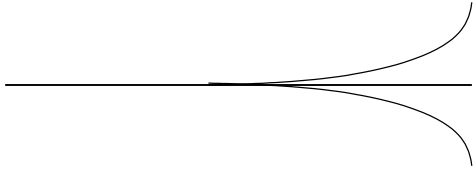}
\caption{Folding path between events}
\label{no event}
\end{figure*}

The proof of these facts follows from the analysis of folding paths in
\cite{F}. We understand what happens to the right of a given $G_t$, by
the definition of a folding path.  To understand what happens on an
interval $[t_0-\epsilon,t_0]$ we use the local description of
unfolding in \cite[Section 4]{F}.

Let $N$ be a small connected neighborhood of a vertex
$v$ in $G_{t_0}$. Thus $N$ is a wedge of arcs along their
endpoints, with $v$
the wedge point. For small $\epsilon>0$ the preimage $N_\epsilon\subset
G_{t_0-\epsilon}$ of $N$ is a tree that comes with a height function
and a collection of {\it widgets}. See Figure \ref{widgets}.

\begin{figure*}[h]
\centering
\includegraphics[scale=0.5]{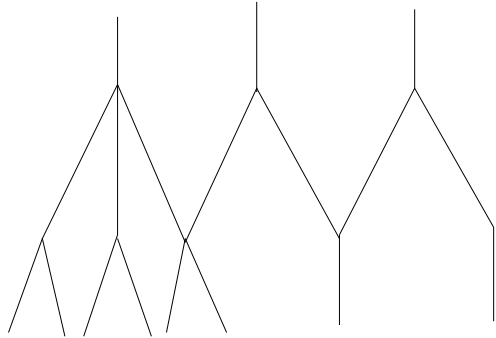}
\caption{The structure of $N_\epsilon$. This one has 3 widgets and 5
  vertices at height 0.}
\label{widgets}
\end{figure*}

The preimages of $v$ are at height 0. All illegal turns are at the
vertices at height $\epsilon$. Each of these vertices has two gates,
one gate consists of a single direction drawn upwards and the other
gate has $\geq 2$ directions all of which fold together at $t=t_0$. The
union of the edges in the downwards gate at each vertex at height
$\epsilon$ is a widget. The union of all widgets is a tree.

We also recall that there are coarse projection maps $\X\to\S\to\F$
defined by sending a graph $\Gamma\in\X$ to a 1-edge splitting of
$\FF_n$ obtained by collapsing all but one edge of $\Gamma$, and
sending a 1-edge splitting to a vertex stabilizer. A useful fact,
supplementing Theorems~\ref{bf} and \ref{hm2}, is that the image of a
folding path in $\X$ is a reparametrized quasi-geodesic in both $\F$
and $\S$, with uniform constants (see \cite{F} for $\F$ and the
appendix of this paper for $\S$).

\section{Folding path lemmas}

Recall that $A|G$ denotes the core of the covering space of $G$
corresponding to a subgroup $A$ of $\FF_n$. 
In this section we prove two technical lemmas about induced
paths. The setting in both is that
$G_t$ is a folding path in $\X$, $A|G_t$ is the induced path in Outer
space $\X(A)$ of $A$, and we are interested in identifying
geometrically the portions of the path $A|G_t$ where it (possibly)
makes progress in the splitting complex $\S(A)$ of $A$. 

\vskip 6pt \noindent{\it Convention.} For the rest of this paper we use {\it bounded} to mean {\it bounded by a function of $rank(\FF_n)$ alone.}\vskip 6pt

By a {\it topological ({\rm or} natural) edge} in a core graph we mean
an edge with respect to the cell structure with all vertices of
valence $\geq 3$, and likewise, vertices with valence $\geq 3$ are
{\it natural} (when the core graph is a circle we don't talk about
topological edges). An {\it interior illegal turn} is the turn (pair of
directions) at a valence two vertex with one gate. Such turns arise in
$A|G_t$.

\begin{lemma}\label{3 intervals}
Given a folding path $G_t$, $t\in [\alpha,\omega]$, and a f.g.\ subgroup $A<\FF_n$, the path
$A|G_t$ can be subdivided into 3 segments
$[\alpha,\beta),[\beta,\gamma),[\gamma,\omega]$ (with one or two
segments possibly omitted) so that:
\begin{enumerate}[(1)]
\item On the first segment $[\alpha,\beta)$ $A|G_t$ has a natural
  vertex with one gate or an interior illegal turn.
\item On the middle segment $[\beta,\gamma)$ $A|G_t$ has all edges of
  size $\leq 2$, hence it has bounded volume, all vertices have $\geq
  2$ gates and there are no interior illegal turns.
\item On the last segment $[\gamma,\omega]$ $A|G_t$ has a legal
  segment of length $\geq 2$ contained in a topological edge.
\end{enumerate}
Moreover, the image of the first and the last segment in the splitting
complex $\S(A)$ of $A$ is bounded.
\end{lemma}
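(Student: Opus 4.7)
The strategy is to define the subdivision via a sup/inf of the defining conditions, reduce the three-interval structure to two monotonicity claims, read off the middle segment properties by negation, and then use the quasi-geodesic projection of folding paths to bound the $\S(A)$-images of the first and last segments.

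Set $\beta := \sup\{t : A|G_t \text{ satisfies (1)}\}$ (with the convention $\beta = \alpha$ if the set is empty) and define $\gamma$ similarly as the infimum of times at which (3) holds, bumped up to $\beta$ if necessary so that $\beta \leq \gamma$. The subdivision is justified by two monotonicity claims: if (1) holds at $t_0$ then (1) holds at every $t \leq t_0$; if (3) holds at $t_0$ then (3) holds at every $t \geq t_0$. For the first, I would use the widget description from \cite{F} of $G_{t_0 - \epsilon}$ near a vertex --- unfolding replaces a neighborhood of a vertex by a tree with illegal turns concentrated at height $\epsilon$ --- and lift it to $A|G_{t_0 - \epsilon}$. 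A gate that has already collapsed to one at $t_0$ cannot be ``re-separated'' by going backward, so the preimage of the offending vertex still contains a one-gate natural vertex or a valence-two illegal turn inherited from the widget structure. For (3), folds identify equivalent directions and hence send legal paths to legal paths; since new topological vertices in the interior of a topological edge could only appear at newly created illegal turns (which folding does not introduce), a legal subsegment of length $\geq 2$ sitting inside a topological edge persists under all further folding.

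On $[\beta,\gamma)$ both defining conditions fail. Failure of (1) gives directly that every natural vertex has $\geq 2$ gates and there is no interior illegal turn. Failure of (3) says no topological edge contains a legal segment of length $\geq 2$; but topological edges are legal (no illegal turn in their interior, by the previous sentence), so each topological edge has length $< 2$. Since the number of topological edges of $A|G_t$ is bounded in terms of $\rank(A) \leq n$, the volume of $A|G_t$ is bounded, which gives (2).

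The main obstacle is the ``moreover'' clause. The projection $\X(A) \to \S(A)$ sends the induced folding path $A|G_t$ to a reparametrized quasi-geodesic, by the analogue in $\X(A)$ of the fact recalled at the end of Section \ref{review}. On the last segment, the legal segment of length $\geq 2$ inside a topological edge provides a stable, uncollapsed edge of $A|G_t$ that survives all subsequent folds of $G_t$; collapsing $A|G_t$ onto this edge then yields a 1-edge splitting that changes only by bounded amount as $t$ varies over $[\gamma,\omega]$. On the first segment, the defect supplied by (1) gives a canonical simplification --- contract the one-gate vertex or fold across the interior illegal turn --- whose 1-edge splittings lie within bounded distance in $\S(A)$ of those of $A|G_t$, and this simplification varies boundedly as $t$ decreases. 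The hardest step is executing this cleanly: combining the combinatorial persistence of the defect with the quasi-geodesic projection property to extract a genuine uniform diameter bound in $\S(A)$, rather than merely singling out a distinguished splitting at each time.
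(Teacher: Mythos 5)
Your skeleton matches the paper's: define the break points by the defining conditions, prove backward persistence of (1) via the unfolding/widget picture, prove forward persistence of (3), and get (2) by negation. But two of the steps you identify as the hard ones are genuinely missing, not just compressed. First, forward persistence of (3) does not follow from "folding preserves legality and creates no interior vertices": folding eats into a topological edge from its two natural endpoints, so in the unprojectivized, unit-speed picture a legal segment of length $L$ may have length only $L-2\tau$ at time $\tau$. The saving grace is that the total volume has dropped to at most $1-\tau$, so after rescaling the segment has length at least $(L-2\tau)/(1-\tau)\geq L$ \emph{precisely because} $L\geq 2$; this computation is where the constant $2$ comes from, and without it the segment could a priori shrink below $2$ and (3) could fail later. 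The same computation also gives the "moreover" clause for the last segment: the midpoint of the legal segment is never identified with any other point, so the one-edge splitting it determines is literally constant on $[\gamma,\omega]$.

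Second, the quasi-geodesic property of $A|G_t$ in $\S(A)$ cannot be the tool for the "moreover" clause: it is a \emph{consequence} of this lemma (Corollary \ref{quasi-geodesic}), it holds only on the middle segment where $A|G_t$ really is a folding path in $\X(A)$ (on the first segment the induced maps are not optimal, precisely because of the one-gate vertices), and in any case a reparametrized quasi-geodesic can have unbounded image, so the property yields no diameter bound. What is needed on the first segment is: the $k$ splittings obtained by collapsing all but one edge at the one-gate valence-$k$ vertex are mutually close; under backward unfolding they are \emph{unchanged} except at events where the valence strictly drops, at which they move a bounded amount; and the valence can drop only boundedly many times, so the total motion is bounded. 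Your mechanism "a gate cannot be re-separated going backward" is not right as stated --- unfolding separates directions all the time; the actual content is the essential/inessential widget analysis showing that the core of $A|G_{t_0-\epsilon}$ still contains a one-gate vertex of valence $\leq k$ with the same or boundedly close associated splittings. Relatedly, local backward persistence alone does not globalize to "(1) at $t_0$ implies (1) on all of $[\alpha,t_0]$"; you also need the forward statement that a fold cannot create a one-gate vertex, so that the set where (1) holds is an open initial segment.
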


\begin{proof}
Denote by $J$ the set of those $t_0\in [\alpha,\omega]$ such that
$A|G_{t_0}$ contains a vertex with one gate or an interior illegal
turn. We will first argue that $J$ is open in $[\alpha,\omega]$. It is
clear from the definition of folding paths that if $t_0\in J$ and
$t_0\neq\omega$ then $[t_0,t_0+\epsilon)\subset J$ for small
  $\epsilon$. Indeed, the effect of local folding at a valence $k$
  vertex of $A|G_{t_0}$ with one gate is that an edge with a vertex of
  valence 1 develops, and after removing it (which is required when
  passing to the core) a valence $k$ vertex with one gate remains.
  Assume now that $t_0\in J$ and $t_0\neq\alpha$. We will show that
  for sufficiently small $\epsilon>0$ we have
  $[t_0-\epsilon,t_0]\subset J$ and this will complete the proof that
  $J$ is open. The argument uses the description of unfolding in
  Section \ref{review} and will at the same time lay the groundwork
  for showing that on $J$ the path is not making progress in $\S(A)$. 

Let $v$ be a vertex of $A|G_{t_0}$ of valence $k$ and with one gate
  (so $k=2$ in the case of an interior illegal turn).
There are
  $k$ free splittings of $A|G_{t_0}$ obtained by collapsing all edges
  of $A|G_{t_0}$ except for one edge incident to $v$ (some of these
  splittings may coincide). These splittings are always uniformly
  close in $\S(A)$. We will further argue that either
\begin{enumerate}
\item [(A1)]
$A|G_{t_0-\epsilon}$ also has a vertex of
valence $k$ and one gate and the associated splittings are the same as
at time $t_0$, or
\item [(A2)] $A|G_{t_0-\epsilon}$ has a vertex of valence $<k$ and one gate and
  the associated splittings are uniformly close to the ones at time
  $t_0$ (this is of course possible only if $k>2$).
\end{enumerate}

Let $N$, $N_\epsilon$ be as in Section \ref{review}, i.e.\ $N$ is a
small neighborhood of the image of $v$ in $G_{t_0}$ and $N_\epsilon$
is the preimage of $N$ in $G_{t_0-\epsilon}$. Denote by $\tilde G_t$
the covering of $G_t$ corresponding to $A$; thus $A|G_t\subset \tilde
G_t$ is the core. Let $\tilde N$ be the lift of $N$ to $\tilde
G_{t_0}$ that contains $v$ and $\tilde N_\epsilon$ the preimage of
$\tilde N$ under the lifted folding map $\tilde
G_{t_0-\epsilon}\to\tilde G_{t_0}$. Since $\tilde N_\epsilon\to
N_\epsilon$ is a homeomorphism, we will talk about height, widgets,
etc in $\tilde N_\epsilon$. All edges in a widget in $\tilde
N_\epsilon$ map to the same small segment in $\tilde G_{t_0}$ with one
endpoint $v$. If this segment is in the core $A|G_{t_0}$ we will call
the widget {\it essential}, and otherwise it is {\it
  inessential}. Likewise, an edge below height 0 is essential if its
image is in $A|G_{t_0}$, otherwise it is inessential. Altogether,
there are precisely $k$ essential edges and widgets.  We now observe:
\begin{enumerate}
\item [(B1)]
If $x$ is a vertex of $\tilde N_\epsilon$ at height $\epsilon$ that belongs
to an inessential widget, then the edge that goes upwards from $x$
does not belong to $A|G_{t_0-\epsilon}$.
\item [(B2)] Essential widgets and essential edges are pairwise disjoint.
\item [(B3)]
If $z$ is a vertex of $\tilde N_\epsilon$ at height 0 that belongs to an
essential widget, then any edges that go down from $z$ do not belong
to $A|G_{t_0-\epsilon}$. 
\end{enumerate}
All of these statements are consequences of the facts that $v$ has only one gate
in $A|G_{t_0}$ and that the turns at $z$ and at $x$ involving the
upward edge are legal.

Essential edges below height 0 are the only ones that map to their
respective segments adjacent to $v$ so they are necessarily part of
the core $A|G_{t_0-\epsilon}$. For the same reason, at least one edge
in each essential widget belongs to $A|G_{t_0-\epsilon}$.  Denote by
$\tilde N'_\epsilon$ the hull of all essential edges and widgets, together
with edges above essential widgets. The rest of $\tilde N_\epsilon$ is not
part of the core (and even some of the edges in essential widgets
might be outside the core). We retain the terminology from
$\tilde N_\epsilon$, e.g. an inessential widget in $\tilde N'_\epsilon$ is the
intersection of an inessential widget in $\tilde N_\epsilon$ with
$\tilde N'_\epsilon$. But note that in $\tilde N'_\epsilon$:
\begin{enumerate}[(C1)]
\item
There are no edges above inessential
widgets. 
\item There are no edges below essential widgets.
\item
Each inessential widget contains at least 2 edges. 
\item
$\tilde N'_\epsilon$ must contain at least one inessential widget.
\item Each complementary component of the height $\epsilon$ vertex in
  each inessential widget contains at least one essential edge or
  essential widget. 
\item Every inessential widget has $\leq k$ edges.
\end{enumerate}

(C4) follows from the fact that essential widgets are pairwise
disjoint and it's impossible for $\tilde N'_\epsilon$ to consist just of a
single essential widget. (C5) follows from the construction of
$\tilde N'_\epsilon$, and (C6) follows from (C5).

If $\tilde N'_\epsilon$ has an inessential widget with $<k$ edges, we are
done (the associated splittings are clearly within bounded distance of
the original $k$ splittings). Suppose $W$ is an inessential widget in
$\tilde N'_\epsilon$ with exactly $k$ edges. We will argue that the $k$
splittings associated to this widget are exactly the same as the
original $k$ splittings. All other inessential
widgets in $\tilde N'_\epsilon$ have two edges (see (C5)). Thus each edge
of $W$ is attached to a (possibly empty) sequence of inessential
widgets with 2 edges and then either an essential edge or an essential
widget. The map $\tilde N'_\epsilon\to \tilde N$ factors through
$\tilde N''_\epsilon$
obtained from $\tilde N'_\epsilon$ by folding together all edges in each
essential widget. The folding map does not affect the associated
splittings of $A$. Now observe that $\tilde N''_\epsilon$ is a $k$-pronged star
(wedge of $k$ arcs along their endpoints) and the map $\tilde
N''_\epsilon\to \tilde N'_\epsilon$
amounts to partially folding together the $k$ prongs and this clearly
does not affect the splittings.

In Figures \ref{case1}, \ref{case2} and \ref{case3} we illustrate the
3 possibilities when $k=2$.

\begin{figure*}[h]
\centering
\includegraphics[scale=0.5]{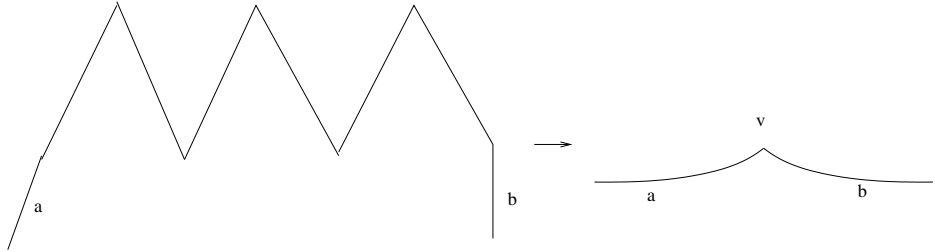}
\caption{This is $\tilde N'_\epsilon$ for the example in Figure \ref{widgets}
assuming there are two essential edges, the leftmost and the rightmost.}
\label{case1}
\end{figure*}

\begin{figure*}[h]
\centering
\includegraphics[scale=0.5]{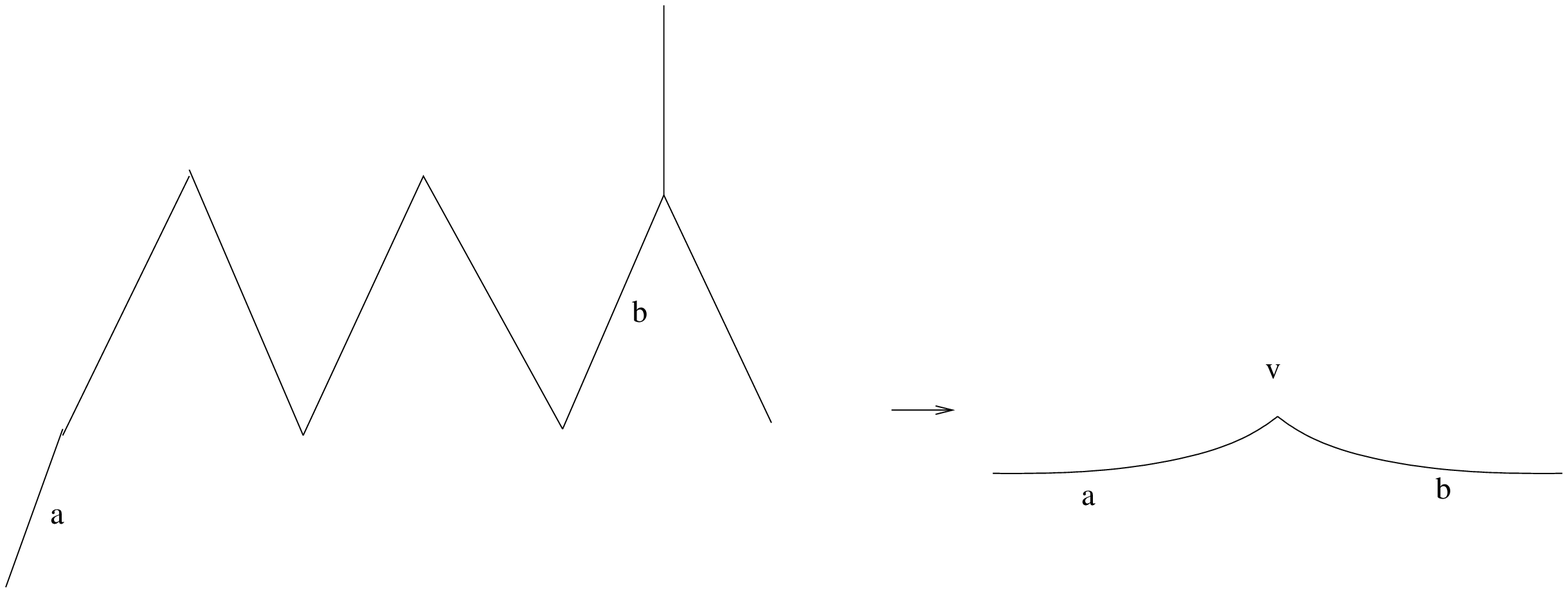}
\caption{This is $N'_\epsilon$ for the example in Figure \ref{widgets}
assuming that the leftmost edge and the rightmost widget are essential.}
\label{case2}
\end{figure*}

\begin{figure*}[h]
\centering
\includegraphics[scale=0.5]{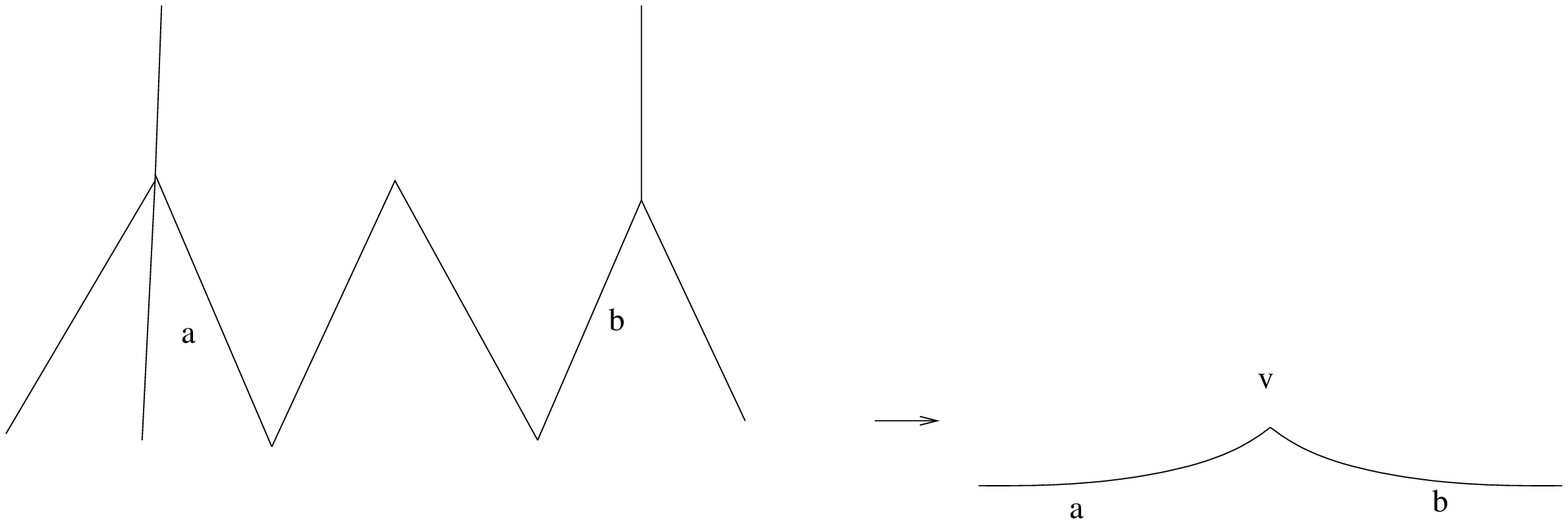}
\caption{This is $N'_\epsilon$ for the example in Figure \ref{widgets}
assuming that the leftmost and the rightmost widgets are essential.}
\label{case3}
\end{figure*}

We now know that $J$ is open. Next, we observe that $J$ must be an
initial segment of $[\alpha,\omega]$ (or empty). Indeed, otherwise we
would have $t_0\not\in J$, but $(t_0,t_0+\epsilon)\subset J$. However,
a local fold at $A|G_{t_0}$ cannot produce a vertex with one
gate. Similarly, splittings visible at $t_0\in J$ are also visible on
$[t_0,t_0+\epsilon]$.
The picture is that as $A|G_t$ unfolds
(with $t$ decreasing),
our valence $k$ vertex with one gate undergoes (perhaps
simultaneously) a sequence of changes
where it lowers the valence, but retains the feature that it has only
one gate. Between these events, the associated splittings do not change,
and at the events they change but only by a bounded distance.

Now (A1), (A2) and this discussion then
prove that we can take $[\alpha,\beta)=J$ and the image of $G_t$,
  $t\in J$ is bounded in
$\S(A)$.

To conclude the proof of Lemma \ref{3 intervals} we have to consider
the last segment. Suppose $A|G_{t_0}$ has a legal segment of length
$\geq 2$ in a topological edge. Consider the splitting of $A$ given by
collapsing all other topological edges of $A|G_{t_0}$. We must show
that for $t>t_0$ the graph $A|G_{t_0}$ has a topological edge that
contains a legal segment of length $\geq 2$ and that the corresponding
splitting coincides with the original.

For this argument it is convenient to work in the unprojectivized
Outer space and use the natural parametrization of the folding path,
which we rename $H_t$, so
that all folds have speed 1. We also take $G_{t_0}$ as the starting
point $H_0$. Thus after time $\tau$ the volume of $H_{\tau}$ is $\leq
1-\tau$ (equality holds if there is only one illegal turn the whole
way, otherwise the volume decreases faster). In particular,
$\tau<1$. The legal segment in $A|H_0$ of length $L\geq 2$ may lose a
piece of length $\tau$ at each end, so we have a legal segment of
length $\geq L-2\tau$ in a
topological edge in $A|H_\tau$. After rescaling by $\geq \frac
1{1-\tau}$ to bring $H_\tau$ to volume 1, the legal segment has length
$\geq \frac {L-2\tau}{1-\tau}\geq L$ when $L\geq 2$. Moreover, the
midpoint of the legal segment is never identified with any other
point, so the splitting it determines is independent of $\tau$.
\end{proof}

\begin{cor}\label{quasi-geodesic}
Let $G_t$ be a folding path in $\X$. For any f.g.\ subgroup $A$ of
$\FF_n$ the image of $A|G_t$ is a reparametrized quasi-geodesic in
$\S(A)$ with
uniform constants.
\end{cor}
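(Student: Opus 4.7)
The plan is to reduce to the fact, recalled in Section~\ref{review} and proved in the appendix, that every folding path in Outer space projects to a reparametrized quasi-geodesic in the splitting complex with constants depending only on rank. The reduction uses Lemma~\ref{3 intervals} to decompose the path into pieces of two types: \emph{non-progressing} first and last segments whose $\S(A)$-images are already known to be uniformly bounded, and a \emph{middle} segment on which $A|G_t$ behaves like an honest folding path in $\X(A)$, so that the projection theorem applies directly in $\X(A)$.

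First I would invoke Lemma~\ref{3 intervals} to split $[\alpha,\omega]$ as $[\alpha,\beta)\cup[\beta,\gamma)\cup[\gamma,\omega]$. The last sentence of that lemma says that the images of the first and last subintervals in $\S(A)$ are uniformly bounded, so these contribute only a uniform additive error to any quasi-geodesic estimate. The heart of the argument is the middle segment. On $[\beta,\gamma)$ every vertex of $A|G_t$ has at least two gates and there are no interior illegal turns, which means the folding taking place in $G_t$ lifts to the cover $\tilde G_t$ and restricts to a \emph{greedy} fold of the core $A|G_t$: the directions being identified at each vertex of $A|G_t$ belong to distinct gates, so nothing degenerate happens when passing to the subgraph. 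Moreover all natural edges of $A|G_t$ have length $\leq 2$, keeping the volume in a bounded range, so the Lipschitz parametrization of $A|G_t$ in $\X(A)$ differs from that of $G_t$ in $\X$ only by a multiplicative constant that depends on rank alone. Thus (up to a bi-Lipschitz reparametrization) $A|G_t|_{[\beta,\gamma)}$ is a folding path in the sense of Section~\ref{review}, and the projection result for folding paths, applied in rank at most $n$, gives that its image in $\S(A)$ is a reparametrized quasi-geodesic with uniform constants.

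Combining the two pieces, the full image of $A|G_t$ is obtained from a reparametrized quasi-geodesic in $\S(A)$ by prepending and appending a uniformly bounded set, which only worsens the quasi-geodesic constants by a uniform amount. The main obstacle is the middle step: one has to verify that the restriction of the global folding operation in $\tilde G_t$ to the core $A|G_t$ remains a folding path in $\X(A)$ across \emph{all} times in $[\beta,\gamma)$, including the discrete events where the combinatorial structure of the core changes. The hypotheses defining the middle segment are precisely what rule out the valence-reducing and vertex-collapsing phenomena analyzed in the proof of Lemma~\ref{3 intervals}, so the local description of unfolding there can be reused to confirm that the induced path stays inside the greedy folding framework and the appendix's projection theorem can be applied.
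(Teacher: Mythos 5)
Your proposal is correct and follows essentially the same route as the paper: by Lemma \ref{3 intervals} the first and last segments have uniformly bounded image in $\S(A)$, and on the middle segment $A|G_t$ (after rescaling to volume $1$ and reparametrizing) is itself a folding path in $\X(A)$, so the general fact that folding paths project to reparametrized quasi-geodesics in the splitting complex applies. The only cosmetic difference is that you insist the induced path is \emph{greedy}; this is not needed, since the appendix establishes the projection statement for all liberal folding paths.
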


\begin{proof}
Progress in $\S(A)$ is made only on the middle interval, and on this
interval $A|G_t$ is a folding path in Outer space of $A$ (after
rescaling to achieve volume 1, and reparametrizing the path so it is
unit speed). The statement then follows from the fact that
folding paths project to reparametrized quasi-geodesics.
\end{proof}

Let $G_t$ be a folding path. We say that a collection of subgraphs
$H_t\subset G_t$ is {\it forward invariant} if for every $t<t'$ the
folding map $G_t\to G_{t'}$ takes $H_t$ into $H_{t'}$.

We will denote by $\Theta_t$ the image of $A|G_t\to G_t$, by
$\Omega_t\subset G_t$ the set of points with at least two preimages in
$A|G_t$, and by $\tilde\Omega_t\subset A|G_t$ the preimage of
$\Omega_t$ in $A|G_t$.

From now on we restrict $G_t$ to the middle segment of Lemma \ref{3
  intervals}. 

\begin{prop} \label{details}
Assume $G_t$ consists only of the middle segment.
\begin{itemize}
\item $\Theta_t$ is a forward invariant core graph
  whose rank never decreases, and it may increase a bounded number of
  times,
\item both $\Omega_t$ and $\tilde\Omega_t$ are forward invariant and
  have a bounded number of components,
\item both $\Omega_t$ and $\tilde\Omega_t$ can change their homotopy type
  only a bounded number of times, each time by one of the following
  events (more than one event may occur at the same time):
\begin{itemize}
\item the rank of a component increases, or two components merge
into one,
\item an isolated point is created. In this case the rank of
  $\Theta_t$ 
increases.
\end{itemize}
\end{itemize}
\end{prop}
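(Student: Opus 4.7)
The plan is to work through the three bullets using the lifted folding map and the bounded combinatorics of the middle segment. For each $t<t'$ I would lift the folding map $\phi_{t,t'}:G_t\to G_{t'}$ to $\tilde\phi_{t,t'}:\tilde G_t\to\tilde G_{t'}$ between the $A$-covers, and denote by $p_t:A|G_t\to G_t$ the covering projection restricted to the core. Because on the middle segment every vertex of $A|G_t$ has $\geq 2$ gates and there are no interior illegal turns, $\tilde\phi_{t,t'}$ carries $A|G_t$ into $A|G_{t'}$ (no edge is pushed out of the core), so the resulting square commutes. Forward invariance of $\Theta_t=p_t(A|G_t)$ is then immediate. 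For $\Omega_t$ the key observation is that $\tilde\phi_{t,t'}$ only identifies points lying in a common gate at a common vertex; two distinct preimages in $A|G_t$ of a point $x\in G_t$ sit in different sheets of the cover over $x$ and are never identified, so their images remain distinct preimages of $\phi_{t,t'}(x)$. Forward invariance of $\Omega_t$ follows, and passing to $p_t^{-1}$ gives the same for $\tilde\Omega_t$.

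Next I would verify $\Theta_t$ is a core graph: any point of $\Theta_t$ lifts to a point of $A|G_t$ of local valence $\geq 2$, and $p_t$ is a local immersion, so $\Theta_t$ has no valence-1 vertex. The induced map $\Theta_t\to\Theta_{t'}$ is a restriction of a folding, hence $\pi_1$-surjective onto its image, giving $\rank\pi_1\Theta_t\leq\rank\pi_1\Theta_{t'}$; since this rank is bounded above by $n$, it increases boundedly often. For the component counts, $A|G_t$ has rank $\rank A\leq n$ and, by the middle-segment hypothesis, bounded volume with edges of length $\leq 2$, so $A|G_t$ has bounded combinatorial complexity; this forces the number of components of $\tilde\Omega_t$, and hence of $\Omega_t$, to be bounded in terms of $n$.

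For the homotopy-type bullet I would analyze a time $t_0$ at which the combinatorial type of $\Omega_t$ or $\tilde\Omega_t$ changes, using a local picture at the folding vertex analogous to the widget analysis in Lemma~\ref{3 intervals}. Between events, folding only rescales and propagates identifications within the existing $\tilde\Omega_t$, so the homotopy type is locally constant. At an event the fold either attaches a new arc within (or between) existing components of $\tilde\Omega_t$---raising the rank of a component, or merging two components---or it first brings into $\Omega_t$ a natural vertex of $G_t$ that previously had a unique preimage, producing a new isolated point. This last case is exactly when the map $A|G_t\to\Theta_t$ acquires a new identification at a natural vertex, so $\rank\pi_1\Theta_t$ increases by at least one.

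Finally I would count events by a budget argument. Rank increases of components contribute at most $b_1(A|G_t)=\rank A\leq n$ events total. Isolated-point creations are charged to the simultaneous rank increase of $\Theta_t$, bounded by $n$. Since each merger strictly decreases the component count while the only births are the bounded isolated-point creations, mergers are also bounded. I expect the main obstacle to be the local case analysis that shows the event dichotomy is exhaustive: one must rule out subtler simultaneous changes at a folding vertex, and this relies crucially on the middle-segment conditions (all illegal turns at natural vertices with $\geq 2$ gates, and no interior illegal turns) to reduce the possible local pictures to the three listed cases.
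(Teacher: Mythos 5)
Your overall strategy matches the paper's (forward invariance via the lifted folding maps, a local analysis at folding events, and a budget count via rank and component numbers), but there are three concrete problems. First, your justification that the rank of $\Theta_t$ never decreases is backwards: if $\Theta_t\to\Theta_{t'}$ were surjective and $\pi_1$-surjective, then $\pi_1\Theta_{t'}$ would be a \emph{quotient} of $\pi_1\Theta_t$, giving $\rank\Theta_{t'}\leq\rank\Theta_t$ --- the opposite inequality --- and in fact the map is not $\pi_1$-surjective in general (e.g.\ $\Theta_{t'}$ can be all of $G_{t'}$ while $\Theta_t$ is a proper subgraph of smaller rank). The correct reason is either local (at each event the components of $\Theta\cap N_\epsilon$ merge, so $\chi$ of the connected graph $\Theta_t$ can only drop, i.e.\ rank can only rise) or algebraic ($\pi_1\Theta_t$ is a free factor of $\pi_1G_t$ carried by the homotopy equivalence into the free factor $\pi_1\Theta_{t'}$, hence is a free factor of it). Second, ``bounded combinatorial complexity of $A|G_t$'' does not bound the number of components of $\tilde\Omega_t$: $\tilde\Omega_t$ is not a subcomplex with respect to the natural edges of $A|G_t$, and a single natural edge can a priori cross many natural vertices of $G_t$ and so meet $\tilde\Omega_t$ in many pieces. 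One needs the pigeonhole argument: frontier points of $\tilde\Omega_t$ in an edge $E$ map to boundedly many (vertex, direction) pairs of $G_t$, so too many complementary intervals of $E-\tilde\Omega_t$ would have overlapping images, forcing those overlaps into $\Omega_t$ and hence their preimages into $\tilde\Omega_t$, a contradiction.

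Third, and most seriously, the exhaustiveness of your event dichotomy for $\tilde\Omega_t$ --- which you flag as ``the main obstacle'' but do not carry out --- is the actual content of the proposition. The subtle point is that $\Omega_t$ can change (as a set) without changing its homotopy type, and one must show the same is then true upstairs for $\tilde\Omega_t$. The paper does this by comparing, for each vertex $\tilde v_k$ of $A|G_{t_i}$ over the folding vertex, the preimage $\tilde N_\epsilon(\tilde v_k)$ with $N_\epsilon$, and using that pairwise intersections of the images of these components lie in $\Omega_{t_i-\epsilon}\cap N_\epsilon$ to conclude that $\tilde\Omega_{t_i-\epsilon}$ meets each component in a nonempty connected set; this is exactly where the middle-segment hypothesis (widget structure at $\tilde v_k$, i.e.\ two gates everywhere) enters. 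Without this step your budget count has nothing to count against, so as written the proposal is an outline with the central verification missing.
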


\begin{proof}
Let $\beta=t_0<t_1<\cdots<t_M=\gamma$ be the subdivision of
$[\beta,\gamma]$ as in Section \ref{review} with the $t_i$'s
corresponding to the ``events'' in the folding path. First consider
the case
$t\in (t_i,t_{i+1})$.
The image $\Theta_t$ is a core graph and locally near a vertex
as in Figure \ref{no event} it could either be empty or it contains
the edge pictured to the left of the vertex as well as at least one
edge to the right. The quality of the image does not change as $t$
increases in $(t_i,t_{i+1})$. 

The subgraph $\Omega_t$ near the vertex above could be empty, or it
could contain the left edge and 0 or more right edges. Its quality
also does not change in the open interval $[t_i,t_{i+1})$. See Figure
\ref{omega}. 

\begin{figure*}[h]
\centering
\includegraphics[scale=1]{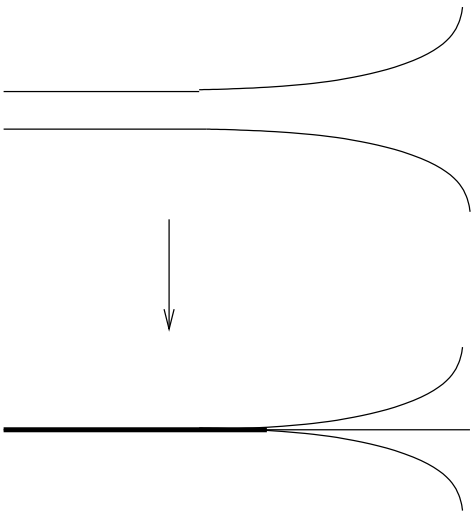}
\caption{$\Omega_t$ is locally just the left edge}
\label{omega}
\end{figure*}

Thus the homotopy type of $\Theta_t$ and of $\Omega_t$ is constant for
$t\in [t_i,t_{i+1})$ and can change only at the discrete times
  $t=t_i$, and the same is true for $\tilde\Omega_t$. Now let us
  analyze a small interval $[t_i-\epsilon,t_i]$ as in Section
  \ref{review}.  Take a small connected neighborhood $N$ of a vertex
  in $G_{t_i}$ and look at its preimage $N_\epsilon$ in
  $G_{t_i-\epsilon}$. It is a tree with the additional structure of a
  height function and widgets. Observe the following:
\begin{itemize}
\item If $\Theta_{t_i-\epsilon}\cap
  N_\epsilon=\emptyset$ then $\Theta_{t_i}\cap
  N=\emptyset$.
\item If $\Theta_{t_i-\epsilon}\cap
  N_\epsilon\neq\emptyset$, at $t=t_i$ all the components of this
  intersection merge
  together. So if the intersection is connected the image does not
  locally change the homotopy type at $t=t_i$ and otherwise the Euler
  characteristic decreases, and the change amounts to attaching 1-cells.
\item If $\Omega_{t_i-\epsilon}\cap N_\epsilon\neq\emptyset$, at
  $t=t_i$ all the components of the intersection merge together. So if
  the intersection is connected $\Omega_t$ does not locally change the
  homotopy type and otherwise the Euler characteristic decreases, and
  the change amounts to attaching 1-cells. 
\item If $\Omega_{t_i-\epsilon}\cap N_\epsilon=\emptyset$ but
  $\Theta_{t_i-\epsilon}\cap N_\epsilon\neq\emptyset$ there are two
  possibilities. If $\Theta_{t_i-\epsilon}\cap N_\epsilon$ is
  connected, then this intersection lifts homeomorphically to $A|G_t$
  (if two edges of $\Theta_{t_i-\epsilon}$ share a vertex which is not
  in $\Omega_{t_i-\epsilon}$, their unique lifts must also share a
  vertex). Thus in this case $\Omega_{t_i}\cap N=\emptyset$ as
  well. The other possibility is that $\Theta_{t_i-\epsilon}\cap
  N_\epsilon$ is disconnected. In that case the Euler characteristic
  of the image decreases at $t=t_i$ as noted above, but note that
  $\Omega_{t_i}\cap N$ is a single point, i.e. the Euler
  characteristic of $\Omega_t$ increases by 1 at $t=t_i$ (the lifted
  components cannot merge at $t=t_i$ as that would change the homotopy
  type of $A|G_{t}$ at $t=t_i$).
\end{itemize}

The set of $t=t_i$'s where $\chi(\Theta_t)$ decreases is uniformly
bounded, and this set contains all events when $\Omega_t$ gains an
isolated point. The set of additional times when $\chi(\Omega_t)$
decreases is also bounded, since $\Omega_t$ is a subgraph (with
respect to the natural $CW$ structure) of $G_t$ and has a uniformly
bounded number of components. Thus on the intervals between these
times the homotopy types of $\Theta_t$ and of $\Omega_t$ are
constant. We now claim that between these times the homotopy type of
$\tilde\Omega_t$ is also constant.

To prove the claim, we again analyze an interval $[t_i-\epsilon,t_i]$
as above. We may assume that $\Theta_{t_i-\epsilon}\cap N_\epsilon$
and $\Omega_{t_i-\epsilon}\cap N_\epsilon$ are both connected
(otherwise a change in the homotopy type of $\Theta_t$ or $\Omega_t$
occurs at $t=t_i$) and nonempty. In the analysis we will use the fact
that we are in the middle interval, so in particular every vertex of
$A|G_t$ has at least two gates. Every component of the preimage of $N$
(a small metric neighborhood of a vertex $v$ in $G_{t_i}$) is a metric
neighborhood of a point of $\tilde G_{t_i}$, the $A$-cover of
$G_{t_i}$. These components are metric neighborhoods of vertices, and
when intersected with $A|G_{t_i}$ become metric neighborhoods of
points in $A|G_{t_i}$ (or empty). Some of these intersections are
contained in the interior of a topological edge (we will view them as
degenerate); the others are small metric neighborhoods $\tilde
N(\tilde v_k)$ of vertices $\tilde v_k$ of $A|G_{t_i}$ that map to
$v$. We now have a commutative diagram
\[
\begin{matrix}
\tilde N_\epsilon(\tilde v_k)&\longrightarrow&\tilde N(\tilde v_k)\\
\downarrow&&\downarrow\\
N_\epsilon&\longrightarrow&N
\end{matrix}
\]
where $\tilde N_\epsilon(\tilde v_k)$ is the full preimage of $\tilde
N(\tilde v_k)$ in $A|G_{t_i-\epsilon}$, and it is also a component of
the preimage of $N_\epsilon$. Note also that $\tilde N_\epsilon(\tilde
v_k)$ carries the structure of widgets, since $\tilde v_k$ has at
least two gates, and that the left vertical arrow is an embedding. The
union of the images of the components $\tilde N_\epsilon(\tilde
v_k)$'s in $N_\epsilon$ (including the degenerate components) is
$\Theta_{t_i-\epsilon}\cap N_\epsilon$, and pairwise intersections of
the images of components are contained in $\Omega_{t_i-\epsilon}\cap
N_\epsilon$. It follows that the image of each component intersects
$\Omega_{t_i-\epsilon}\cap N_\epsilon$ in a connected set,
i.e. $\tilde\Omega_{t_i-\epsilon}$ intersected with each component of
the preimage of $N_\epsilon$ is nonempty and connected (so the
degenerate components are contained in
$\tilde\Omega_{t_i-\epsilon}$). Thus $\tilde\Omega_t$ does not change
its homotopy type in the limit.

Finally, we argue that $\tilde\Omega_t$ has a uniformly bounded number
of components. If not, some topological edge $E$ of $A|G_t$ must intersect
$\tilde\Omega_t$ in a large number of components. Each point in the
frontier of the intersection maps to one of boundedly many vertices in
$G_t$, and each vertex has bounded valence, so we find distinct
components in $E-\tilde\Omega_t$ with overlapping images,
contradicting the definition of $\tilde\Omega_t$.
\end{proof}

\section{Subfactor projections}

\subsection{Bounding distances in $\S$}

If $G\in\X$, we may collapse all edges of $G$ except for one and
obtain a splitting. Choosing different edges gives splittings at
distance 1 from each other, so we have an equivariant coarse
projection map
$\X\to\S$.

\begin{lemma}\label{distance}
Let $\phi:G\to G'$ be a difference of markings so that the preimage of
a point has cardinality $k$. Then $d_\S(G,G')$, i.e.\ the distance in
$\S$ between the projections of $G,G'$, is bounded by a linear
function of $k$. Moreover, if $G_t$ is the path induced by $\phi$ (see
Section~\ref{review}) then the image of the whole path in $\S$ is also
bounded by a linear function in $k$.
\end{lemma}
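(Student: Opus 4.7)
My plan is to reduce the bound on $d_\S(G,G')$ to a bound on the number of combinatorial events along the induced folding path, with each event contributing only a uniformly bounded amount to the $\S$-projection.

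First I would split the induced path from $G$ to $G'$ as in Section~\ref{review}: a linear leg $G\to G''$ inside a single simplex, followed by a folding path from $G''$ to $G'$ induced by an optimal map. The linear leg preserves the combinatorial type of the graph, so its projection to $\S$ is constant and contributes nothing, and I may restrict attention to the folding piece. Using the subdivision $\alpha=t_0<\cdots<t_M=\omega$ from Section~\ref{review}, on each interval $(t_i,t_{i+1})$ the graph $G_t$ moves along a straight line in a single open simplex of $\X$, so its $\S$-projection is constant there. Across an event $t_i$ the combinatorial change is a local fold involving only a bounded number of edges, since the edge count of $G_t$ is controlled by $\rank(\FF_n)$; hence the projections across an event are within a uniformly bounded distance $C_0$ of one another in $\S$.

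The key step is to bound $M$ linearly in $k$. I would track the preimage cardinality $k_t$ at a generic point of $G'$ under the induced optimal map $\phi_t\colon G_t\to G'$. Because $\phi$ factors as $\phi_t$ composed with the folding map $G\to G_t$, the function $k_t$ is non-increasing in $t$; moreover $k_0\le k$ and $k_\omega=1$. At each event $t_i$ some fold completion genuinely identifies a pair of preimages of a generic point of $G'$, so $k_t$ strictly decreases, giving $M\le k-1$. Combining with the per-event bound $C_0$ yields $d_\S(G,G')\le C_0(k-1)$, and the same argument bounds the entire image of the induced path in $\S$ by $C_0 k$.

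The main obstacle I expect is the verification that $k_t$ strictly decreases at every event. The widgets description in Section~\ref{review} makes each event local, and a fold completion at a vertex identifies initial segments of at least two edges into a single germ, reducing the preimage count at generic points near the image of the fold. Nevertheless, one must rule out degenerate events that merely reshuffle the train-track structure without identifying any preimages; if any such events occur I would replace $k_t$ by the richer complexity $k_t+|E(G_t)|$, which is still linear in $k$ and must strictly drop at every event because $|E(G_t)|$ is monotone non-increasing and bounded in terms of the rank, preserving the overall linear bound.
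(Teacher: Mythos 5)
There is a genuine gap: your entire bound rests on the claim that the number $M$ of combinatorial events along the folding path is at most linear in $k$, and this is false. The hypothesis only says that \emph{some} point $x'\in G'$ has preimage of cardinality $k$; other points can have arbitrarily large preimages, and an event of the folding path is typically a fold completing somewhere in the graph that identifies no two points of $\phi_t^{-1}(x')$ whatsoever, so $k_t$ does not drop there. Concretely, take $G=G'$ the rose on $a,b,c$ and $\phi$ fixing $c$ while sending $a,b$ to long words in $a,b$ (say a high power of a train track map on $\langle a,b\rangle$): a point $x'$ in the petal $c$ has $k=1$, yet the induced folding path has arbitrarily many events. Your fallback complexity $k_t+|E(G_t)|$ does not repair this, both because $|E(G_t)|$ need not decrease at an event (a fold whose shorter edge terminates in the interior of the longer one creates a new natural vertex, so the natural edge count is not monotone), and because in the example above there are many events at which neither quantity changes. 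A secondary slip: you assert $k_0\le k$ for a \emph{generic} point of $G'$, but only the specific point $x'$ from the hypothesis is controlled.

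The idea you are missing—and the one the paper's proof turns on—is that the projection to $\S$ is not merely "boundedly moving at each event'': it is \emph{constant} on each maximal subinterval where the cardinality of $\phi_t^{-1}(x')$ is constant. The mechanism is the $k=1$ observation: if a point $x_t\in G_t$ is the unique preimage of $x'$ (or of a point of $G_{t_1}$ at the end of the subinterval), then collapsing the complements of small neighborhoods of $x_t$ and $x'$ on which the map is a homeomorphism exhibits the \emph{same} one-edge splitting in both graphs. Since the cardinality of $\phi_t^{-1}(x')$ is non-increasing, starts at $k$, and ends at $1$, there are at most $k-1$ such subintervals, and one concludes by induction on $k$ (after perturbing $x'$ off the vertex set). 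This replaces your unbounded event count with a count of at most $k-1$ genuine drops, which is where the linear dependence on $k$ actually comes from.
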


\begin{proof}
$G_t$ is the concatenation of a linear path in a simplex and a folding
  path. We may assume that the linear path is trivial. By perturbing,
  we may also assume the point $x'\in G'$ is not a vertex (perturbing
  slightly can increase cardinality by at most a bounded factor), and
  that $\phi^{-1}(x')$ does not contain a vertex. If $k=1$ the two
  points $x'\in G'$ and $x=\phi^{-1}(x')$ define the same splitting
  (collapse the complements of small neighborhoods of $x,x'$ on which
  $\phi$ is a homeomorphism). The same holds $x_t$, the preimage of
  $x'$ in $G_t$. Otherwise, subdivide $G_t$ so that on the first
  portion the preimage consists of $k$ points, and on the second
  portion it consists of $<k$ points. Then the second portion has
  bounded image by induction, and on the first portion the case $k=1$
  applies.
\end{proof}

\begin{lemma}\label{distance 2}
Suppose $\phi:G\to G'$ is optimal and there are proper core graphs
$H\subset G$ and $H'\subset G'$ so that $\phi(H)\subset H'$ and
$\phi:H\to H'$ is a homotopy equivalence. If there is a point $x'\in
H'$ such that $\phi^{-1}(x')\subset H$ then $d_\S(G,G')$ is bounded.
\end{lemma}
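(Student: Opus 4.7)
The plan is to interpolate between $G$ and $G'$ with a carefully constructed intermediate graph $G^{**}$ and then apply the $k=1$ case of Lemma~\ref{distance} to both halves. The key feature we want $G^{**}$ to enjoy is that it should agree with $G$ outside $H$ and with $G'$ inside $H'$, so that the obstruction of having $|\phi^{-1}(x')|$ large disappears on each half of the interpolation.

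Construction of $G^{**}$: replace the subgraph $H\subset G$ by $H'$ via $\phi|_H$. Formally, $G^{**}$ is the graph obtained from $(G-H)\sqcup H'$ by gluing, for every vertex $v\in H$ that is also an endpoint of an edge of $G-H$, the $(G-H)$-side copy of $v$ to $\phi(v)\in H'$ (subdividing $H'$ at $\phi(v)$ if necessary). Since $\phi|_H\colon H\to H'$ is a homotopy equivalence, the natural map $\pi\colon G\to G^{**}$, which is the identity on $G-H$ and coincides with $\phi|_H$ on $H$, is itself a homotopy equivalence and carries the marking of $G$ to a marking of $G^{**}$. The map $\phi$ then factors as $\phi=\phi^{**}\circ\pi$, where $\phi^{**}\colon G^{**}\to G'$ is the identity on $H'\subset G^{**}$ and equals $\phi$ on $G^{**}-H'=G-H$.

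For $d_\S(G,G^{**})$: pick any edge $f\subset G-H$ (which exists because $H$ is proper) and let $y$ be a generic interior point of $f$. Because $\pi$ is the identity on $G-H$, we have $\pi^{-1}(y)=\{y\}$, so Lemma~\ref{distance} applied to $\pi$ at $y$ with $k=1$ gives $d_\S(G,G^{**})\le C$. For $d_\S(G^{**},G')$, we compute $(\phi^{**})^{-1}(x')$ by splitting according to $H'$ and $G-H$: on $H'$ the map is the identity, so the only preimage is $x'$; on $G-H$ we have $\phi^{**}=\phi$, and the hypothesis $\phi^{-1}(x')\subset H$ forbids any preimage in $G-H$. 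Hence $(\phi^{**})^{-1}(x')=\{x'\}$, and Lemma~\ref{distance} applied to $\phi^{**}$ at $x'$ with $k=1$ gives $d_\S(G^{**},G')\le C$. The triangle inequality yields $d_\S(G,G')\le 2C$.

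The hard part is bookkeeping around the construction of $G^{**}$: verifying that $G^{**}$ (possibly after absorbing the valence-two vertices introduced by subdividing $H'$ at the images $\phi(v)$) is a legitimate point of $\X$, and that $\pi$ and $\phi^{**}$ qualify as differences of markings in the precise sense Lemma~\ref{distance} needs, including linearity on edges. Both issues are forced by the hypotheses $\phi(H)\subset H'$ and $\phi|_H$ a homotopy equivalence, but they must be handled carefully so that the two applications of Lemma~\ref{distance} are truly to differences of markings with preimage cardinality one at the chosen generic points.
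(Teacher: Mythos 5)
Your argument is correct and is essentially the paper's proof in static form: the intermediate graph $G^{**}$ is exactly what the paper obtains by ``folding $H$ until $H=H'$,'' your application of Lemma~\ref{distance} with $k=1$ to $\pi$ at a point of $G-H$ is the paper's observation that the folding does not move the splittings coming from edges outside $H$, and the final step (preimage of $x'$ is a single point, apply Lemma~\ref{distance}) is identical.
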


\begin{proof}
Fold $H$ until $H=H'$ and $\phi$ is identity on $H$. This process does
not affect edges in the complement of $H$ so there is no movement in
$\S$. At the end $\phi^{-1}(x')$ consists of a single point. Now apply
Lemma \ref{distance}.
\end{proof}

\begin{remark}
A similar argument shows that if $\phi^{-1}(x')-H$ has $k$ points then
the distance in $\S$ between $G$ and $G'$ is bounded by a linear
function of $k$.
\end{remark}

\subsection{Restricting progress in $\S$ further}
Recall that a subgroup $A$ of $\FF_n$ is {\it malnormal} if
$gAg^{-1}\cap A\neq \{1\}$ implies $g\in A$.  For example, free
factors are malnormal. The next lemma shows that if $A$ is malnormal,
$A|G_t$ can be making progress only if $\Omega_t\neq G_t$ (see
Proposition~\ref{details}).

\begin{lemma}\label{uniform crossing}
Let $A$ be a malnormal f.g.\ subgroup of $\FF_n$.  In the setting of
Lemma \ref{3 intervals} assume that for some $t_0\in [\beta,\gamma)$
  (the middle interval) every edge $e$ of $G_{t_0}$ is covered by
  $A|G_{t_0}$ at least twice, i.e.\ $\Omega_{t_0}=G_{t_0}$. Then the
  image of $A|G_t$ for $t\in [t_0,\gamma)$ in $\S(A)$ is a uniformly
    bounded set, and so is the image of $G_t$ in $\S$ on the same
    interval.
\end{lemma}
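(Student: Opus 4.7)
The plan is to propagate the covering condition forward with Proposition \ref{details}, use malnormality to obtain combinatorial control on the cover $A|G_t\to G_t$, and feed this into Lemmas \ref{distance} and \ref{distance 2}.

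First, by forward invariance of $\Omega_t$ from Proposition \ref{details} together with the surjectivity of the folding maps $G_{t_0}\to G_t$, the hypothesis $\Omega_{t_0}=G_{t_0}$ propagates forward: $\Omega_t=G_t$ and $\tilde\Omega_t=A|G_t$ for every $t\in[t_0,\gamma]$. Throughout the interval, every edge of $G_t$ has at least two preimages in $A|G_t$.

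Next, I would use malnormality of $A$ to obtain a uniform pointwise bound on the multiplicity $n_e(t)$ of each edge $e$ of $G_t$ in the core $A|G_t$. The heuristic is that too many coincident lifts of a single edge would produce elements $g\in\FF_n\setminus A$ with $A\cap gAg^{-1}\neq 1$, contradicting malnormality. Once this bound is established, the difference of markings map $A|G_{t_0}\to A|G_\gamma$ has generic fibre of bounded size, and Lemma \ref{distance} applied inside $\X(A)$ bounds the image of $A|G_t$ in $\S(A)$. For the image of $G_t$ in $\S$, I would invoke Lemma \ref{distance 2} for the folding $\phi:G_{t_0}\to G_\gamma$: since $A|G_\gamma\to G_\gamma$ is at least a $2$-to-$1$ cover, removing one ``sheet'' isolates a proper core subgraph $H'\subset G_\gamma$ together with a corresponding $H\subset G_{t_0}$, and choosing $x'\in H'$ in a still-doubly-covered edge yields $\phi^{-1}(x')\subset H$.

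The principal obstacle is the second step: extracting a genuinely pointwise bound on $n_e(t)$ from malnormality. The middle-interval volume bound only controls the weighted sum $\sum_e n_e(t)\ell(e)$, not the maximum, since individual edges of $G_t$ can be arbitrarily short. Translating the trivial-intersection condition $A\cap gAg^{-1}=1$ for $g\notin A$ into a genuine pointwise bound on how many lifts of a single edge can coexist in $A|G_t$ is the technical crux, and is probably best carried out by analyzing the $A$-action on the Bass--Serre tree of $G_t$.
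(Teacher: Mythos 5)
Your reduction to bounding point-preimages and feeding them into Lemma \ref{distance} has the right general shape, but the step you flag as the crux is both unprovable as stated and, even if granted, insufficient. First, malnormality does not yield a pointwise bound on the multiplicities $n_e(t)$: a malnormal subgroup (even a cyclic one generated by a long word) can have its core wrap over a short edge of $G_t$ arbitrarily many times, and nothing in the middle-interval hypotheses rules this out. Second, and more importantly, a bound on $n_e(t)$ for all $t$ would still not control the fibres of the folding map $G_{t_0}\to G_t$ itself; those fibres are exactly what measures progress in $\S$ (and, composed with the cover, in $\S(A)$), and your proposal never bounds them. The appeal to Lemma \ref{distance 2} via ``removing one sheet'' of $A|G_\gamma\to G_\gamma$ is also not meaningful: that map is an immersion of a core graph, not a covering with sheets, so no proper core subgraph $H'$ is produced this way.

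The paper's argument runs differently. Fix $t\geq t_0$ and let $x$ be the midpoint of an edge of $G_t$ of length $\geq 1/(3n-3)$; the bounded volume of $A|G_t$ on the middle interval then bounds the preimage $P$ of $x$ in $A|G_t$. This is the only place the volume bound is used: it is applied at the later time $t$, at a deliberately chosen long edge, rather than to all edges at all times. Malnormality then enters to bound $Q$, the preimage of $x$ in $G_{t_0}$ under the folding map: for each edge $e$ of $G_{t_0}$ take two distinct lifts $u,v$ to $A|G_{t_0}$ (these exist by the hypothesis $\Omega_{t_0}=G_{t_0}$) and map $e\cap Q\to P\times P$ by recording the images in $A|G_t$ of the two lifts of each point. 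If two points of $e\cap Q$ had the same image, the segment between them would map to a loop in $G_t$ admitting two distinct loop lifts to $A|G_t$ based at distinct points of $P$, forcing $gAg^{-1}\cap A\neq 1$ for some $g\notin A$. Hence $|e\cap Q|\leq|P|^2$ and $Q$ is bounded. A separate $\pi_1$-argument shows that for each $q\in Q$ the fibre $R_q$ of $R\to Q$ injects into $P$ (where $R$ is the preimage of $x$ in $A|G_{t_0}$), so $|R|\leq|Q|\cdot|P|$ is bounded, and Lemma \ref{distance} applied to $x$ and to a point of $P$ gives both conclusions. If you want to salvage your outline, replace the pointwise multiplicity bound by this pairing argument.
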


\begin{proof}
Fix $t\geq t_0$ and let $x$ be the midpoint of an edge of $G_t$ of
length $\geq \frac 1{3n-3}$. 
Consider the commutative diagram
\[
\begin{array}{ccc}
A|G_{t_0}&\rightarrow&A|G_t\\
\downarrow&&\downarrow\\
G_{t_0}&\rightarrow&G_t
\end{array}
\]
\noindent
which exists because we are assuming $t_0$ is not in the first
interval $[\alpha,\beta)$, so that $A|G_{t_0}$ maps to the core
  $A|G_t$. Taking the preimage of $x$ in each of the 4 spaces yields
\[
\begin{array}{ccc}
R&\rightarrow&P\\
\downarrow&&\downarrow\\
Q&\rightarrow&x
\end{array}
\]
\noindent
Because we are assuming that $t$ is in the middle interval
$[\beta,\gamma)$ the cardinality of $P$ is uniformly bounded.
We will argue: 

{\it Claim 1.}
The cardinality of the preimage $R_q$ in $R$ of any $q\in Q$ is uniformly
bounded.

{\it Claim 2.}
The cardinality of the preimage $Q$ of $x$ in
$G_{t_0}$ is uniformly bounded.

The claims together imply that $R$ is uniformly bounded.  Lemma
\ref{distance} plus Claim 2 imply that the path from $G_{t_0}$ to
$G_t$ makes bounded progress in $\S$ and the boundedness of $R$
implies (by
applying Lemma \ref{distance} to any point in $P$, which is nonempty
by assumption) that the path from $A|G_{t_0}$ to $A|G_t$ makes bounded
progress in $\S(A)$.

{\it Proof of Claim 1.} From the diagram we have a map $R_q\to
P$. Since $P$ is bounded it suffices to argue that this map is
injective. Suppose $a,b\in R_q$ are distinct but map to the same
point $p\in P$. Choose an immersed path $\tau$ from $a$ to $b$ in
$A|G_{t_0}$. Since $(A|G_{t_0},a)\to (A|G_t,p)$ is an isomorphism in
$\pi_1$, there is a loop $\tau'$ in $A|G_{t_0}$ based at $a$ that maps
to a loop in $A|G_t$ homotopic to the image of $\tau$, rel $p$ and the
same holds in $G_t$ with basepoint $x$. Since $(G_{t_0},q)\to (G_t,x)$
is an isomorphism in $\pi_1$, the images of $\tau$ and $\tau'$ have to
be homotopic rel $q$. Since they are both immersed, they must be
equal, implying that $\tau=\tau'$, contradiction. Alternatively, argue
that the folding process does not identify distinct points of $R_q$.  

{\it Proof of Claim 2.}
Fix an edge $e$ of $G_{t_0}$. We will argue that $e$ intersects
$Q$ in a bounded set. Since the number of edges is bounded, this will
prove the claim.

Let $u,v$ be two distinct lifts of $e$ to $A|G_{t_0}$ (they exist by
assumption).  Define a function $\sigma:e\cap Q\to P^2$ as follows. If
$y\in e\cap Q$, let $y_u,y_v$ be the lifts of $y$ to $u,v$
respectively and let $p(y_u),p(y_v)$ be their images in $P$. Define
$\sigma(y)=(p(y_u),p(y_v))\in P^2$. Since $P$ is uniformly bounded, it
suffices to argue that $\sigma$ is injective.

Assume $\sigma(y)=\sigma(y')$ for $y\neq y'$. Let $I$ be the segment
of $e$ whose endpoints are $y$ and $y'$ and let $I_u,I_v$ be the lifts
of $I$ to $u,v$ respectively. Then the images of $I_u$ and $I_v$ are
loops in $A|G_t$ based at distinct points of $P$ (see the proof of
Claim 1) and they are lifts of the image of $I$. But this violates the
malnormality of $A$.
\end{proof}

\subsection{Subgroups adjoined in a graph}

We say that a f.g.\ subgroup $A<\FF_n$ is {\it adjoined} in a graph
$G\in\X$ if the map $A|G\to G$ is 1-1 over some edge of $G$. If we use
the notation from Proposition \ref{details} and denote by
$\Omega\subset G$ the subgraph over which $A|G\to G$ is at least 2-1
and by $\tilde\Omega$ its preimage in $A|G$, then the condition
amounts to saying $\tilde\Omega\neq A|G$.  In other words, $G$ is then
obtained from some base graph $K$ (equal to $\Omega$ if $A|G\to G$ is
onto) by gluing $A|G$ along a proper subgraph $\tilde\Omega\subset
A|G$ via an immersion $f:\tilde\Omega\to K$, and the cardinality of
$f^{-1}(f(x))$ is $\ge 2$ for all $x\in K$.

We say that $A$ is {\it nearly embedded} in $G$ if $\tilde\Omega$ is a
forest, and it is {\it pinched} if $\tilde\Omega$ is a finite set. We
say $A$ is {\it embedded} in $G$ if $A|G\to G$ is 1-1
(i.e. $\tilde\Omega=\emptyset$). 

\begin{lemma}\label{near embedding}
If $A$ is nearly embedded in $G$ then $A$ is a free factor.
\end{lemma}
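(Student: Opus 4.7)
The plan is to build an auxiliary graph $\hat G$ that is homotopy equivalent to $G$ and contains $A|G$ as an honest subgraph; then the classical fact that the fundamental group of any subgraph is a free factor of the fundamental group of the ambient graph (extend a spanning tree of the subgraph to a spanning tree of the whole) immediately gives that $A$ is a free factor of $\FF_n$.

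Recall $G$ is the pushout of the immersion $f\colon\tilde\Omega\to K$ and the inclusion $\tilde\Omega\hookrightarrow A|G$. Let $T_1,\dots,T_c$ be the components of the forest $\tilde\Omega$, pick a single point $v_i\in T_i$, and set $V=\{v_1,\dots,v_c\}\subset\tilde\Omega$. I would define $\hat G$ to be the analogous pushout obtained by replacing $\tilde\Omega$ with $V$: take $K$ and $A|G$ disjointly and identify only each $v_i$ with $f(v_i)\in K$, making no further identifications along $\tilde\Omega$. By construction $A|G$ sits inside $\hat G$ as an actual subgraph.

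The main step is to verify that the natural quotient $q\colon\hat G\to G$ (which further identifies $x\sim f(x)$ for every $x\in\tilde\Omega\setminus V$) is a homotopy equivalence. This is where the forest hypothesis is used: because $T_i$ is a tree, $\{v_i\}\hookrightarrow T_i$ is a deformation retract, so $V\hookrightarrow\tilde\Omega$ is a homotopy equivalence of CW complexes. Both $V\hookrightarrow A|G$ and $\tilde\Omega\hookrightarrow A|G$ are subcomplex inclusions, hence cofibrations, so the standard homotopy-invariance theorem for pushouts along a cofibration applies and yields that $q$ is a homotopy equivalence. Equivalently, component by component one can deformation retract each $T_i\subset A|G\subset\hat G$ onto its chosen point $v_i$; because $T_i$ is attached to the rest of $\hat G$ only at $v_i$, these retractions assemble into a deformation retraction of $\hat G$ onto a graph canonically identified with $G$.

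The induced isomorphism $q_*\colon\pi_1(\hat G)\xrightarrow{\cong}\pi_1(G)=\FF_n$ sends the subgroup $\pi_1(A|G\subset\hat G)$ to the image of $\pi_1(A|G)$ under the immersion $A|G\to G$, which is by definition $A$. Combined with the subgraph observation, this exhibits $A$ as a free factor of $\FF_n$. The principal obstacle I expect is articulating the homotopy equivalence cleanly; the forest hypothesis enters there and only there, via the retractions $T_i\to\{v_i\}$.
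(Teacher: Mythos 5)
Your central step---that $q\colon\hat G\to G$ is a homotopy equivalence---is correct via the gluing lemma for pushouts along cofibrations, and this is a genuinely different route from the paper's, which instead wedges $A|G$ to the complement of the edges $p(E)$ at a single point and verifies the homotopy equivalence by Stallings folds. However, your concluding step has a real gap: it is not true in general that $A|G$ sits inside $\hat G$ as a subgraph. In $\hat G$ the points $v_i$ and $v_j$ get identified with each other whenever $f(v_i)=f(v_j)$, and this cannot always be avoided by a cleverer choice of the $v_i$. Indeed, in the pinched case $\tilde\Omega$ is a finite set, so $V=\tilde\Omega$, and by the definition of $\Omega$ every point of $\Omega$ has at least two preimages in $\tilde\Omega$, forcing $f(v_i)=f(v_j)$ for some $i\neq j$. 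Concretely, for the paper's example $\FF_3=\langle a,b,c\rangle$, $A=\langle a,cbc^{-1}\rangle$, $G$ the rose, $\tilde\Omega$ consists of the two vertices of $A|G$, both mapping to the wedge point; your $\hat G$ is just $G$ itself, the ``subgraph'' is all of $G$, and its fundamental group is $\FF_3$, not $A$. The argument is repairable---the image of $A|G$ in $\hat G$ is $A|G$ with finitely many vertices identified, identifying vertices of a graph only wedges on circles, so $\pi_1(A|G)$ is a free factor of the fundamental group of the image, which in turn is a free factor of $\pi_1(\hat G)$; alternatively, glue $v_i$ to $f(v_i)$ by an arc (a mapping cylinder) so that $A|G$ genuinely embeds---but as written the proof establishes the wrong statement exactly in the pinched case.

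Separately, your ``equivalent'' reformulation via deformation retractions is incorrect. A component $T_i$ of $\tilde\Omega$ is attached to the rest of $\hat G$ not only at $v_i$ but also at every vertex of $T_i$ where an edge of $A|G\setminus\tilde\Omega$ is attached, and such attachments must exist whenever $T_i$ is not a single point, because $A|G$ is a core graph. Collapsing each $T_i$ to $v_i$ therefore does not yield $G$: it would attach all the edges of $A|G\setminus\tilde\Omega$ incident to $T_i$ at the single point $f(v_i)$, whereas in $G$ they are attached at the various points $f(x)$ for $x$ ranging over the frontier of $T_i$. This misstatement does not affect the gluing-lemma argument, which stands on its own, but it should be removed rather than offered as an equivalent proof.
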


\begin{proof}
Let $p:A|G\to G$ be the canonical immersion. Enlarge the forest
$\tilde\Omega$ to a maximal tree $T$ for $A|G$ and let $E$ be the set
of edges of $A|G$ not in $T$. Since edges of $E$ aren't in
$\tilde\Omega$, $E$ is in a 1-1 correspondence with the set of edges
$p(E)$ in $G$. Let $x\in T$ and consider performing Stallings folds to
the morphism $$(T\cup E)\vee_{x=p(x)} (G-p(E))\to G$$ Folding the tree
$T$ into $G-p(E)$ doesn't affect the homotopy type and when finished
the resulting map is 1-1 and onto. In particular, the morphism is a
homotopy equivalence. Since $T\cup E$ represents $A$, the lemma
follows.
\end{proof}

\begin{prop}\label{analysis}
For every folding path $G_t$ there is an interval $[\beta,\tau)$
contained in the middle interval $[\beta,\gamma)$ (see Lemma~\ref{3
    intervals}) so that:
\begin{itemize}
\item $A$ is adjoined in $G_t$ for all $t\in [\beta,\tau)$, and
\item both complementary components of $[\beta,\tau)$ in
  $[\alpha,\omega]$ have uniformly
  bounded projections in $\S(A)$.
\end{itemize}
\end{prop}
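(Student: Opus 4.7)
The plan is to set $\tau$ to be the infimum of times $t\in[\beta,\gamma]$ for which $\Omega_t=G_t$, with $\tau=\gamma$ if no such $t$ exists. The choice is motivated by Lemma \ref{uniform crossing}, which immediately bounds the projection to $\S(A)$ once $\Omega_t=G_t$. First I would verify that the set $\{t\in[\beta,\gamma]:\Omega_t=G_t\}$ is upward closed: if $\Omega_t=G_t$, then forward invariance of $\Omega_t$ (Proposition \ref{details}) together with surjectivity of the folding map $G_t\to G_{t'}$ forces $\Omega_{t'}=G_{t'}$ for every $t'\geq t$. Since the homotopy types of $\Omega_t$ and $G_t$ are piecewise constant between the finitely many events in the middle interval, this upward closed set is closed on the left; that is, equal to $[\tau,\gamma]$, so in particular $\Omega_\tau=G_\tau$ whenever $\tau<\gamma$.

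The main technical step is to show that $A$ is adjoined at every $t\in[\beta,\tau)$. Suppose to the contrary that $A$ is not adjoined at some such $t$, so $\tilde\Omega_t=A|G_t$, equivalently $\Theta_t=\Omega_t$. By the definition of $\tau$, $\Omega_t\neq G_t$, so $\Theta_t\subsetneq G_t$ is a proper subgraph. The surjective immersion $A|G_t\to\Theta_t$ of finite connected graphs is then a finite covering, and the ``not adjoined'' assumption forces every fibre to have cardinality at least $2$. This exhibits $A$ as a proper finite-index subgroup of $B:=\pi_1(\Theta_t)\leq\FF_n$. However, $A$ is a free factor of $\FF_n$, hence malnormal, so for every $b\in B\setminus A$ we have $bAb^{-1}\cap A=1$. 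Consequently the normal core $\bigcap_{b\in B}bAb^{-1}$ is trivial, and $B$ acts faithfully on the finite coset space $B/A$. Therefore $B$ embeds in a finite symmetric group, which forces $B=1$ in $\FF_n$, contradicting $1\neq A\subsetneq B$.

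For the complementary intervals, the image of $[\alpha,\beta)$ in $\S(A)$ is uniformly bounded by Lemma \ref{3 intervals}, and likewise for $[\gamma,\omega]$. When $\tau<\gamma$, the first paragraph gives $\Omega_\tau=G_\tau$, so Lemma \ref{uniform crossing} applied at $t_0=\tau$ bounds the image of $[\tau,\gamma)$; concatenating with the $[\gamma,\omega]$ bound yields the desired uniform bound on $[\tau,\omega]$. If $\tau=\gamma$, the $[\gamma,\omega]$ bound already covers $[\tau,\omega]$.

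The principal obstacle is the malnormality argument used in the second paragraph: one must rule out the scenario in which $A|G_t$ finitely covers a proper core subgraph $\Theta_t\subsetneq G_t$ without $\Omega_t$ filling $G_t$. Once this subgroup-theoretic point is handled, the definition of $\tau$ aligns the two a priori distinct thresholds ``$A$ first fails to be adjoined'' and ``Lemma \ref{uniform crossing} first applies,'' and the remainder of the proof is a routine assembly of Lemma \ref{3 intervals} and Lemma \ref{uniform crossing}.
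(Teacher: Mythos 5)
The argument breaks at the sentence ``The surjective immersion $A|G_t\to\Theta_t$ of finite connected graphs is then a finite covering.'' An immersion is only locally injective, not locally surjective, and a surjective immersion of finite core graphs that is at least $2$-to-$1$ over every point need not be a covering: a vertex of $A|G_t$ may simply be missing some of the directions present at its image, with the fibres kept large by other vertices. Concretely, let $\Theta$ be the rose on $a,b$ and let $X$ have vertices $x_1,x_2,x_3$, an $a$-loop at each $x_i$, and $b$-edges $x_1\to x_2\to x_3$; then $X\to\Theta$ is a surjective immersion of core graphs with every fibre of cardinality $2$ or $3$, but it is not a covering (the direction $\bar b$ is missing at $x_1$ and $b$ at $x_3$). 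So you cannot conclude that $A$ has finite index in $\pi_1(\Theta_t)$, and the configuration you are trying to exclude --- $\tilde\Omega_t=A|G_t$ while $\Omega_t=\Theta_t\subsetneq G_t$ --- genuinely occurs. This also undermines your choice of $\tau$: defining it as the first time $\Omega_t=G_t$, the first bullet can fail, since $A$ may stop being adjoined strictly before $\Omega_t$ fills up $G_t$. (Your remaining steps --- upward closedness of $\{\Omega_t=G_t\}$, the conditional malnormality/normal-core argument, and the assembly via Lemmas \ref{3 intervals} and \ref{uniform crossing} --- are fine as far as they go.)

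The paper instead defines $[\beta,\tau)$ directly as the set of times at which $A$ is adjoined; this is an initial interval by forward invariance of $\tilde\Omega_t$ (Proposition \ref{details}), so the first bullet is automatic. On $[\tau,\gamma)$ one has $\tilde\Omega_t=A|G_t$ and there are two cases: if $A|G_t\to G_t$ is onto, then $\Omega_t=G_t$ and Lemma \ref{uniform crossing} applies, as in your third paragraph; if not, one restricts to the image $\Theta_t$, which after subdividing into boundedly many subpaths of constant rank and rescaling to volume $1$ is itself a folding path with $A|\Theta_t\to\Theta_t$ surjective and everywhere at least $2$-to-$1$, and one reruns the argument of Lemma \ref{uniform crossing} there (noting that $A$ remains malnormal in $\pi_1(\Theta_t)$). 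It is precisely this non-surjective case that your proof is missing.
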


Thus informally, a folding path can make progress in $\S(A)$ only
while $A$ is adjoined in the middle interval.

\begin{proof}
First, by Lemma~\ref{3 intervals} the path makes no progress outside
the middle interval $[\beta,\gamma)$. Let $[\beta,\tau)$ be the
    interval inside $[\beta,\gamma)$ consisting of $t\in
      [\beta,\gamma)$ such that $A$ is adjoined in $G_t$. This is an
        initial interval (possibly empty) by the forward invariance of
        $\tilde\Omega_t$, see Proposition~\ref{details}. It remains to
        argue that on $[\tau,\gamma)$ the path makes bounded progress
          in $\S(A)$. On this interval we have $\tilde\Omega_t=A|G_t$,
          so if $A|G_t\to G_t$ is onto the claim follows from
          Lemma~\ref{uniform crossing}.

Otherwise, the image is a core graph $\Theta_t$ and we may assume it
has constant rank by subdividing into a bounded number of subpaths and
working with each subpath separately. The volume of $\Theta_t$ is
$\leq 1$;
after rescaling so that $vol(\Theta_t)=1$ and reparametrizing the path
so that folds occur at speed 1, we have a folding
path with $A|\Theta_t\to \Theta_t$ surjective, so we apply the same
argument. 
\end{proof}

\begin{notation}\label{notation}
If $A$ is adjoined in $G$, we sometimes use the following
notation. The immersion $A|G\to G$ is denoted $p$. Enlarge a maximal
forest for $\tilde\Omega$ to a maximal tree $T$ for $A|G$. The set of
edges of $A|G-T$ is the disjoint union of a set $E_{\tilde\Omega}$ of
edges in $\tilde\Omega$ and a set $E$ of edges not in
$\tilde\Omega$. We have that $\<T\cup E\>*\<T\cup
E_{\tilde\Omega}\>=A$. Further, $T\cup E_{\tilde\Omega}$ maps to
$G-p(E)$. Let $C$ denote a free factor of $\<G-p(E)\>$ complementary
to $\<T\cup E_{\tilde\Omega}\>$. There is a morphism $(T\cup E)\vee
(T\cup E_{\tilde\Omega})\vee G_C\to G$ where $G_C$ represents $C$.
\end{notation}

\begin{prop}\label{adjoined}
Assume $n=rk(\FF_n)\geq 3$.
If $G$ is a graph such that $B|G$ is embedded and a free factor $A$ is
adjoined, then $d_\F(A,B)\leq 4$.
\end{prop}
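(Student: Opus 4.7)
The plan is to exhibit a chain of nested free factors of length at most $4$ from $A$ to $B$ in $\F$. By Lemma~\ref{near embedding}, $B$ is a proper nontrivial free factor since $B|G$ is embedded (hence nearly embedded). Notation~\ref{notation} supplies a morphism $(T\cup E)\vee(T\cup E_{\tilde\Omega})\vee G_C\to G$; a direct rank count---$\pi_1(G-p(E))=A_2*C$ has rank $n-|E|$, and adjoining the $|E|$ loops through $E$ produces rank $n=\rank(\FF_n)$---shows it is a $\pi_1$-isomorphism. Hence $\FF_n=A*C$ with $C$ a proper nontrivial free factor. Since $n\ge 3$, the standard chain $A\supsetneq\langle a\rangle\subsetneq\langle a\rangle *C\supsetneq C$ through a primitive $a\in A$ (with the symmetric version when $\rank(A)=1$ so $\rank(C)\ge 2$) gives $d_\F(A,C)\le 3$.

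It then suffices to connect $B$ to this chain with at most one more step. I would analyze the action of $B$ on the Bass--Serre tree $T$ of $\FF_n=A*C$. If $B$ fixes a vertex of $T$, then $B$ is conjugate into $A$ (so $d_\F(A,B)\le 1$) or into $C$ (so $d_\F(B,C)\le 1$ and $d_\F(A,B)\le 4$). If $B$ has no global fixed vertex but has a nontrivial vertex stabilizer $B_v$, then $B_v$ is a proper free factor of $B$ lying in a conjugate of $A$ or of $C$; in either sub-case a chain of length at most $3$ through $B_v$ (using $B_v*A$ as an auxiliary proper free factor when $B_v\subset C$, proper since $\rank(B_v)+\rank(A)<\rank(C)+\rank(A)=n$) produces $d_\F(A,B)\le 3$.

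The subtlest case is when $B$ acts freely on $T$. Here I exploit the geometry: take a primitive $\alpha\in A$ realized by a loop in $A|G$ through the uniquely lifted edge $e_0^A$, and a primitive $\beta\in B$ given by an edge loop in the embedded subgraph $B|G\subset G$ through some edge $e_B$. Since $A|G\to G$ is $1$-$1$ over $e_0$ and $B|G$ is embedded, one can choose a spanning tree of $G$ in which both $e_0$ and $e_B$ are non-tree edges; a brief Nielsen-type computation then shows $\langle\alpha,\beta\rangle$ is a proper rank-$2$ free factor of $\FF_n$, yielding the chain $A\supset\langle\alpha\rangle\subset\langle\alpha,\beta\rangle\supset\langle\beta\rangle\subset B$ of length~$4$. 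The main obstacle is precisely this verification, since a generic pair of primitives (one in $A$, one in $B$) need not span a free factor of $\FF_n$; the embedded-plus-adjoined hypothesis is what rules out the generic failure. Degenerate configurations---bridges in $G$, the coincidence $e_0=e_B$, and rank-$1$ factors---are handled by variations producing shorter chains, and the slack in the bound $4$ (versus $3$) is what accommodates them.
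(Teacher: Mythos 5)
Your overall route---split $\FF_n=A*C$ using the graph $G$, then locate $B$ relative to this splitting via its action on the Bass--Serre tree---is genuinely different from the paper's direct case analysis, but it has two real gaps. First, the opening claim $\FF_n=A*C$ is not established by the rank count. The count presupposes that $\<T\cup E_{\tilde\Omega}\>$ sits inside $\<G-p(E)\>$ as a free factor of rank $|E_{\tilde\Omega}|$ with an honest complement $C$, which is exactly what would need proving: when $\tilde\Omega$ is a forest this is Lemma~\ref{near embedding}, but when $E_{\tilde\Omega}\neq\emptyset$ the subgraph $T\cup E_{\tilde\Omega}$ only immerses into $G-p(E)$, and an immersed loop need not generate a free factor of the target. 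The paper's Case 1 exists precisely to sidestep this: when $E_{\tilde\Omega}\neq\emptyset$ it never asserts $\FF_n=A*C$ and instead builds the chain $A\searrow\<T\cup E_{\tilde\Omega}\>\nearrow\<G-p(E)\>\searrow\<\beta\>\nearrow B$ (or its variant), which uses only that $\<T\cup E_{\tilde\Omega}\>$ is a free factor of $A$ carried by $G-p(E)$.

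Second, and more seriously, the case you yourself flag as the main obstacle---$B$ meeting every conjugate of $A$ and of $C$ trivially---is the heart of the proposition and is left unproved. Choosing a spanning tree of $G$ in which $e_0$ and $e_B$ are both non-tree edges does not make $\<\alpha,\beta\>$ a free factor: the loop $\alpha$ may traverse $e_B$ (and other non-tree edges) many times and $\beta$ may traverse $e_0$, so the change of generators is not triangular and the ``Nielsen-type computation'' has no reason to close up. What does work is the stronger requirement that some embedded loop $\beta$ of $B|G$ avoid $e_0=p(e)$ entirely: since $\alpha$ can be chosen to cross $e_0$ exactly once, $\FF_n=\<\alpha\>*\<G-e_0\>$, and $\<\beta\>$ is then a free factor of $\<G-e_0\>$, giving $A\supset\<\alpha\>\subset\<\alpha\>*\<\beta\>\supset\<\beta\>\subset B$. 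Whether such a $\beta$ exists is precisely the dichotomy of the paper's Cases 4 and 5: if every embedded loop of $B|G$ meets every edge of $p(E)$, then $\rank B=1$, your construction fails outright, and the paper has to produce a different chain through $\<T\cup e'\>$ and $\<(T\cup e')\vee B\>$. So the ``degenerate configurations'' you defer include one that is not degenerate at all but demands a separate argument.
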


The proof is organized into several cases. One could give a quick
argument if 4 is replaced by a larger number.

\begin{proof}
Fix notation as above.

{\it Case 1: $E_{\tilde\Omega}\not=\emptyset$.}
There are two cases. If there is an embedded loop $\beta$ of
$B|G\subset G$ contained in $G- p(E)$ then we have
$$A\searrow \< T\cup E_{\tilde\Omega} \>\nearrow\< G-p(E)\>\searrow\<
\beta\>\nearrow B$$

Otherwise, choose $\beta$ in $B|G\subset G$. By our assumption,
$\beta$ is not contained in $G-p(E)$. We have
$$A\searrow \< T\cup E_{\tilde\Omega}\> \nearrow \< T \cup
E_{\tilde\Omega}\cup \beta\> =\<T\cup
E_{\tilde\Omega}\>*\<\beta\>\searrow \< \beta\> \nearrow B$$ where by
$\< T \cup E_{\tilde\Omega}\cup \beta\>$ we mean the free factor
obtained from $\< T\cup E_{\tilde\Omega}\>$ by adding an element representing the conjugacy
class of $\beta$ chosen so
that $$\<T\cup E_{\tilde\Omega}\><\<G-p(E)\><\<(G-p(E))\cup\beta\>=\<G-p(E)\>*\<\beta\>$$
is a sequence of free factors. Note that $\<T\cup E_{\tilde\Omega}\cup\beta\>$ is proper
since $rk\<T\cup E_{\tilde\Omega}\><rk A<n$.

For the rest of the proof, assume $E_{\tilde\Omega}=\emptyset$,
i.e.\ $\tilde\Omega$ is a forest and $T$ is a maximal tree for $A|G$.

{\it Case 2: $rk(G-p(E))>1$.}  Then we may choose an embedded loop
$\beta$ in $G-p(E)$ which is either contained in $B|G$ or $\beta\cup
B|G\neq G-p(E)$ and the complement $(G-p(E))-(\beta\cup B|G)$ contains
a non-separating edge. In either case, $d_\F(\< \beta\>,B)\leq
2$. Referring back to the proof of Lemma~\ref{near embedding},
$(T-E)\vee (G-p(E))\to G$ is a homotopy equivalence and both $T-E$
(representing $A$) and $\beta$ are in the complement of a
non-separating edge of $G-p(E)$. So we also have $d_\F(A,\<
\beta\>)\leq 2$.

{\it Case 3: There is only one edge in $E$.}
Then $3\le rk(G)=1+rk(G-p(E))$ and we are in Case~2.

{\it Case 4: There is $e\in E$ and an embedded loop $\beta$ in $B|G\subset G$ such that $\beta$ misses $p(e)$.}  
Then $A\nearrow \<G-p(e)\>\searrow \<\beta\>\nearrow B$.

{\it Case 5: Every embedded loop in $B|G$ meets every edge of $p(E)$.}
Then $B$ has rank 1. For $e\in E$, $(T\cup (E-e))\vee (G-p(e))\vee
B\to G$ is a homotopy equivalence (where say the base point
corresponds say to an endpoint of $e$). If $e'\not= e$ is an
edge of $E$, then $A\searrow \<T\cup e'\>\nearrow \<(T\cup e')\vee
B\>\searrow \<B\>$.
\end{proof}

\begin{example}
Consider ${\mathbb F}_4=\< a,b,c,d\>$, $B=\< a,c,d\>$, $A=\< abaab, cb,
abd\>$. Then $B$ is embedded and $A$ is adjoined in the rose
$R$. See Figure \ref{dist 4}.

\begin{figure*}[h]
\centering
\includegraphics[scale=0.5]{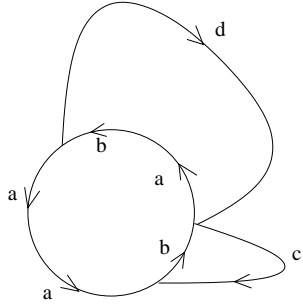}
\caption{Picture of $A|R$. The circle is $\tilde\Omega$.}
\label{dist 4}
\end{figure*}
Since $A$ and $B$ have corank 1, the distance between them is
even, and in fact it is 4 ($A$ intersects trivially all conjugates of $B$).
\end{example}

\subsection{Coloring of free factors}\label{color}

We will define a notion of a {\it color} of a free factor, so that the
projection $\pi_A(B)$ is well-defined when $A$ and $B$ have the same
color (and $A\neq B$). For example, if the color of $A$ is the image
of $H_1(A;\Z/2)\to H_1(\FF_n;\Z/2)$ then there can be no {\it embedding}
$A|G_t\to G_t$ provided $G_t$ contains a subgraph representing $B$
where $A$ and $B$ have the same color but are distinct. This notion is
not sufficient for pinching maps. For example, consider $\FF_3=\langle
a,b,c\rangle$, $B=\langle a,b\rangle$, and $A=\langle
a,cbc^{-1}\rangle$. If $G_t$ is the rose on $a,b,c$ the map $A|G_t\to
G_t$ is a pinching map, since $A|G_t$ is represented by two loops that
map to $a$ and $b$ and a connecting edge that maps to $c$.

We define the color of a free factor $A$ as the function on
$H^1(\FF_n;\Z/2)$ that to 0 assigns the $\Z/2$-homology class of $A$
(i.e. the image of $H_1(A;\Z/2)\to H_1(\FF_n;\Z/2)$) and to a nonzero
class $x$, i.e. to an index 2 subgroup $\FF_x\subset \FF$, it assigns
the $\Z/2$-homology classes of $A\cap\FF_x$ and $uAu^{-1}\cap\FF_x$ in
$\FF_x$ for $u\in \FF_n-\FF_x$. In other words, represent $A$ as a
subgraph of a graph $G$ with $\pi_1(G)=\FF_n$ and for a nontrivial double
cover $\tilde G\to G$ take the homology classes of the (one or two)
components of the preimage of
the subgraph. E.g. in the above example, $A$ and $B$ have different
colors, since unwrapping $c$ yields lifts with different homology
classes. More generally, one can show that if $A$ and $B$ are free
factors with $B$ embedded in $G$ and $A$ pinched in $G$, then either
$A=B$ or $A$ and $B$ have different colors.

Even this notion is not sufficient for near-embeddings, that is, there
are distinct free factors $A$ and $B$ that have the same color and a
graph $G$ so that $B$ is embedded and $A$ is nearly embedded in
$G$. For example, take $G$ to be the rank 3 rose with the ``identity''
marking, i.e. with loops labeled $a,b,c$, and let $B=\< c\>$, $A=\<
c[a,b]^2\>$. The commutator is squared to insure that homology classes
are equal in double covers.

\subsection{Projection is well defined}

When $A,B$ are two free factors we define the subset
$\pi_A(B)\subset\S(A)$ as the set of 1-edge splittings of $A$ obtained
as follows: choose any $G\in \X$ where $B$ is embedded and collapse
all edges but one in the graph $A|G$. We are interested in restricting
to a collection of pairs $A,B$ ``in general position'' so that
$\pi_A(B)$ is uniformly bounded.  It is easy to see that for some
pairs $A,B$ (e.g. when $A*B=\FF_n$) this set is unbounded and the
projection is not well defined.

We will argue that the projection of a free factor to the splitting
complex of another free factor is well defined assuming either:
\begin{itemize}
\item $A$ and $B$ have the same color, or
\item $A$ and $B$ are at distance $>4$ in $\F$.
\end{itemize}

First we make the following remark. Fix a graph $\Gamma_0\in\X$ so
that $B|\Gamma_0$ is a rose and $B|\Gamma_0\to\Gamma_0$ is an
embedding. If $\Gamma\in\X$ is any graph with $B|\Gamma\to\Gamma$ an
embedding, there is a difference of markings map
$\phi:\Gamma_0\to\Gamma$ that maps $B|\Gamma_0$ to $B|\Gamma$. Let
$G_t$ be the path induced by $\phi$ (see Section \ref{review}). The
first portion of the path is contained in a simplex, so it induces a
uniformly bounded path in $\S(A)$ for any free factor $A$. The second
portion is a folding path in which $B$ is embedded at all times and
for $t<t'$ $B|G_t$ maps into $B|G_{t'}$. Thus
to prove that $\pi_A(B)$ is uniformly bounded, it suffices to argue
that every folding path $G_t$ along which $B$ is embedded makes
uniformly bounded progress in $\S(A)$.

\begin{lemma}
Suppose $A$ and $B$ are distinct free factors that have the same
color. Then the projection $\pi_A(B)$ of $B$ to the splitting complex
of $A$ is a uniformly bounded set.
\end{lemma}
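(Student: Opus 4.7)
The plan is to reduce to the adjoined interval, then use the color hypothesis to control what happens there. By the remark immediately preceding the lemma, it suffices to show that along any folding path $G_t$ on which $B$ is embedded at every time, the induced path $A|G_t$ moves a uniformly bounded amount in $\S(A)$. By Proposition~\ref{analysis}, all such movement is confined to the subinterval $[\beta,\tau)$ on which $A$ is adjoined in $G_t$, so I focus there.

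Next I would partition $[\beta,\tau)$ using Proposition~\ref{details}: the homotopy types of $\Theta_t$, $\Omega_t$, and $\tilde\Omega_t$, and the rank of $\Theta_t$, change at only boundedly many times. Between these events the combinatorics of $A|G_t \to G_t$ is rigid. Each event itself ought to contribute bounded movement in $\S(A)$ by applying Lemma~\ref{distance 2} with $H = \tilde\Omega_{t}$ and $H' = \tilde\Omega_{t'}$ on either side of the event, so the real content is to bound the movement inside a single combinatorially constant subinterval.

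On such a subinterval, $B|G_t \subset G_t$ is an embedded subgraph of fixed combinatorial type, while $A|G_t \to G_t$ has the decomposition of Notation~\ref{notation} with $\tilde\Omega_t$ of fixed homotopy type. The discussion in Section~\ref{color} directly forbids $A$ from being pinched in any $G_t$ (otherwise the embedded $B$ and pinched $A$ would be forced to coincide); the same-color hypothesis on every double cover should similarly constrain how $\tilde\Omega_t$ can sit over $B|G_t$ in each nontrivial double cover of $G_t$. I would use these color constraints, combined with the rigid Notation~\ref{notation} picture and the fact that $\Theta_t$ has bounded rank, to exhibit an edge $e$ of $G_t$ whose preimage in $A|G_t$ has uniformly bounded cardinality. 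Lemma~\ref{distance}, applied to the induced difference of markings $A|G_{t_0} \to A|G_{t_1}$ for $t_0 < t_1$ in the subinterval, then gives uniformly bounded progress in $\S(A)$, completing the argument.

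The main obstacle I anticipate, flagged explicitly by the paper in Section~\ref{color}, is that the color datum does not distinguish a \emph{nearly embedded} free factor from an embedded one. So the bounded-preimage claim in the nearly embedded case is not immediate from the plain $\Z/2$-homology: I expect it requires exploiting the full color datum in every double cover $\tilde G_t \to G_t$, showing that if some edge of $B|G_t$ had unboundedly many lifts into $\tilde\Omega_t$, then some double cover would detect a disagreement between the $\Z/2$-homology classes of $A$ and $B$, contradicting the same-color hypothesis. Packaging this argument into a clean cardinality bound that works uniformly in $t$ (and independently of which subinterval we are in) is the step I expect to take the most work.
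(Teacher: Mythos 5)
Your reduction to the adjoined interval via Proposition~\ref{analysis} matches the paper, and you are right that the color hypothesis must enter through how $A|G_t$ sits over $B|G_t$. But the central step is missing, and the mechanism you sketch for it would not work. The paper's proof hinges on a single Claim: for every $t$, every point of $A|G_t\setminus\tilde\Omega_t$ maps into $B|G_t$. This is exactly where the color is used --- for a point on a nonseparating edge of $A|G_t$ one uses agreement of the $\Z/2$-homology classes, and for a point on a separating edge one passes to the double cover determined by its image and compares the lifts of $A|G_t$ and $B|G_t$. Your proposed use of the color is different: you want to show that an edge of $B|G_t$ cannot have unboundedly many lifts into $\tilde\Omega_t$. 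That statement is false and cannot follow from the color, which is a mod-$2$ invariant and is blind to multiplicities: along a typical folding path with $B$ embedded, $A|G_t\to G_t$ is highly non-injective over $B|G_t$ near the start (think of $B$ a petal of a rose and $A$ generated by long words), with no contradiction to $A$ and $B$ having the same color. The color constrains \emph{where} the injective part $A|G_t\setminus\tilde\Omega_t$ can map, not \emph{how often} $\tilde\Omega_t$ covers anything.

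There are two further mismatches in your endgame. First, to invoke Lemma~\ref{distance} for the map $A|G_{t_0}\to A|G_{t_1}$ you need a point of $A|G_{t_1}$ with boundedly many preimages in $A|G_{t_0}$; an edge of $G_{t_1}$ with boundedly many lifts to $A|G_{t_1}$ does not supply this. Second, Lemma~\ref{distance 2} requires $H\to H'$ to be a homotopy equivalence of proper \emph{core} graphs together with a point $x'$ satisfying $\phi^{-1}(x')\subset H$; applying it with $H=\tilde\Omega_t$ across an event is exactly where these hypotheses fail, and verifying $\phi^{-1}(x')\subset H$ is again the content of the Claim. The paper's actual argument, once the Claim is in place, is a short trichotomy rather than a subinterval-by-subinterval bound: if the path made large progress on two consecutive intervals $[t_1,t_2]$ and $[t_2,t_3]$ (with $\tilde\Omega_t$ of constant homotopy type on the second), then either some non-core point of $\tilde\Omega$ maps outside $B|G_t$ at $t_2$ or at $t_3$ (and Lemma~\ref{distance} applies, since a legal path in $\tilde\Omega_{t_1}$ crosses such a point at most twice, so the point has bounded preimage), or all non-core points map into $B|G_t$, in which case some core point of $\tilde\Omega_{t_3}$ maps outside $B|G_{t_3}$ (otherwise $A\subset B$, hence $A=B$ since same color forces equal rank) and Lemma~\ref{distance 2} applies. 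You will need to supply the Claim, or something equivalent, for your outline to go through.
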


\begin{proof}
Let $G_t$ be a folding path along which $B$ is embedded. We will argue
that the progress of $A|G_t$ in $\S(A)$ is uniformly bounded.  By
Proposition \ref{analysis} we may further assume that $A$ is adjoined
in every $G_t$, i.e. $\tilde\Omega_t$ is a proper subgraph of $A|G_t$.

{\it Claim.} For every $t$, every point $x$ of $A|G_t-\tilde\Omega_t$ maps
to $B|G_t\subset G_t$.

Note first that it suffices to check this for points $x$ in the
interior of an edge.
If $x$ belongs to a nonseparating edge of $A|G_t$, this
follows from the assumption that $A$ and $B$ have the same image in
$\Z/2$-homology (some loop in $B$ must cross the image of $x$). If $x$
belongs to a separating edge and maps to a point $y\in G_t-B|G_t$,
consider the double cover of $G_t$ induced by $y$ and how $A|G_t$ lifts
to it. We cut $G_t$ open along $y$ and glue two copies crosswise, and
do likewise with $A|G_t$ along $x$. Visibly, $B|G_t$ lifts to two disjoint
copies, while $A|G_t$ is cut into two parts, and one part lifts to one
copy of $B|G_t$, while the other lifts to the other. This contradicts
the assumption that $A$ and $B$ have the same color. The Claim is proved.

Assuming the statement of the lemma is false, fix $t_1<t_2<t_3$ so
that $A_t$ makes big progress in $\S(A)$ on both $[t_1,t_2]$ and
$[t_2,t_3]$ and the homotopy type of $\tilde\Omega_t$ is constant for
$t\in [t_2,t_3]$ (see Proposition \ref{details}).  We now have 3
cases. Recall that it is possible that $\tilde\Omega_t$ is not a core
graph.

{\it Case 1.} At $t=t_2$ there is a non-core point
$x\in\tilde\Omega_t$ that maps to $G_t-B_t$.

Consider the preimage of $x$ in $A|G_{t_1}$. This preimage is
contained in $\tilde\Omega_{t_1}$ by the Claim. We now observe that
every edge (or even a legal path) in $\tilde\Omega_{t_1}$ can map over
$x$ at most twice. Thus the preimage of $x$ is a uniformly bounded set
and we apply Lemma \ref{distance} to deduce that the distance in
$\S(A)$ between $A|G_{t_1}$ and $A|G_{t_2}$ is bounded, contradicting
the choice of $t_1<t_2$.

{\it Case 2.} At $t=t_3$ there is a non-core point
$x\in\tilde\Omega_t$ that maps to $G_t-B|G_t$.

Then argue in the same way and deduce that the distance in $\S(A)$
between $A|G_{t_2}$ and $A|G_{t_3}$ is bounded.

{\it Case 3.} Both Case 1 and Case 2 fail, i.e. non-core points of
$\tilde\Omega_t$ map to $B|G_t$ for $t=t_2$ and $t=t_3$. 

In this case, if $\tilde \Omega_{t_3}$ maps to $B|G_{t_3}$, then all of
$A|G_{t_3}$ maps to $B|G_{t_3}$ so we must have $A\subset B$ and therefore $A=B$
(same color implies the same rank). Therefore, there must be a point
$x$ in the core of $\tilde\Omega_{t_3}$ that maps outside
$B|G_{t_3}$. The preimage of $x$ in $A|G_{t_2}$ is contained in
$\tilde\Omega_{t_2}$ by the Claim, and in fact it is contained in the
core of $\tilde\Omega_{t_2}$ by our assumption that Case 1 fails. Thus
we are in the situation of Lemma \ref{distance 2} and we deduce that
$A|G_{t_2}$ and $A|G_{t_3}$ have bounded distance in $\S(A)$,
contradicting the choice of $t_2$ and $t_3$.
\end{proof}

\begin{lemma}
Suppose that the distance in $\F$ between free factors $A$ and $B$ is
$>4$. Then the projection of $B$ to $\S(A)$ is well defined up to
a bounded set.
\end{lemma}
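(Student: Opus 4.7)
The plan is to reduce to the case of a folding path along which $B$ is embedded, and then invoke Proposition~\ref{adjoined} to force the ``adjoined'' interval to be empty, whereupon Proposition~\ref{analysis} yields the bound.

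More precisely, as in the remark preceding the previous lemma, fix once and for all a basepoint graph $\Gamma_0\in\X$ with $B|\Gamma_0$ embedded (as a rose). Given any $\Gamma\in\X$ in which $B$ is embedded, there is a difference of markings map $\phi:\Gamma_0\to\Gamma$ carrying $B|\Gamma_0$ onto $B|\Gamma$; its induced path consists of a bounded simplex piece (harmless for $\S(A)$) followed by a folding path $G_t$ along which $B$ remains embedded at every time, with $B|G_t$ mapping into $B|G_{t'}$ for $t<t'$. It therefore suffices to show that any such folding path has uniformly bounded image in $\S(A)$.

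Apply Proposition~\ref{analysis} to the folding path $G_t$. This produces an interval $[\beta,\tau)\subseteq[\beta,\gamma)$ on which $A$ is adjoined in $G_t$, such that both complementary pieces of $[\beta,\tau)$ in $[\alpha,\omega]$ project to a uniformly bounded subset of $\S(A)$. I claim that $[\beta,\tau)$ is empty under our distance hypothesis. Indeed, if $t\in[\beta,\tau)$ then $A$ is adjoined in $G_t$ while $B$ is embedded in $G_t$; by Proposition~\ref{adjoined} this would give $d_\F(A,B)\le 4$, contradicting $d_\F(A,B)>4$.

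Hence $[\beta,\tau)=\emptyset$, and the entire folding path $G_t$, and in particular $A|G_t$, projects to a uniformly bounded subset of $\S(A)$. Since this bound is independent of the target $\Gamma$, the set $\pi_A(B)$ is uniformly bounded in $\S(A)$, proving the lemma. The only real input is Proposition~\ref{adjoined}, which does all the work; no additional obstacle is expected since everything else has been prepared in the previous subsections.
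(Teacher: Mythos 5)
Your proof is correct and is essentially the paper's own argument, just written out in more detail: the paper's proof is the one-line observation that the lemma follows from Propositions~\ref{adjoined} and~\ref{analysis} because a folding path along which $B$ is embedded cannot have $A$ adjoined anywhere.
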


\begin{proof}
This follows from Propositions \ref{adjoined} and \ref{analysis},
since a folding path along which $B$ is embedded cannot have $A$
adjoined anywhere.
\end{proof}

\subsection{Verifying conditions of Theorem \ref{bbf}}

First we check the first bullet in Theorem \ref{bbf}. The key is the
construction of a ``tame path'' joining a graph in a folding path
that's making progress in $\S(A)$ to a graph where $A$ is embedded,
and so that along that path free factors far from $A$ and the factors
of the same color do not make progress.

\begin{lemma}
Assume that $n=rk(\FF_n)\geq 3$ and that a free factor $A$ is adjoined
in $G$. Then there is a path from $G$ to $G'$ so that $A$ is embedded
in $G'$ and the path makes only a uniformly bounded progress in
$\S(B)$ if $B$ is a free factor such that either $d_\F(A,B)>4$
or $A$ and $B$ have the same color.
\end{lemma}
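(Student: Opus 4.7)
The plan is to construct $G'$ explicitly from the decomposition in Notation~\ref{notation} and then connect it to $G$ by the naturally induced path. The morphism $(T\cup E)\vee (T\cup E_{\tilde\Omega})\vee G_C\to G$ has a source graph in which $A=\langle T\cup E\rangle \ast \langle T\cup E_{\tilde\Omega}\rangle$ is visibly an embedded subgraph. Take $G'$ to be this source with any metric of volume one. The morphism $G'\to G$ is a difference of markings, so Section~\ref{review} produces an induced path from $G'$ to $G$ (a linear segment in a simplex followed by a folding path); reverse its parametrization to obtain the required path from $G$ to $G'$. Along this path every fold is a fold of directions inside $A|G_t$, so $A$ remains adjoined throughout, with the identification locus $\tilde\Omega_t$ for $A$ monotonically shrinking to the empty subgraph at $G'$.

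To bound the projection of this path to $\S(B)$, apply Proposition~\ref{analysis} with $B$ in place of $A$. The projection is already uniformly bounded outside a sub-interval $[\beta_B,\tau_B)$ on which $B$ is adjoined in $G_t$, so the only remaining task is to bound the $\S(B)$-diameter of the restricted path. On this sub-interval both $A$ and $B$ are adjoined in each $G_t$. In the distance case $d_\F(A,B)>4$, since the endpoint $G'$ has $A$ embedded, if $B$ were also adjoined in $G'$ we would contradict Proposition~\ref{adjoined} (with the roles of $A$ and $B$ reversed); hence $\tau_B$ is strictly before the end of the path, and at $t=\tau_B$ the identification locus for $B$ fills $B|G_t$, at which point Lemma~\ref{uniform crossing} together with forward invariance bounds the remainder. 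For the same-color case, the Claim in the proof of the preceding same-color lemma says every point of $A|G_t-\tilde\Omega_t$ lies over $B|G_t$; combining this with forward invariance (Proposition~\ref{details}) shows that the identification locus for $B$ stays inside a region of controlled combinatorial complexity, and Lemmas~\ref{distance} and \ref{distance 2} then yield the bound.

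The main obstacle is the analysis on the adjoined sub-interval. For the distance case, one must carefully propagate the bound backward from $\tau_B$ to $\beta_B$ using forward invariance while $B$ is still adjoined; for the same-color case, the ``same color'' Claim was originally derived assuming $B$ is embedded in the relevant graph, and transferring its conclusion to the merely-adjoined setting along our specific path is the technical core, where the hypothesis $n\geq 3$ enters to rule out degenerate configurations that would otherwise allow $\tilde\Omega_t$ for $B$ to wrap uncontrollably over the identification locus of $A$.
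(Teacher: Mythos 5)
Your construction of $G'$ and of the connecting path is exactly the paper's: fold the morphism $(T\cup E)\vee(T\cup E_{\tilde\Omega})\vee G_C\to G$ and reverse. The problem is everything after that. In the distance case, you correctly isolate the interval $[\beta_B,\tau_B)$ on which $B$ is adjoined as ``the only remaining task,'' but your argument then only bounds the progress on the \emph{complement} of that interval: Proposition~\ref{adjoined} applied at $G'$ tells you something about the endpoint, and Lemma~\ref{uniform crossing} bounds the portion where $\tilde\Omega_t=B|G_t$ --- which is precisely what Proposition~\ref{analysis} already gave you. The interval where $B$ \emph{is} adjoined, i.e.\ where progress in $\S(B)$ can actually occur, is never bounded. (There is also a direction confusion: the folding runs from $G'$ to $G$, so the adjoined interval is an initial segment near $G'$; if you want to use Proposition~\ref{adjoined} at $G'$ you must combine it with forward invariance to conclude that interval is \emph{empty}, which you do not do.) In the same-color case the gap is worse: the Claim you invoke was proved under the hypothesis that $B$ is embedded along the folding path, and no such graph exists here; moreover, same color does not prevent $B$ from being adjoined in $G'$ (Section~\ref{color} exhibits distinct same-colored factors, one embedded and one nearly embedded in the same graph), so the adjoined interval for $B$ can genuinely be nonempty and you offer no mechanism to control it --- your final paragraph essentially concedes this.

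The paper's proof uses a different and much more direct invariant. Call $B$ \emph{good} for $A$ if some $e\in E$ has $B|G$ crossing $p(e)$. Since $e\not\subset\tilde\Omega$, a generic point $y\in \operatorname{int}(p(e))$ has a single preimage in the wedge graph, hence in every $G_t$; the splitting of $B$ determined by the preimage of $y$ is therefore constant along the entire path, so the path makes no progress in $\S(B)$ at all. One then checks goodness in both cases: if $A$ and $B$ have the same color, use that $e$ is non-separating in $A|G$ and compare $\Z/2$-homology classes; if $B$ is not good, a short chain of free factors (splitting into the subcases $rk(A)>1$ and $rk(A)=1$) shows $d_\F(A,B)\le 4$. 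You need to find such a path-independent splitting of $B$; without it the argument does not close.
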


\begin{proof}
Use the conventions of Notation~\ref{notation}. In particular, we have
a morphism $(T\cup E)\vee (T\cup E_{\tilde\Omega})\vee G_C\to
G$. Folding our morphism gives the (inverse of) the desired path
$G_t$. (If some $G_t$ is not a core graph, then we restrict to the
core and normalize so that volume is 1.)

We have to check the progress of $B|G_t$ in $\S(B)$.  If there is
$e\in E$ such that the image in $G$ of $B|G$ crosses $p(e)$ (we say
$B$ is {\it good} for $A$ if this holds) then visibly this path
doesn't make progress in $\S(B)$ since the splitting of $B$ determined
by the preimage of a point $y\in int(p(e))$ is unchanged along the
path. Note that $B$ is good for $A$ if $A$ and $B$ have the same color
(since $e$ is nonseparating in $A|G$ it suffices to consider image in
$\Z/2$-homology). $B$ is also good for $A$ if $d_\F(A,B)>4$. Indeed,
suppose $B$ does not cross $p(e)$ for any $p\in E$. There are two
subcases. If $rk(A)>1$ choose an embedded loop $\alpha$ in $A|G$
missing some $e\in E$. Then $d_\F(A,\<\alpha\>)=1$ and
$d_\F(\<\alpha\>,B)\leq 3$ since the images of $\alpha$ and $B|G$ are
in a proper subgraph of $G$. If $rk(A)=1$ choose any rank 1 factor
$\<\beta\>$ of $B$ and note that $A$ and $\<\beta\>$ lie in the same
rank 2 factor.
\end{proof}

\begin{cor}\label{adj}
Assuming free factors $A$ and $B$ have the same color or are at
distance $>4$,
in the definition of the projection
$\pi_A(B)\in\S(A)$ we may take a graph $G$ in which $B$ is adjoined,
rather than embedded.
\end{cor}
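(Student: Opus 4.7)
The plan is to reduce to the previous lemma by interchanging the roles of the two free factors. That lemma starts from a graph in which $A$ is adjoined and produces a path to one in which $A$ is embedded, keeping the progress in $\S(B)$ uniformly bounded under either of our two hypotheses on the pair. Both hypotheses --- same color, or $d_\F(A,B)>4$ --- are manifestly symmetric in $A$ and $B$, so the lemma applies equally well with the roles of $A$ and $B$ swapped.

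Concretely, given a graph $G$ in which $B$ is adjoined and a free factor $A$ satisfying one of the two hypotheses with respect to $B$, I apply the swapped version of the previous lemma to obtain a path from $G$ to some $G'$ such that $B$ is embedded in $G'$ and the path makes only uniformly bounded progress in $\S(A)$. The 1-edge splitting of $A$ obtained by collapsing all but one edge of $A|G'$ lies in $\pi_A(B)$ in the original sense, since $B$ is embedded in $G'$. The splitting of $A$ obtained from $A|G$ is the candidate produced by the proposed extended definition. Because the image of the entire path in $\S(A)$ is uniformly bounded, these two splittings lie within uniformly bounded distance of one another in $\S(A)$.

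This shows that the collection of 1-edge splittings produced by taking graphs in which $B$ is merely adjoined lies within uniformly bounded Hausdorff distance of $\pi_A(B)$, so the extension does not change the projection up to a bounded ambiguity. I expect no substantive obstacle: the geometric content of the corollary is already packaged in the previous lemma, and the only thing to verify is the symmetry of the hypothesis on the pair $(A,B)$, which holds by inspection since both ``same color'' and ``$d_\F(A,B)>4$'' are symmetric relations.
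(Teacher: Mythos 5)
Your proposal is correct and is exactly the argument the paper intends (the corollary is stated without proof, as an immediate consequence of the preceding lemma): swap the roles of $A$ and $B$ using the symmetry of the two hypotheses, obtain a path from $G$ to a graph $G'$ where $B$ is embedded with uniformly bounded progress in $\S(A)$, and conclude the splitting read off from $A|G$ is boundedly close to one in $\pi_A(B)$.
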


\begin{prop}
Assume $n=rk(\FF_n)\geq 3$. Then there is a constant $\xi$ such that the
following holds.
Let $A,B,C$ be 3 free distinct factors and assume that for each pair
either they have the same color or they are at distance $>4$.
Then only one of $d_A(B,C),d_B(A,C),d_C(A,B)$
can be larger than $\xi$.
\end{prop}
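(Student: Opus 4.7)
The approach is a Behrstock-style argument: one assumes two of the three projection distances exceed $\xi$ for a sufficiently large $\xi$ (depending only on the rank of $\FF_n$) and derives a contradiction from a single folding path. By the symmetry of the three factors, one may suppose $d_A(B,C)>\xi$ and $d_B(A,C)>\xi$. Using Corollary~\ref{adj}, choose a graph $G_\alpha$ in which $B$ is adjoined and a graph $G_\omega$ in which $C$ is adjoined, so that $\pi_A(B)\approx A|G_\alpha$, $\pi_A(C)\approx A|G_\omega$, and $\pi_B(C)\approx B|G_\omega$ up to uniformly bounded error, and connect them by a folding path $G_t$, $t\in[\alpha,\omega]$.

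Proposition~\ref{analysis} together with Corollary~\ref{quasi-geodesic} locates the ``$A$-window'': all the progress of $A|G_t$ in $\S(A)$ happens on the initial subinterval $I_A=[\beta_A,\tau_A)$ of $A$'s middle interval on which $A$ is adjoined, and this progress is at least $\xi-O(1)$. Similarly, because $B$ is adjoined at the initial point $G_\alpha$, the same proposition produces an initial subinterval $[\alpha,\tau_B)$ of $B$'s middle interval on which $B$ is adjoined. Throughout $I_A$, $A$ is adjoined, so the well-definedness of $\pi_B(A)$ under the compatibility hypothesis gives $B|G_t\approx \pi_B(A)$ for every $t\in I_A$; in particular $B|G_{\tau_A}\approx \pi_B(A)$. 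Combined with $B|G_\omega\approx \pi_B(C)$ and the hypothesis $d_B(A,C)>\xi$, the projection $B|G_t$ must make progress $\geq \xi-O(1)$ in $\S(B)$ on the subinterval $[\tau_A,\omega]$, forcing (again by Proposition~\ref{analysis}) $B$ to remain adjoined strictly past $\tau_A$, i.e.\ $\tau_B>\tau_A$.

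The contradiction now comes from choosing a time $t^*\in I_A$ close to $\tau_A$. By the quasi-geodesic property, $d_{\S(A)}(A|G_{\beta_A},A|G_{t^*})\geq \xi-O(1)$. But at $G_{t^*}$ both $A$ and $B$ are adjoined (since $t^*\in I_A\subset [\alpha,\tau_B)$), and using $B$'s adjoined-ness at $G_{t^*}$ the well-definedness of $\pi_A(B)$ forces $A|G_{t^*}\approx \pi_A(B)\approx A|G_\alpha\approx A|G_{\beta_A}$ (the last equivalence because no progress occurs in $\S(A)$ before $I_A$). For $\xi$ larger than the cumulative $O(1)$ constants, these two conclusions contradict.

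The main obstacle is not combinatorial but bookkeeping: one must verify that every ``$\approx$'' above conceals a uniform constant depending only on the rank of $\FF_n$, which is automatic from Propositions~\ref{analysis}--\ref{adjoined}, Corollaries~\ref{quasi-geodesic} and~\ref{adj}, and the well-definedness lemmas for the subfactor projection, provided the folding path lives in the regime where these tools apply. No new geometric input is needed beyond the machinery of Sections~2 and~3; the content of the argument is that the $A$-adjoined window and the $B$-adjoined initial segment must overlap, and any point of overlap pins $A|G_{t^*}$ simultaneously near $\pi_A(B)$ and far from it.
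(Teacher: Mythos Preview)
Your strategy is the same Behrstock-style argument as the paper's: run a single folding path from a graph where $B$ is embedded to one where $C$ is embedded, locate the $A$-adjoined window, then use $d_B(A,C)>\xi$ to find a later $B$-adjoined window, and derive a contradiction in $\S(A)$. However, there is a gap in the step where you assert that $B$ is adjoined at your witness point $t^*\in I_A$. You justify this by ``$t^*\in I_A\subset[\alpha,\tau_B)$'', but Proposition~\ref{analysis} only guarantees that $B$ is adjoined on the interval $[\beta_B,\tau_B)$ \emph{inside $B$'s middle interval}; on $[\alpha,\beta_B)$ the graph $B|G_t$ may have an interior illegal turn or a one-gate vertex, and nothing you have arranged (neither ``$B$ adjoined at $G_\alpha$'' nor ``$B$ embedded at $G_\alpha$'') forces $\beta_B=\alpha$ or even $\beta_B<\tau_A$. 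So the inclusion $I_A\subset[\alpha,\tau_B)$ does not give $B$ adjoined at $t^*$.

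The repair is immediate and is exactly what the paper does. Your own argument already shows that $B|G_t$ must make progress $\geq\xi-O(1)$ on $[\tau_A,\omega]$, hence $[\beta_B,\tau_B)\cap[\tau_A,\omega]\neq\emptyset$. Pick $t_3$ in this intersection rather than in $I_A$. At $t_3$, $B$ is genuinely adjoined, so Corollary~\ref{adj} gives $A|G_{t_3}\approx\pi_A(B)$; and since $t_3\geq\tau_A$ lies beyond the $A$-adjoined window, Proposition~\ref{analysis} gives $A|G_{t_3}\approx A|G_\omega\approx\pi_A(C)$. Combining, $\pi_A(B)\approx\pi_A(C)$, contradicting $d_A(B,C)>\xi$. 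In other words, you do not need the two adjoined windows to overlap; it suffices that the $B$-adjoined window lies past the $A$-adjoined window.
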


\begin{proof}
Assume both $d_A(B,C)$ and $d_B(A,C)$ are large. Consider a folding
path $G_t$, $t\in [\alpha,\omega]$, from a graph where $B$ is embedded
to a graph where $C$ is embedded.  By Proposition~\ref{analysis} there
is a long interval $[t_1,t_2]$ along which $A$ is adjoined and where
the path makes large progress in $\S(A)$.  Thus the portion $G_t$,
$t\in [t_2,\omega]$ following this interval is a folding path from a
graph in which $A$ is adjoined to a graph where $C$ is embedded, but
whose length is much shorter than for the original path. Now use the
fact that $d_B(A,C)$ is large plus Corollary~\ref{adj} and run the
same argument to conclude that on some long interval $[t_3,t_4]$ in
$[t_2,\omega]$ $B$ is adjoined and makes large progress in
$\S(B)$. Thus $G_t$ for $t\in [t_3,\omega]$ is a folding path from a
graph where $B$ is adjoined to a graph where $C$ is embedded, and by
Corollary \ref{quasi-geodesic} it has a bounded image in $\S(C)$. But
this contradicts the fact that on $[t_3,t_4]$ it makes large progress
in $\S(C)$.
\end{proof}

We now verify the second bullet of Theorem~\ref{bbf}.

\begin{prop}
Again assume $n\geq 3$. There is a constant $\xi'$ such that the
following holds. For any two free factors $A,B$ there are only
finitely many free factors $C$ so that 
\begin{itemize}
\item $A$ and $C$ have the same color or $d_\F(A,C)>4$,
\item $B$ and $C$ have the same color or $d_\F(B,C)>4$, and
\item $d_C(A,B)>\xi'$.
\end{itemize}
\end{prop}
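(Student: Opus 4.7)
The plan is to fix a path in Outer space from a graph where $A$ is adjoined to one where $B$ is adjoined, argue that any $C$ with $d_C(A,B) > \xi'$ must be adjoined somewhere along this path, and then count such $C$'s combinatorially.

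First I would fix $G_0, G_1 \in \X$ with $A$ adjoined in $G_0$ and $B$ adjoined in $G_1$ (possible by the remark preceding the well-definedness lemmas, combined with Corollary~\ref{adj}), and connect them by a path $G_t$, $t \in [\alpha,\omega]$, consisting of a linear segment inside a simplex followed by a folding path as in Section~\ref{review}. This path has finite length and, by the event analysis in the proof of Proposition~\ref{details}, crosses only finitely many open simplices of $\X$; in particular only finitely many combinatorial types of marked graphs appear along it.

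Now suppose $C$ satisfies the three compatibility conditions with $d_C(A,B) > \xi'$. By Corollary~\ref{adj} applied to the pairs $(C,A)$ and $(C,B)$, the projections $\pi_C(A)$ and $\pi_C(B)$ lie within a uniformly bounded distance in $\S(C)$ of the projections of $G_0$ and $G_1$, so the induced path $C|G_t$ must traverse at least $\xi'$ minus a uniform constant in $\S(C)$. Applying Proposition~\ref{analysis} with $C$ in place of $A$, any such progress is concentrated in an interval where $C$ is adjoined in $G_t$, the complementary intervals contributing only a uniformly bounded amount to the image in $\S(C)$. Taking $\xi'$ larger than the sum of these uniform bounds forces $C$ to be adjoined in $G_{t^*}$ for some $t^* \in [\alpha,\omega]$.

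Finally I would count: since being adjoined depends only on the combinatorial type of the graph (not on edge lengths), and only finitely many combinatorial types appear along the path, it suffices to bound the number of free factors $C$ adjoined in a fixed marked graph $G$. Such a $C$ is determined by the immersion $C|G \to G$, where $C|G$ is a core graph of rank at most $n-1$ (hence has boundedly many edges) and each of its edges maps homeomorphically, with orientation, to one of the at most $3n-3$ edges of $G$. This combinatorial data is finite, so only finitely many $C$'s are adjoined in $G$, and hence only finitely many across the whole path. The main obstacle is pinning down the constant $\xi'$ so that progress in $\S(C)$ really forces $C$ to be adjoined on the fixed path; this amounts to consolidating the uniform bounds from Corollary~\ref{adj}, Proposition~\ref{analysis}, and Corollary~\ref{quasi-geodesic}.
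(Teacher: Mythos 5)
The first half of your argument (forcing $C$ to make progress along a fixed path and localizing that progress) is in the right spirit, but the counting step at the end contains a genuine error. You claim that only finitely many free factors $C$ are adjoined in a fixed marked graph $G$, on the grounds that $C|G$ has boundedly many edges each mapping homeomorphically to an edge of $G$. This confuses the natural (topological) edges of $C|G$ with edgelets: a natural edge of $C|G$ maps to an arbitrarily long immersed edge path in $G$, so the combinatorial volume of $C|G$ is not bounded by the rank. The claim itself is false: in the rose on $a,b,c$ the rank-one free factors $C_k=\langle ab^k\rangle$ are all adjoined (each circle $C_k|G$ crosses the petal $a$ exactly once, hence the immersion is $1$--$1$ over $a$), and there are infinitely many of them. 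So ``adjoined somewhere along the path'' does not by itself yield finiteness.

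The missing ingredient is a volume bound, and this is exactly how the paper's proof goes. Fix a folding path from a graph where $A$ is embedded to one where $B$ is embedded. If $d_C(A,B)>\xi'$, then since progress in $\S(C)$ happens only on the middle interval of Lemma~\ref{3 intervals} and the path is a uniform reparametrized quasi-geodesic in $\S(C)$ (Corollary~\ref{quasi-geodesic}), the middle interval for $C$ has length at least $1$, say, once $\xi'$ is large enough; and on that interval $C|G_t$ has all natural edges of size $\leq 2$, hence uniformly bounded volume. The number of immersions of a bounded-volume graph into a fixed $\Gamma$ is bounded in terms of the injectivity radius of $\Gamma$, so only boundedly many of these middle intervals can contain any given point $t$; since each has length $\geq 1$ and the path is finite, there are only finitely many such $C$. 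Your detour through ``adjoined'' and through the combinatorial types of graphs along the path is not needed and, as it stands, cannot be repaired without importing this bounded-volume input.
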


We note that the first two bullets are needed so that $d_C(A,B)=\diam
\pi_C(A)\cup \pi_C(B)$ is well defined.

\begin{proof}
Fix a folding path $G_t$ between a graph where $A$ is embedded and a
graph where $B$ is embedded. If $d_C(A,B)$ is large, there is a middle
interval of length $\geq 1$ (say) along which $C|G_t$ has bounded
volume. The number of immersions to a fixed graph $\Gamma$ with bounded volume
is bounded in terms of the injectivity radius of $\Gamma$. It follows
that only a bounded number of the middle intervals for different $C$'s
can contain the same point, so the number of these intervals is
finite.
\end{proof}

\section{Consequences}

\begin{thm}\label{product}
$Out(\FF_n)$ acts on a finite product $Y=Y_1\times\cdots\times Y_k$ of
  hyperbolic spaces so that every exponentially growing automorphism
  has positive translation length.
\end{thm}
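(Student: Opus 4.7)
The plan is to combine the Main Theorem with the BBF machinery (Theorem~\ref{bbf}) to construct $Y$, and then use relative train track theory to locate, for any exponentially growing $\Phi$, either a free factor fixed by a power of $\Phi$ on which $\Phi$ acts loxodromically, or (in the fully irreducible case) an orbit of factors whose BBF projections grow.

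First I would apply the Main Theorem to obtain an equivariant partition of the vertices of $\F$ into finitely many color classes $\mathcal{Y}_1,\ldots,\mathcal{Y}_k$ so that within each class the subfactor projections $\pi_A(B)\subset\S(A)$ satisfy both bullets of Theorem~\ref{bbf}. Feeding the family $\{\S(A):[A]\in\mathcal{Y}_i\}$ of (uniformly) hyperbolic spaces into Theorem~\ref{bbf} produces a hyperbolic space $Y_i$ in which each $\S(A)$ isometrically embeds as a convex subset. Because the coloring is $Out(\FF_n)$-equivariant and the BBF construction is functorial, $Out(\FF_n)$ acts by isometries on each $Y_i$, and hence diagonally on the product $Y=Y_1\times\cdots\times Y_k$.

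To show that every exponentially growing $\Phi$ has positive translation length, I would invoke the Bestvina--Feighn--Handel theory of attracting laminations: $\Phi$ has a nontrivial attracting lamination $\Lambda$. If $\Lambda$ is carried by a proper free factor system, then after replacing $\Phi$ by a power we may assume that some proper free factor $A$ of rank $\geq 2$ is $\Phi$-invariant and that the restriction $\Phi|_A$ is fully irreducible and exponentially growing; such a restriction is loxodromic on the hyperbolic space $\S(A)$, and since $\S(A)$ sits isometrically as a convex subset of $Y_i$ (where $i$ is the color of $A$), $\Phi$ has positive translation length on $Y_i$ and hence on $Y$. If instead $\Lambda$ fills, then $\Phi$ is itself fully irreducible; in this case I would pick any $[A]\in\mathcal{Y}_i$ and argue that a folding axis of $\Phi$ in Outer space projects, via Corollary~\ref{quasi-geodesic}, to a reparametrized quasi-geodesic in each induced $\S(\Phi^n A)$ along which the BBF distances $d_{\Phi^n A}(A,\Phi^{2n} A)$ grow linearly in $n$, forcing $\Phi$ to be loxodromic on $Y_i$ (here $[\Phi^n A]$ all lie in a single color class since the coloring is $Out(\FF_n)$-invariant).

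The key obstacle is the fully irreducible case: since no power of $\Phi$ fixes a proper free factor, one cannot appeal to a loxodromic restriction and must instead show directly that the orbit $\{\Phi^n[A]\}$ separates in the BBF projection metric, which ultimately reduces to the quasi-geodesic property of projected folding paths established in Corollary~\ref{quasi-geodesic}. The non-filling case is by now routine given relative train track theory together with hyperbolicity of $\S(A)$ (Theorem~\ref{hm2} applied to $A$).
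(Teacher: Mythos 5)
Your construction of $Y$ (color the factors via the Main Theorem, feed each color class into Theorem~\ref{bbf}, take the product) is exactly the paper's. The dynamical step, however, has a genuine gap, and it is precisely the point you flag as ``the key obstacle.'' The input the paper uses is Theorem~\ref{hm:translate} (announced by Handel--Mosher): an automorphism with an EG stratum whose attracting lamination has \emph{full support} acts with positive translation length on the splitting complex. This is \emph{not} a consequence of hyperbolicity of $\S(A)$; knowing $\S(A)$ is hyperbolic says nothing about which automorphisms are loxodromic on it, so your closing claim that the non-filling case is ``routine given \ldots hyperbolicity of $\S(A)$'' does not close the argument. The paper's proof is short precisely because it quotes this theorem: take the smallest free factor $A=Supp(\Lambda)$ carrying the lamination, note $A$ is $\Phi$-periodic, apply Theorem~\ref{hm:translate} to the restriction of $\Phi^j$ on $\S(A)$ (whose lamination fills $A$ by minimality), and use that $\S(A)$ sits as a convex, $\Phi^j$-invariant subset of the relevant $Y_i$.

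Two further assertions in your case analysis are false as stated. First, the restriction of (a power of) $\Phi$ to $A=Supp(\Lambda)$ need not be fully irreducible: minimality of $A$ only says no proper free factor of $A$ carries $\Lambda$; the restriction may still fix a proper free factor of $A$ (e.g.\ one supporting a lower, NEG stratum). What is true, and what Theorem~\ref{hm:translate} needs, is only that the lamination fills $A$. Second, ``$\Lambda$ fills $\Rightarrow$ $\Phi$ is fully irreducible'' is false for the same reason. Finally, your proposed mechanism in the filling case --- linear growth of the subfactor projections $d_{\Phi^nA}(A,\Phi^{2n}A)$ along a folding axis --- is unsubstantiated and runs against the grain of the paper: Corollary~\ref{quasi-geodesic} gives only a \emph{reparametrized} quasi-geodesic (which may be bounded), and the Bounded Geodesic Image theorem indicates that a geodesic in $\F$ staying far from $A$ has bounded projection to $\S(A)$; indeed Section~\ref{further} stresses that orbit maps to $Y$ are not quasi-isometric embeddings. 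In the filling case the correct route is again Theorem~\ref{hm:translate}, applied to $\S$ itself (included as a factor of the product).
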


In the proof we need the following theorem, announced by
Handel-Mosher \cite{S}. Recall \cite{tits1} that every $EG$ stratum of a $\Phi\in
Out(\FF_n)$ has an associated {\it attracting lamination} $\Lambda$,
carried by a unique free factor $Supp(\Lambda)$ of $\FF_n$. The
lamination $\Lambda$ has uncountably many recurrent leaves and none of
them are carried by a proper free factor of $Supp(\Lambda)$.

\begin{thm}
[Handel-Mosher]
\label{hm:translate}
Suppose $\Phi$ has an $EG$ stratum whose associated lamination $\Lambda$
has full support, i.e. $Supp(\Lambda)=\FF_n$. Then $\Phi$ acts with
positive translation length on the splitting complex $\S$.
\end{thm}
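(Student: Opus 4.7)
The plan is to construct a $\Phi$-invariant bi-infinite folding line in $\X$, project it to $\S$ via the coarse map $\X\to\S$, and verify that the projection is a quasi-geodesic of infinite diameter. Begin with a train track representative $f\colon G\to G$ of $\Phi$ whose top stratum is the $EG$ stratum carrying $\Lambda$, with dilatation $\lambda>1$. After rescaling to maintain unit volume, $f$ becomes an optimal difference of markings $G\to G\cdot\Phi$, and the induced folding path $\gamma$, together with its translates by powers of $\Phi$, fits together into a bi-infinite $\Phi$-equivariant folding line $\tilde\gamma\subset\X$ on which $\Phi$ acts by translation of one period.

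By Corollary~\ref{quasi-geodesic} and the appendix, $\tilde\gamma$ projects to a reparametrized quasi-geodesic in $\S$ when restricted to any bounded subinterval, with uniform constants. Combined with the $\Phi$-equivariance, this reduces the theorem to showing that the image of a single fundamental domain of $\tilde\gamma$ in $\S$ has diameter greater than a constant depending only on the quasi-geodesic and hyperbolicity constants of $\S$; then a standard local-to-global argument for quasi-geodesics in hyperbolic spaces promotes the local progress into a global quasi-geodesic and gives $\Phi$ positive translation length.

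To establish uniform progress over one fundamental domain, I would argue by contradiction using the full-support hypothesis. If, after replacing $\Phi$ by a sufficiently high power, the progress of $\tilde\gamma$ over one period in $\S$ is smaller than this constant, then the orbit of any basepoint under $\langle\Phi\rangle$ is bounded in $\S$. Since $\S$ is a hyperbolic simplicial complex (Theorem~\ref{hm2}) and the vertices form a discrete set, a cyclic group with bounded orbits on $\S$ has a periodic vertex, yielding a $\Phi^N$-invariant conjugacy class of one-edge free splitting of $\FF_n$. The vertex group of such a splitting is a proper free factor $A\subsetneq\FF_n$ with $[\Phi^N(A)]=[A]$. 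The standard dynamical fact that any $\Phi^N$-invariant conjugacy class of free factor carrying the growth of $\Phi^N$ must contain $Supp(\Lambda)$ then forces $Supp(\Lambda)$ to be conjugate into $A$, contradicting $Supp(\Lambda)=\FF_n$.

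The main obstacle is the last step: in a general hyperbolic space a parabolic element need not give a periodic vertex, so one must use specific features of the action on $\S$ to rule out parabolic behaviour. One way to bypass this issue is to work more quantitatively along $\tilde\gamma$, counting the distinct one-edge splittings of $\FF_n$ visible by collapsing all but one edge of $G\cdot\Phi^k$ for $k=0,\dots,n$; the exponential stretching on the $EG$ stratum produces many new candidate edges, while the full-support hypothesis is exactly what prevents all of these splittings from clustering inside the star, in $\S$, of a single vertex corresponding to a proper free factor. This should combine with the local quasi-geodesic property to yield linear growth of the number of distinct nearby vertices visited, and hence linear progress in $\S$.
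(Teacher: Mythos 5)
First, note that the paper offers no proof of this statement: Theorem~\ref{hm:translate} is explicitly quoted as ``announced by Handel--Mosher'' and is used as a black box in the proof of Theorem~\ref{product}, so there is nothing internal to compare your argument with. Your reduction is sound as far as it goes: the train track representative yields a $\Phi$-periodic bi-infinite folding line, each finite subpath projects to a reparametrized quasi-geodesic in $\S$ with uniform constants (by the appendix), and since distances between points in order on such a path are coarsely additive in a hyperbolic space, either the image of the line is bounded or $\Phi$ is loxodromic. The entire theorem therefore reduces to showing that the image is unbounded, and that is exactly the point at which the proposal stops being a proof.

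Two concrete problems. (a) As you acknowledge, bounded orbits on a hyperbolic complex do not yield a periodic vertex, and your substitute --- counting one-edge splittings along the axis and asserting that full support ``is exactly what prevents clustering'' --- is a restatement of the desired conclusion rather than an argument: no mechanism is offered that converts the hypothesis ``no proper free factor carries $\Lambda$'' into a lower bound on distances in $\S$. (b) More seriously, even granting a $\Phi^N$-periodic one-edge splitting, the ``standard dynamical fact'' you invoke is false. An automorphism with a filling attracting lamination can preserve the conjugacy class of a proper free factor: take a pseudo-Anosov on a compact surface with two boundary components whose fundamental group is $\FF_3=\langle a,b,c\rangle$ with $c$ a boundary word; then $[\langle c\rangle]$ is periodic while $Supp(\Lambda)=\FF_3$. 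This is precisely the gap between being loxodromic on $\F$ (equivalent to full irreducibility) and being loxodromic on $\S$ (equivalent to having a filling lamination), so an argument that only extracts a periodic free \emph{factor} cannot reach a contradiction. What must be shown is that a periodic free \emph{splitting} forces $\Lambda$ to be carried by one of its vertex groups, and that is the genuine content of the Handel--Mosher proof, which is absent here.
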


\begin{proof}[Proof of Theorem \ref{product}]
Let $Y_i$ be the hyperbolic space produced by Theorem \ref{bbf} for
the collection of free factors colored by color $i$. Then $Out(\FF_n)$
acts on the product $Y_1\times\cdots\times Y_k$.

Let $\Phi\in Out(\FF_n)$ be exponentially growing. Choose an EG stratum
and let $\Lambda$ be the associated attracting lamination. There is a
smallest free factor $A$ that carries $\Lambda$ and $A$ is
$\Phi$-periodic \cite{tits1}. Thus for some $j>0$ we have
$\Phi^j(A)=A$, so the restriction of $\Phi^j$ to $\S(A)$ has positive
translation length, by Theorem~\ref{hm:translate}. It follows that $\Phi^j$
preserves the copy of $\S(A)$ in the suitable coordinate $Y_i$ and has
positive translation length.
\end{proof}

Recall that we have a coarse projection $\pi_A$ defined on the
complement of the 4-ball around $A\in\F$ with values in $\S(A)$. We
claim $\pi_A$ is uniformly coarsely Lipschitz. Indeed, if $B_1<B_2$
choose a graph $G\in\X$ so that both $B_1$ and $B_2$ are embedded in
$G$. We conclude that $\pi_A(B_1)\cap \pi_A(B_2)\neq\emptyset$, which
proves our claim.

\begin{thm}[Bounded geodesic image theorem]
Let $(A_i)$ be a geodesic in $\F$ and $A\in\F$ at distance $>4$
from each $A_i$. Then the set of projections of the $A_i$'s to $\S(A)$
is a uniformly bounded set.
\end{thm}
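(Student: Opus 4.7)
My plan is to combine the folding-path technology of Propositions~\ref{analysis} and~\ref{adjoined} with $\delta$-hyperbolicity of $\F$ (Theorem~\ref{bf}) and the coarse Lipschitz property of $\pi_A$ stated just before the theorem. The guiding principle is that along any folding path $A|G_t$ can only make $\S(A)$-progress when the $\F$-projection of $G_t$ enters the $4$-ball about $A$, whereas the geodesic $(A_i)$ stays outside that ball.

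I first fix a folding path $G_t$ from a graph where $A_0$ is embedded to a graph where $A_n$ is embedded. By the remark closing Section~\ref{review}, $\pi_\F(G_t)$ is a reparametrized quasi-geodesic in $\F$ with uniform constants, joining vertices within bounded distance of $A_0$ and $A_n$. The Morse lemma in the $\delta$-hyperbolic space $\F$ then supplies a uniform constant $C$ such that $\pi_\F(G_t)$ fellow-travels the geodesic $(A_i)$ within Hausdorff distance $C$. Set $D=5+C$ and let $I_{\mathrm{cl}}=\{i:d_\F(A_i,A)\le D\}$. Since a geodesic in a $\delta$-hyperbolic space traverses any ball of radius $D$ in a single subinterval of uniformly bounded length, $I_{\mathrm{cl}}$ is an interval of uniformly bounded cardinality $L$.

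For each $i\notin I_{\mathrm{cl}}$ with $i<\min I_{\mathrm{cl}}$, I fix a graph $H_i$ in which $A_i$ is embedded and consider a folding path $G^{(i)}_t$ from $G_0$ to $H_i$. Its $\F$-projection, again a reparametrized quasi-geodesic with uniform constants, stays by Morse within $C$ of the $\F$-geodesic from $A_0$ to $A_i$; the latter is a subsegment of $(A_j)$ and hence remains outside the $D$-ball of $A$. Consequently every $\pi_\F(G^{(i)}_t)$ lies outside the $(D-C)=5$-ball of $A$, in particular at $\F$-distance $>4$. The contrapositive of Proposition~\ref{adjoined}, applied to a free factor realized as a vertex stabilizer of a 1-edge collapse of $G^{(i)}_t$, rules out $A$ being adjoined anywhere along $G^{(i)}_t$. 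Proposition~\ref{analysis} then forces the entire $\S(A)$-image of $G^{(i)}_t$ to be uniformly bounded, so $\pi_A(A_i)$ is within uniform distance of $\pi_A(A_0)$. The symmetric folding path from $G_N$ to $H_i$ handles $i>\max I_{\mathrm{cl}}$ and gives $\pi_A(A_i)$ within uniform distance of $\pi_A(A_n)$ in that range.

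For $i\in I_{\mathrm{cl}}$, consecutive vertices $A_i,A_{i+1}$ lie at $\F$-distance $1$ and both have well-defined $\pi_A$, so the coarse Lipschitz property recalled before the theorem yields $\pi_A(A_i)\cap\pi_A(A_{i+1})\neq\emptyset$. Because $|I_{\mathrm{cl}}|\le L$ is uniform, the chain of projections across $I_{\mathrm{cl}}$ has uniformly bounded total diameter and splices together the two outer clusters, bounding $\diam\bigcup_i\pi_A(A_i)$. The technically delicate step is the Morse/stability argument in the third paragraph: one must choose $D$ large enough (in terms of the hyperbolicity and quasi-geodesic constants) that the fellow-traveling inflation still pushes $\pi_\F(G^{(i)}_t)$ strictly outside the $4$-ball of $A$, because that strict separation is precisely what activates the contrapositive of Proposition~\ref{adjoined} and switches off all $\S(A)$-progress along the auxiliary folding paths.
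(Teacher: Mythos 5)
Your proof is correct and follows essentially the same route as the paper's: cut out the uniformly bounded portion of the geodesic near $A$ (handled by the coarse Lipschitz property of $\pi_A$), and control the far portions via folding paths whose $\F$-images, by the Morse lemma for unparametrized quasi-geodesics, stay outside the $4$-ball of $A$, so that Propositions~\ref{analysis} and \ref{adjoined} forbid any progress in $\S(A)$. The only cosmetic difference is that you run one auxiliary folding path per far vertex $A_i$ anchored at $G_0$ (resp.\ the terminal graph), whereas the paper uses a single fellow-traveling folding path and transfers each far $A_i$ to it with the Lipschitz property of $\pi_A$; the ingredients are identical.
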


\begin{proof} It suffices to prove the theorem for finite
  geodesics. We will use the fact that folding paths in $\X$ project
  to uniform reparametrized quasi-geodesics in $\S$ (see the
  appendix). There is a folding path $G_t$ whose image in $\F$ is a
  uniform distance, say $K$, from $(A_i)$. Since the projection (where
  defined) from $\F$ to $\S(A)$ is Lipschitz, we may cut out a portion
  of $(A_i)$ at distance $\leq K+4$ from $A$ and argue that the
  projections of the remaining (one or two) geodesics are
  bounded. Again using that the projection is Lipschitz we may replace
  the $A_i$'s by the image of (one or two) folding paths. But a
  folding path doesn't make progress in $\S(A)$ unless it projects (in
  $\F$) close to $A$, by Propositions~\ref{analysis} and
  \ref{adjoined}.
\end{proof}

\section{Final remarks}\label{further}

\subsection{Failure of quasi-isometric embedding}

We sketch two examples showing that orbit maps $Out(\FF_n)\to Y$ in Theorem
\ref{product} are not quasi-isometric embeddings.

First, we claim that any polynomially growing automorphism $\Phi$ has
bounded orbits in $Y$. By \cite{BBF} this is equivalent to the
assertion that a folding path from $\Gamma$ to $\Gamma\Phi^m$ makes
uniformly bounded progress in every $\S(A)$, where $\Gamma$ is a
relative train track representative for $\Phi$ (see \cite{BH}). A
folding path $G_t$ can be chosen to preserve the stratification. Now
consider $A|G_t$ on the middle segment. This segment can be subdivided
into a bounded number of subsegments on which the image of $A|G_t\to
G_t$ has constant rank. On each subsegment the splitting of $A$
induced by an edge of $A|G_t$ that maps to the highest stratum of the
image is unchanged along the whole subsegment, and this proves the
claim. 

The second example, pointed out to us by Ursula Hamenst\"adt, consists
of a sequence of automorphisms that are not powers of one
automorphism. Take $n=3$ and $\phi_i:\<a,b,c\>\to \<a,b,c\>$ given by
$a\mapsto a$, $b\mapsto b$, $c\mapsto cw_i$, where $w_i$ is an
arbitrary (complicated) element of $\<a,b\>$. If the $w_i$'s get
longer, the images $\Phi_i$ of $\phi_i$ in $Out({\mathbb F}_3)$ go to
infinity, but an argument similar to the above shows that the natural
folding path from the ``identity rose'' $R$ to $R\Phi_i$ does not make
progress in any $\S(A)$.

\subsection{The case of $\F$}
One may replace the splitting complexes $\S(A)$ by the complexes of
free factors $\F(A)$ throughout the paper. A folding path $G_t$ can
make progress in $\F(A)$ only on the middle interval in Lemma \ref{3
  intervals}. In addition, now one can assert that there is only
bounded progress when $\tilde\Omega_t$ is not a forest. Rephrasing
Lemma \ref{uniform crossing}, if $A$ is malnormal there is a
subinterval of the middle interval where $A$ is nearly embedded and
outside this interval progress in $\F(A)$ is uniformly bounded. Using
Lemma \ref{near embedding} we see that when $A$ is malnormal but not a
free factor, the projection $\pi^\F_A(B)\subset\F(A)$ is defined for
every free factor $B$ and all these sets are contained in a uniformly
bounded subset of $\F(A)$, in other words $A$ ``comes'' with a
free factor, canonical up to a bounded distance in $\F(A)$ (assuming
$rk(A)>1$ and in case $rk(A)=2$ we modify the definition of $\F(A)$ as
usual so that it is the Farey graph).

Returning to the case of free factors, Main Theorem holds with
projections in $\F(A)$ without any change. We thus get an isometric
action of $Out(\FF_n)$ on a product of hyperbolic spaces. This time, an
automorphism has positive translation length if it has a {\it bottom}
EG stratum. 

Likewise, the Bounded geodesic image theorem holds without change
after replacing $\S(A)$ with $\F(A)$.

\appendix
\renewcommand\thesection{Appendix \Alph{section}}

\section{Folding paths and hyperbolicity of $\S$}
The goal of this appendix is to revisit the Handel-Mosher proof of the
hyperbolicity of $\S$ using folding paths. We prove a technical lemma
that allows us to use (the usual) folding paths with $\geq 2$ gates at
every vertex and this simplifies some of the details of the proof. We
will work with {\it liberal folding paths} that include the original
Stallings paths \cite{stallings}, the {\it greedy} folding paths as
used in this paper and in \cite{F}, and the Handel-Mosher folding
paths. The consequence is that liberal folding paths project to
reparametrized quasi-geodesics in $\S$. Our proof follows closely the
strategy in \cite{S}, but it is relatively short partially because we
don't need to be arranging 3 gates condition, and partially because we
don't provide every detail. We also note that Hilion-Horbez
\cite{HH} give a proof of the hyperbolicity of $\S$ using the strategy
of \cite{S}, but using Hatcher's surgery paths in the sphere complex
and also find simplifications from this point of view.

\renewcommand\thesection{\Alph{section}}

\subsection{Liberal folding paths}
In this appendix, we work with the barycentric subdivision $\S'$ of
$\S$. A vertex of $\S'$ can be thought of as a minimal nontrivial
simplicial $\FF_n$-tree $T$ with trivial edge stabilizers, or
alternatively as the Bass-Serre orbit space $G=T/\FF_n$, and we will freely
switch between these two points of view. A $k$-simplex of $\S'$ is a
chain $T_0\to T_1\to\dots\to T_k$ of maps between vertices of $\S'$
where each $T_i\to T_{i+1}$ is obtained by equivariantly collapsing
some non-empty set of edges. If $\XS$ denotes the space of metric,
simplicial $\FF_n$-trees with trivial edge stabilizers (up to
equivariant isometry) then there is a surjective forgetful map
$\XS\to{\S'}^{(0)}\subset\S'$. If $T\mapsto S$, we say $T$ {\it represents} $S$.

The {\it natural $CW$-structure or triangulation on $G=T/\FF_n$} is the
one induced by the natural $CW$-structure on $T$. We follow \cite{S}
and, when working with a specific (probably not natural) triangulation
refer, to edges as {\it edgelets}.

Recall that a {\it morphism} between two metric trees is an
equivariant map which is an isometric embedding on each edgelet for
some triangulation of the source. Note that every map between trees
that embeds every edge becomes a morphism after choosing a metric in
the target and pulling it back. It is always possible to choose a
metric so that after suitable subdivisions the map is simplicial and
all edges have the same length. Thus our category is morally
equivalent to the Stallings category \cite{stallings}.

By an {\it optimal} morphism we mean a morphism with $\geq 2$ gates at
every vertex. If $T,T'\in\XS$ so that every element of $\FF_n$ which is
elliptic in $T$ is also elliptic in $T'$, then there is an optimal
morphism $T''\to T'$ where $T''$ is obtained from $T'$ by
equivariantly collapsing some edges and possibly changing the metric.

We allow {\it liberal folding}, i.e.\ illegal turns can be folded with
any speed, folding can pause and then continue, and we even allow not
doing anything on a subinterval. More precisely, a {\it (liberal)
  folding path guided by optimal $f$} is
\begin{itemize}
\item
a path $G_t$, $t\in [0,L]$, in $\XS$ and 
a collection $\{G_{t_1}\to G_{t_2}\}_{0\le t_1<t_2\le L}$ of morphisms
\end{itemize}
such that 
\begin{itemize}
\item
$G_0\to G_L$ is $f$ and
$G_{t_1}\to G_{t_2}\to G_{t_3}$ is a factorization of $G_{t_1}\to
G_{t_3}$ for all $0\le t_
1<t_2<t_3\le L$.
\end{itemize}
We occasionally use $[G_{t_1},G_{t_2}]$ to denote $\{G_t\mid t\in
[t_1,t_2]\}$.  The proof of \cite[Proposition~2.2]{F} shows {\it
  mutatis mutandis} that, given $f$, there is a
(greedy) folding path guided by $f$.

\subsection{Collapses and expansions}
We will denote trees by $\tilde G$ (with various sub and superscripts)
and the quotient graphs with $G$ (with the same sub and
superscripts). If $G_t$, $t\in [0,L]$, is a liberal folding path and
if $F_L\subset \tilde G_L$ is a proper equivariant forest, define
$\tilde G_t'=\tilde G_t/F_t$ where $F_t$ is the preimage of $F_L$ in
$\tilde G_t$.

$$
\begindc{1}[5]
\obj(6,12)[12]{$G_0$}[\south]
\obj(6,18)[13]{$G'_0$}
\obj(18,12)[32]{$G_L$}[\south]
\obj(18,18)[33]{$G'_L$}
\mor{12}{32}{}[\atright,\solidarrow]
\mor{12}{13}{}
\mor{12}{32}{}
\mor{13}{33}{}
\mor{32}{33}{}
\enddc
$$

\begin{lemma}\label{A1}
$G_t'$ is a liberal folding path.
\end{lemma}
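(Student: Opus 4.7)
The plan is to verify the defining clauses of a liberal folding path directly for the collapsed family $\{G_t'\}$. First I would check that each $F_t$ is a proper equivariant subforest of $\tilde G_t$, so that $\tilde G_t'=\tilde G_t/F_t$ belongs to $\XS$. Equivariance is immediate, and any subgraph of a tree is a forest, so $F_t$ is a forest. Properness follows from that of $F_L$: the morphism $\tilde G_t\to \tilde G_L$ is surjective, hence any edge outside $F_L$ lifts to an edge outside $F_t$. The quotient $\tilde G_t/F_t$ is then a nontrivial simplicial $\FF_n$-tree whose edges, arising from edges of $\tilde G_t$ outside $F_t$, inherit trivial stabilizers; minimality and continuity of $t\mapsto G_t'$ are straightforward.

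Next I would produce the induced morphisms. For $t_1<t_2$, the morphism $\tilde G_{t_1}\to\tilde G_{t_2}$ carries $F_{t_1}$ into $F_{t_2}$, since any $x\in F_{t_1}$ has image in $F_L$ under $\tilde G_{t_1}\to \tilde G_L$ and this map factors through $\tilde G_{t_2}$; the image in $\tilde G_{t_2}$ must therefore lie in the preimage of $F_L$, namely $F_{t_2}$. The descended map $\tilde G_{t_1}'\to\tilde G_{t_2}'$ is again a morphism because an edge $e$ of $\tilde G_{t_1}$ outside $F_{t_1}$ has image outside $F_{t_2}$ (otherwise $e$ would map into $F_L$) and the original map is an isometric embedding on $e$. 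The factorization $G_{t_1}'\to G_{t_2}'\to G_{t_3}'$ passes straight through to the induced family.

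The main obstacle is verifying that the guide $G_0'\to G_L'$ is optimal, since collapsing $F_0$ could in principle merge gates at a vertex. The decisive observation is that an edge $e$ of $\tilde G_0$ has $f$-image contained in $F_L$ exactly when $e\subset F_0$, i.e.\ exactly when $e$ is collapsed in $\tilde G_0'$. Hence every direction surviving at a vertex of $\tilde G_0'$ corresponds to an edge whose image is a single edge of $\tilde G_L\setminus F_L$, and two surviving directions in distinct gates for $f$ map to distinct edges of $\tilde G_L\setminus F_L$ which remain distinct in $\tilde G_L'$. For a vertex $v'$ of $\tilde G_0'$ that is the image of a nontrivial component $C\subset F_0$, one combines the $\geq 2$ gate condition at each vertex of $C$ in $f$ with the fact that edges within $C$ are collapsed to argue that at least two surviving directions at $v'$ lie in different gates, establishing the optimality that makes $\{G_t'\}$ a liberal folding path guided by $G_0'\to G_L'$.
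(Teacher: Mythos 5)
Your routine verifications---that $F_t$ is a proper equivariant forest, that the collapses commute with the folding morphisms, and that the induced maps $\tilde G_{t_1}'\to\tilde G_{t_2}'$ are morphisms factoring correctly---are fine and correspond to what the paper treats as immediate. You also correctly isolate the one nontrivial point, optimality of the guide $G_0'\to G_L'$, and correctly observe that the only problematic vertices of $\tilde G_0'$ are those coming from nondegenerate components $C$ of $F_0$. But at exactly that point the proof stops: ``one combines the $\geq 2$ gate condition at each vertex of $C$ \dots\ to argue that at least two surviving directions at $v'$ lie in different gates'' is the statement to be proved, not an argument for it. Worse, the hinted combination does not work. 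At a vertex $x$ of $C$ the two gates of $f$ may be witnessed by a direction pointing into $F_0$ (which is collapsed and does not survive) together with one pointing out of $F_0$; since the former maps into $F_L$ and the latter out of it, they are automatically in different gates and this gives no information. Nothing in the vertexwise 2-gates condition prevents all surviving directions along $C$---possibly based at different vertices of $C$---from mapping to one and the same direction out of $f(C)$, which is exactly the failure of optimality you must exclude.

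The paper excludes it by a global argument on the component $C$. If $Stab(C)\neq 1$, pick a surviving direction $d$ and a nontrivial $g\in Stab(C)$: then $d$ and $gd$ are distinct surviving directions at the collapsed vertex and $f(gd)=gf(d)\neq f(d)$ because direction stabilizers in $\tilde G_L$ are trivial. If $Stab(C)=1$, then $C$, hence $f(C)$, is a nondegenerate finite tree; if all directions out of $C$ mapped to a single direction based at $v\in f(C)$, choose an endpoint $w\neq v$ of $f(C)$ and a point $x\in C$ with $f(x)=w$. No direction out of $C$ can be based at $x$ (its image would be based at $w\neq v$), and every direction of $C$ at $x$ maps to the unique direction of $f(C)$ at the leaf $w$; so $x$ has only one gate in $\tilde G_0$, contradicting optimality of $f$. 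Some argument of this kind---using either the trivial direction stabilizers or the leaf of $f(C)$---is genuinely needed; it cannot be read off vertex by vertex as your sketch suggests.
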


Note that even if $G_t$ is a greedy folding path, the quotient $G_t'$
may not be greedy since we could collapse the parts where folding
takes place (so in fact $G_t'$ may have subintervals on which it is
constant). 

\begin{proof}
There is a natural map $f':\tilde G_0'\to \tilde G_L'$ with respect to
which $\tilde G_t'$ is a liberal folding path guided by $f'$, but we must
show that every vertex has at least 2 gates. See \cite[Proposition
  4.4]{S}. This is clear except possibly at vertices coming from the
components of $F_0$. If $d,d'$ are distinct directions coming out of
one such component $C$ and if $d'=g(d)$ for some $g\in Stab(C)$, then
$d,d'$ must map to distinct directions out of $f(C)$ since direction
stabilizers are trivial. Thus we are reduced to considering the case
when $Stab(C)=1$, i.e.\ when $C$ is a nondegenerate finite tree. If all
directions out of $C$ map to the same direction out of $f(C)$, say
based at $v\in f(C)$, then the finite nondegenerate tree $f(C)$ has an
endpoint $w$ distinct from $v$. Any vertex of $C$ that maps to $w$ can
have only one gate, contradiction.
\end{proof}

There is a reverse operation of {\it expansion}. Let $G_t$, $t\in
[0,L]$, be a folding path and let $\tilde G_L'\to \tilde G_L$ be a
collapse map. Define $\tilde G_t'$ as the pull-back of the maps
$\tilde G_t\to \tilde G_L$ and $\tilde G_L'\to \tilde G_L$. The
induced map $\tilde G_t'\to \tilde G_L'$ is an isometry on subtrees
that map to point in $\tilde G_t$.

$$
\begindc{1}[5]
\obj(6,12)[12]{$G_0$}[\south]
\obj(6,18)[13]{$G_0'$}
\obj(18,12)[32]{$G_L$}[\south]
\obj(18,18)[33]{$G'_L$}
\mor{12}{32}{}[\atright,\solidarrow]
\mor{13}{12}{}
\mor{12}{32}{}
\mor{13}{33}{}
\mor{33}{32}{}
\enddc
$$

As $t$ increases the path $G_t'$ is a liberal folding path except at
times $t$ when in the path $G_t$ distinct preimages of points in $G_L$
that got blown up get identified under folding. At those times
different copies of blow-up trees get identified. To transform $G_t'$
into a folding path insert an interval where these different copies
get identified (equivariantly) by folding. Thus the blow-up operation is not
completely canonical as it requires a choice (the choice could be made
canonical by taking the greedy identification path).

\begin{lemma}\label{pullback}\label{A2}
Thus modified, $G_t'$ is a liberal folding path.\qed
\end{lemma}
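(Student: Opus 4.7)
The plan is to verify the three defining properties of a liberal folding path for the modified $G_t'$: (i) each $\tilde G_t'$ is a minimal simplicial $\FF_n$-tree with trivial edge stabilizers, i.e.\ a point of $\XS$; (ii) the induced maps $\tilde G_{t_1}'\to \tilde G_{t_2}'$ are honest morphisms (isometric on edgelets for some triangulation); and (iii) these maps compose compatibly, with $\tilde G_0'\to\tilde G_L'$ being the optimal morphism prescribed by the pullback. The first point is essentially free: the collapse $\tilde G_L'\to\tilde G_L$ has trivial edge stabilizers (edge stabilizers in $\tilde G_L'$ are either already trivial in $\tilde G_L$ or lie in the collapsed forest), so pulling back and inserting additional folds neither creates edge stabilizers nor destroys minimality.

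The heart is point (ii). First I would describe the pullback concretely: for each $t$, $\tilde G_t'$ is obtained from $\tilde G_t$ by replacing each vertex $v$ whose image in $\tilde G_L$ is blown up by the corresponding blow-up subtree of $\tilde G_L'$, with incident edges attached where the original morphism sent them. The map $\tilde G_{t_1}'\to \tilde G_{t_2}'$ is then an isometry on each blow-up subtree (onto the blow-up subtree over the image vertex in $\tilde G_{t_2}$) and agrees with $\tilde G_{t_1}\to \tilde G_{t_2}$ on the complement. This is already a morphism \emph{except} at times $t_2$ when the folding in $\tilde G_t$ identifies two distinct vertices $v,w$ of $\tilde G_{t_1}$ both carrying nondegenerate blow-up subtrees: then $\tilde G_{t_1}'$ contains two isometric copies of that subtree which must coalesce in $\tilde G_{t_2}'$.

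The remedy is to insert, immediately before each such identification event, a short liberal folding subinterval along which the two copies are folded together equivariantly, one pair of corresponding edges at a time (directions at the common image of $v$ and $w$). Each elementary step is an honest fold of two directions based at the same vertex, hence a morphism; the composition of finitely many such folds (one per orbit of identifications occurring at that instant) produces a tree canonically isomorphic to the pullback at the later time, and the total map agrees with the pullback map by construction. Choosing the greedy identification makes the modification well defined. For the factorization property (iii), the universal property of pullbacks ensures compatibility on the segments of pure pullback, and the inserted folds compose by construction. The main obstacle is the optimality condition of $\geq 2$ gates at every vertex along the inserted fold intervals; I would argue exactly as in Lemma~\ref{A1}: at a vertex inside a blow-up tree, optimality is inherited from the guiding optimal morphism $\tilde G_L'\to \tilde G_L'$ composed with later folding; at a vertex where two copies are being folded, the two copies themselves supply distinct directions with different stabilizer-orbits, and should all directions collapse to one gate one would find an endpoint of a nondegenerate finite subtree with only one gate, contradicting the corresponding dichotomy used in Lemma~\ref{A1}. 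Putting these pieces together yields the desired liberal folding path.
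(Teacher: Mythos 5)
Your proposal is correct and follows essentially the same route as the paper, which states this lemma with no separate proof beyond the preceding description of the pullback and the inserted equivariant identification intervals; you simply supply the details of that construction, including the optimality check modeled on Lemma~\ref{A1}. The only point worth flagging is that the pullback may acquire valence-one vertices at leaves of blow-up trees with no edges attached, so one should pass to minimal subtrees (cores), as the paper does implicitly elsewhere.
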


Thus given a folding path and an expansion of the terminal graph,
there is an induced folding path. We will normally suppress the
required insertions of constant paths and reparametrizations.

\subsection{Distance estimates in $\S'$}
With
little change, the proof of Lemma~\ref{distance} shows:
\begin{lemma}\label{simple}
Let $f:G\to G'$ be an optimal morphism. Then $d_{\S'}(G,G')$ is
bounded by a linear function of the cardinality of $f^{-1}(y)$ for any
$y\in G'$.

In particular, if $G_t$ is a folding path guided by $f$ then the
diameter of the image of the folding path in $\S'$ is bounded by a
linear function of the cardinality of $f^{-1}(y)$ for any $y\in G'$.
\end{lemma}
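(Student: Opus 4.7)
The plan is to mimic the proof of Lemma~\ref{distance}, inducting on $k = |f^{-1}(y)|$. First, perturb $y$ slightly so that it lies in the interior of an edgelet of $G'$ and its preimage avoids vertices of $G$; this changes $k$ by at most a factor bounded in terms of $\rank(\FF_n)$, so absorbing that factor into the linear bound is harmless. For the base case $k=1$, the unique preimage $x=f^{-1}(y)$ lies in the interior of an edgelet, and because $f$ is edgewise an isometry it carries a small interval $I_x\ni x$ homeomorphically onto a small interval $I_y\ni y$ with $f^{-1}(I_y)=I_x$. Collapsing the complement of $I_x$ in $\widetilde G$ and the complement of $I_y$ in $\widetilde G'$ produces the same one-edge splitting of $\FF_n$, so $G$ and $G'$ project to a common vertex of $\S'$. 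For any $G_t$ on a folding path guided by $f$, the same reasoning applied to the factored morphism $G_t\to G'$ shows $G_t$ projects to the same splitting, so the image of the path in $\S'$ is bounded.

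For the inductive step, let $f_t\colon G_t\to G_L=G'$ denote the folding-path morphism and $Y_t = f_t^{-1}(y)$. Using the factorization $f_t = f_{t'}\circ g_{t,t'}$ for $t<t'$ and that morphisms of minimal $\FF_n$-trees are surjective, $|Y_t|$ is non-increasing, with $|Y_0|=k$ and $|Y_L|=1$. Set $t^* = \sup\{t : |Y_t|=k\}$; since $k>1$, we have $t^*<L$, and $|Y_t|=k$ for all $t<t^*$. For such $t$, fix one preimage $x_i(t)\in Y_t$; its preimage in $G_0$ under $g_{0,t}$ is a single point, for otherwise $|Y_0|$ would strictly exceed $k$. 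Applying the $k=1$ case to $g_{0,t}\colon G_0\to G_t$ with respect to $x_i(t)$ bounds $d_{\S'}(G_0,G_t)$ by a universal constant, and the limit as $t\to t^*$ gives the same bound for $d_{\S'}(G_0,G_{t^*})$. On $[t^*,L]$ the restricted folding path is guided by a morphism whose preimage of $y$ has cardinality at most $k-1$, so by the inductive hypothesis its image in $\S'$ has diameter bounded by a linear function of $k-1$. The triangle inequality yields the required linear bound on $d_{\S'}(G,G')$, and since the same estimate bounds the distance from $G_0$ to every intermediate $G_t$ (via $g_{0,t}$, whose preimage of $y$ has cardinality at most $k$), the whole image of the path has diameter bounded linearly in $k$.

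The main obstacle is to verify monotonicity of $|Y_t|$ and, relatedly, to confirm that the inductive hypothesis applies to the intermediate morphisms $g_{t,t'}$: one needs that they are again morphisms guiding the relevant subpaths of the folding path and that the cardinality estimate survives the pullback. Surjectivity and factorization are built into the definition of a liberal folding path, so this is essentially automatic. Once that is in hand, the argument is formally identical to Lemma~\ref{distance}, with $\S$ replaced by its barycentric subdivision $\S'$.
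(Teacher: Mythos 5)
Your proposal is correct and follows essentially the same route as the paper, which simply adapts the proof of Lemma~\ref{distance}: perturb $y$ off the vertices, handle $k=1$ by observing that collapsing the complements of matching small intervals around $x$ and $y$ yields the same one-edge splitting, and then induct by splitting the folding path at the last time the preimage still has $k$ points, applying the $k=1$ case on the first portion and the inductive hypothesis on the second. The only points you gloss (the behavior exactly at $t^*$ and the bounded-factor cost of perturbation) are glossed at the same level in the paper's own argument.
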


In the next lemma we will consider finite sets $R$ and $B$ of red and
blue points in $G'$. We will assume $R\cap B=\emptyset$ and also that
$R\cup B$ is disjoint from the vertex set. A {\it mixed region} in
$G'$ is a complementary component of $R\cup B$ whose frontier
intersects both $R$ and $B$. By pulling back red and blue points, we
obtain red and blue points in $G$, and we can talk about mixed regions
in $G$. The following lemma is a restatement of \cite[Proposition
  6.2]{HH} in the setting of optimal morphisms.

\begin{lemma}\label{complicated}
Let $f:G\to G'$ be an optimal morphism. Suppose $R$ and $B$ are
nonempty disjoint finite sets in $G'$ also disjoint from the set of
natural vertices. Then $d_{\S'}(G,G')$ is bounded by a linear function
of the number of mixed regions in $G$.
\end{lemma}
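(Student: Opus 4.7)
This lemma is the optimal-morphism version of \cite[Proposition~6.2]{HH}, and I would adapt their argument via a folding path. Take a liberal folding path $G_t$, $t\in[0,L]$, guided by $f$, with $G_0=G$ and $G_L=G'$ (such a path exists by the discussion preceding Lemma~\ref{A1}). Pull back the colored sets $R$ and $B$ via the morphisms $G_t\to G_L$ to equip each $G_t$ with red and blue points; let $m(t)$ denote the number of mixed regions of $G_t$.

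The first step is to show $m(t)$ is a non-increasing integer step function. For $t\le t'$, the morphism $G_t\to G_{t'}$ is continuous and color-preserving, so each region of $G_t$ has image contained in a single region of $G_{t'}$. Given a mixed region $M'\subset G_{t'}$, choose $r\in R\cap\partial M'$, $b\in B\cap\partial M'$, and an arc $\gamma$ in $\bar M'$ from $r$ to $b$. Any lift of $\gamma$ to $G_t$ has its interior contained in a single region $M$ of $G_t$ mapping to $M'$, with lifts of $r$ and $b$ in $\partial M$; so $M$ is mixed. Thus mixed regions of $G_t$ surject onto those of $G_{t'}$, giving $m(t)\ge m(t')$. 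Consequently $m(t)$ drops at most $m(0)$ times on $[0,L]$, partitioning the interval into at most $m(0)+1$ maximal subintervals of constancy.

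On each constant-$m$ subinterval $[t_i,t_{i+1}]$, the strategy is to bound $d_{\S'}(G_{t_i},G_{t_{i+1}})$ by a universal constant via a collapse argument. Take $F_L\subset\tilde G_L$ to be the maximal equivariant subforest contained in the union of (closures of) monochromatic components of $\tilde G_L\setminus(\tilde R\cup\tilde B)$; each such component is a subtree of $\tilde G_L$, and $F_L$ is proper because any connected graph with both colors present contains a mixed region (walking along a path from a red point to a blue point, one cannot transition directly from an $R$-monochromatic to a $B$-monochromatic region). Lemma~\ref{A1} then produces a quotient folding path $\bar G_t$ on $[t_i,t_{i+1}]$ in which the monochromatic content has been stripped away. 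On this interval no mixed regions merge, so mixed regions in $\bar G_{t_i}$ and $\bar G_{t_{i+1}}$ are in bijection; for a generic point $y$ in a mixed region of $\bar G_{t_{i+1}}$, the preimage cardinality in $\bar G_{t_i}$ is then controlled linearly by $m$, and Lemma~\ref{simple} applied to the quotient morphism yields the bounded $\S'$-distance estimate. Since collapsing an equivariant subforest moves a point only a bounded distance in $\S'$ (the graph and its quotient lying in a common simplex of $\S$), this transfers to $d_{\S'}(G_{t_i},G_{t_{i+1}})$.

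Summing over the at most $m(0)+1$ subintervals yields $d_{\S'}(G,G')\le(m(0)+1)C$, linear in the number of mixed regions of $G$. The main technical obstacle I anticipate is the collapse step: after collapsing $F_L$ one must verify that the resulting optimal morphism actually has its generic preimage cardinality controlled by the constant mixed-region count (not by internal folding within a mixed region), so that Lemma~\ref{simple} gives a uniform bound. Verifying this requires careful tracking of how mixed regions simplify and correspond under the quotient, closely mirroring the argument of Hilion-Horbez.
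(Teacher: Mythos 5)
There is a genuine gap at the heart of your argument: the per-interval bound. You decompose $[0,L]$ into maximal subintervals on which the number $m(t)$ of mixed regions is constant, and on each such subinterval you try to bound the progress in $\S'$ by collapsing monochromatic material and invoking Lemma \ref{simple}, claiming that the preimage cardinality of a generic point of a mixed region of $\bar G_{t_{i+1}}$ is ``controlled linearly by $m$.'' This claim is false. Constancy of $m$ only forbids mixed regions from merging with one another; it does not constrain folding \emph{internal} to a single mixed region, nor folding of monochromatic regions into a mixed one. For example, a mixed region of $G_{t_i}$ can be a tree with one red and many blue frontier points whose branches all fold together onto a single arc by time $t_{i+1}$: then $m\equiv 1$ on the interval while a generic point acquires arbitrarily many preimages. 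Since Lemma \ref{simple} gives an upper bound on distance only when the preimage is small (and says nothing when it is large), your mechanism cannot produce the required uniform bound on each subinterval, and the issue you flag at the end as a ``technical obstacle'' is in fact the failure mode.

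The paper's proof uses a different organizing principle. After subdividing and collapsing neutral edgelets so that mixed regions become mixed \emph{vertices}, it chooses a special folding path guided by $f$: fold only red edgelets for as long as possible, then only blue, and so on, breaking the path into monochromatic \emph{phases}. During a red phase the blue edgelets define splittings that do not change, so each phase makes uniformly bounded progress in $\S'$ --- this replaces your preimage-counting step and is insensitive to how badly points fold within a region. The number of phases is then bounded by tracking the lexicographic complexity $(N,-r)$, where $N$ is the number of orbits of mixed vertices and $r$ the sum of the ranks of their stabilizers: this complexity never increases, must strictly decrease at each phase other than the first and last (the first red fold identifying points in distinct blue subtrees either merges two orbits of mixed vertices, dropping $N$, or increases a stabilizer rank, increasing $r$), and since $r\le n$ it can decrease at most linearly many times in $N_0$. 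Note that the refinement by $r$ is essential --- your coarser invariant $m(t)$ alone cannot detect the phase transitions where two mixed vertices in the same orbit merge. Your monotonicity observation for $m(t)$ is a correct (if somewhat loosely argued --- arcs need not lift along a fold) shadow of the first of these three observations, but without the phase decomposition and the refined complexity the argument does not close.
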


This generalizes Lemma \ref{simple}, e.g.\ take $R=\{y\}$ and $B$ is a
single nearby point. In practice, $R$ and $B$ will be chosen so that
$R\cup B$ intersects every edge in one point, and then the
complementary regions are trees, and in this form the lemma appears in
\cite{S}. Note that bounding the number of mixed regions in $G$
amounts to saying that an edge of $G$ can map over both colors many
times, but the number of times it goes from one color to the other is
bounded.

\begin{proof}
We subdivide $G'$ so that the restriction of $f$ to each edge
(i.e.\ edgelet) of the induced subdivision of $G$ is an isometry and
each edgelet of the subdivision of $G'$ contains at most one colored
point. Thus each edgelet is colored blue, red, or neutral. By
collapsing all neutral edgelets, we may assume there are only red and
blue edgelets. In particular, mixed regions correspond to {\it mixed
  vertices}, i.e.\ vertices adjacent to both red and blue
edgelets. Consider a folding path $G_t$, $t\in [0,L]$, guided by
$f$. We impose the requirement that the folding path is obtained by
iteratively folding only red edgelets for as long as possible; and
then only folding blue edgelets for as long as possible, etc. Thus the
folding path is broken up into segments, called {\it phases}, and we
talk about red and blue phases, depending on which color is allowed to
get identified. The proof now follows from three observations:

\begin{itemize}
\item 
Under folding the number of mixed vertices never goes up; in fact the
complexity $(N,-r)$ can never increase, where $N$ is the number of
mixed vertices and $r$ is the sum of the ranks of the stabilizers over
the orbits of mixed vertices (viewing $G_t$ now as trees). More
precisely, the complexity changes only in the event that two mixed
vertices merge into one -- if they are in different orbits $N$
decreases, and if they are in the same orbit $r$ increases.
\item 
The progress in $\S'$ is bounded during any phase. E.g.\ during a
  red phase, blue edgelets define splittings that don't change during
  this phase.
\item 
Complexity has to decrease at least once during each phase, except
possibly first and last.
\end{itemize}

To prove the third bullet, consider a phase, say red, starting with
$G_t$. Viewing $G_t$ as a tree, it is decomposed into maximal red and
blue subtrees. If our red phase is not the first phase, each component
of the blue
subforest of $G_t$ embeds in $G_L$. If our red phase is not the last,
then there is a first time during the phase when a red fold identifies
points in distinct blue subtrees and hence complexity decreases.
\end{proof}

\begin{definition}
A {\it hanging tree} is a tree with a base vertex $v$ and a train
track structure in which $v$ and all valence one vertices have only
one gate, and every other vertex has two gates, with the direction
towards $v$ being its own gate. We draw a hanging tree with $v$ on top
and call it the top vertex.
\end{definition}

\begin{example}
Consider the map $f:R\to R$ where $R$ is the rank 3 rose with the
identity marking given by $x\mapsto x$, $y\mapsto x^kyx^{-k}$ and
$z\mapsto x^kzx^{-k}$. Then $f$ can be thought of as obtained from the
identity $R\to R$ by ``rotating'' $k$ times around $x$. The preimage
of a point in $x$ is contained in the loop $x$ together with the
hanging tree consisting of initial and terminal segments in edges
labeled $y$ and $z$. This is an example where (i) fails in the next
lemma. 
\end{example}

\begin{lemma}\label{technical}
Let $f:G\to G'$ be an optimal morphism between graphs in Outer space
and suppose that $e\subset G$ and $e'\subset G'$ are edges that define
the same splitting. Then one of the following holds:
\begin{enumerate}[(i)]
\item The cardinality of the preimage of a point $y'\in \int e'$ is
  uniformly bounded in $G$.
\item There is a hanging tree $H\subset G$ with one endpoint of $e$
  attached to the top vertex and the other endpoint to a point of $H$
  so that $e$ together with a (possibly degenerate) segment of $H$
  forms an embedded legal loop $E$. The preimage of some $y'\in e'$ is
  contained in $H\cup e$ and intersects $E$ in one point. Both
  endpoints of $e$ have two gates.
\end{enumerate}
\end{lemma}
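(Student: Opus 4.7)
The plan is to lift $f$ to an equivariant morphism $\tilde f:\tilde G\to \tilde G'$ between universal covers and to use the hypothesis that $e$ and $e'$ define the same splitting to pin down the location of the preimage set $\tilde f^{-1}(\tilde y')$, which bijects with $f^{-1}(y')$ once a lift $\tilde y'\in \tilde e'$ is chosen. The same-splitting hypothesis provides an equivariant isomorphism $\phi:T_e\to T_{e'}$ between the Bass-Serre trees obtained by collapsing the complements of $\FF_n\tilde e$ and $\FF_n\tilde e'$ respectively. Choosing the lifts $\tilde e$ and $\tilde e'$ compatibly, $\tilde f(\tilde e)$ is a legal path that crosses the $\FF_n$-orbit of $\tilde e'$ in exactly one edge, namely $\tilde e'$ itself; this produces a distinguished point of $f^{-1}(y')$ lying on $e$.

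Next, I would describe the remaining preimage points. The preimage $\tilde f^{-1}(\tilde e')$ is a disjoint union of subarcs, each contained in the interior of an edge of $\tilde G$ and mapping isometrically onto $\tilde e'$, and $\tilde f^{-1}(\tilde y')$ consists of their midpoints. The core structural claim is that every such subarc is connected to $\tilde e$ by a path in $\tilde G$ that stays within the two chambers of $\tilde G\setminus\FF_n\tilde e$ adjacent to $\tilde e$ and whose $\tilde f$-image stays in the corresponding chambers of $\tilde G'\setminus\FF_n\tilde e'$. The idea is that a preimage subarc residing in a further-away chamber would force $\tilde f$ to cross the $\FF_n$-orbit of $\tilde e'$ additional times in a pattern incompatible with $\phi$; tracking the image of such a hypothetical subarc through $\phi$ would produce a mismatch in the $T_e$- and $T_{e'}$-orbit structure. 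The union of $\tilde e$ with these connecting arcs and preimage subarcs, viewed downstairs in $G$, is the candidate for the subgraph $H\cup e$ appearing in case (ii).

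With the confinement claim in hand, I would dichotomize on its size. If the subgraph has uniformly bounded complexity (in practice, if only boundedly many subarcs hang off each side of $\tilde e$), then $|f^{-1}(y')|$ is uniformly bounded and we are in case (i). Otherwise, I would verify that the projected subgraph $H\subset G$, with its inherited train-track structure, satisfies the hanging-tree definition: the top vertex $v$ has one gate in $H$ because every direction into $H$ at $v$ is in a single folding class (all of them fold over to $\tilde e'$ through $\tilde f$), interior vertices have the ``towards $v$'' direction as one gate and the remaining directions as a second gate (they fold together at the corresponding event), and valence-one vertices have one gate by default. The embedded legal loop $E$ is obtained by concatenating $e$ with the unique segment of $H$ returning to $v$ from the far endpoint of $e$; legality at $v$ is precisely the statement that the direction of $e$ at $v$ sits in its own gate, distinct from the single $H$-gate, which forces the endpoints of $e$ to carry exactly two gates in $G$.

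The main obstacle will be the confinement step. The subtlety is that $\tilde f$ does not descend to a well-defined simplicial map $T_e\to T_{e'}$ (a single component of $\tilde G\setminus \FF_n\tilde e$ can be spread by $\tilde f$ across multiple components of $\tilde G'\setminus\FF_n\tilde e'$), so the isomorphism $\phi$ must be used indirectly as an equivariant rigidity constraint rather than as a literal factorization. Making this precise requires carefully tracking, for each candidate preimage subarc, the minimal connecting path to $\tilde e$ in $\tilde G$, and ruling out the case where this path leaves the two chambers adjacent to $\tilde e$ by using optimality of $f$ (to prevent cancellations at vertices along the path) together with the fact that any ``extra'' crossing of $\FF_n\tilde e'$ by $\tilde f$ must be matched by a corresponding structural feature in $\tilde G$ visible to $\phi$.
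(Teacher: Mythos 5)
Your proposal has the right target (a dichotomy between a bounded preimage and a hanging tree attached to $e$), but the argument is not there: the ``confinement claim'' that you yourself flag as the main obstacle is exactly the content of the lemma, and what you offer for it --- that a far-away preimage subarc ``would produce a mismatch in the $T_e$- and $T_{e'}$-orbit structure'' --- is a hope, not a proof. The difficulty you correctly identify (that $\tilde f$ does not descend to a map $T_e\to T_{e'}$, so $\phi$ can only be used as an indirect rigidity constraint) is never resolved. Moreover, even granting confinement, the second half is unjustified: you do not explain why failure of a uniform bound forces the very specific structure of (ii) --- a hanging tree whose interior vertices have the prescribed two-gate structure, an \emph{embedded legal} loop $E$ through $e$, and a preimage meeting $E$ in exactly one point. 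Your justification for the gates (``they fold together at the corresponding event'') invokes a folding path you never introduced, and the claim that all directions into $H$ at the top vertex form one gate ``because all of them fold over to $\tilde e'$'' is circular.

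For contrast, the paper's proof avoids Bass--Serre trees entirely. It factors $f$ as $G\to G_s\to G'$, where $G\to G_s$ performs every fold \emph{not} involving the image of $e$; then all illegal turns of $G_s\to G'$ involve $e_s$, and a short case analysis on the natural edge $\hat e_s$ containing $e_s$ takes over. If $\hat e_s$ has distinct endpoints (or the loop folds as a monogon), the preimage of $y'$ in $G_s$ is one or two points and a legal edge of $G-e$ can cross such a point at most twice, giving (i). The only remaining case is that $\hat e_s$ is an embedded legal loop attached to the rest of $G_s$ by a separating edge $z$; the preimage of $y'$ then lies in $\hat e_s\cup z$, and the preimage of $\hat e_s\cup z$ in $G$ is assembled from \emph{unique legal lifts} $Q_x$ of paths in $z\cup\hat e_s$ ending at the initial vertex of $e_s$ --- uniqueness and legality of these lifts (using optimality of $f$ and injectivity of $G\to G_s$ over $e_s$) is what produces the hanging tree and the one-point intersection with $E$. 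If you want to salvage your approach, you would need to either prove the confinement claim directly (and I do not see how without something like the intermediate graph $G_s$) or adopt the paper's factorization, at which point the chamber language becomes unnecessary.
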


\begin{proof}
Construct a folding path $G_t$ guided by $f$ so that on an initial
segment from $G$ to say $G_s$ the image of $e$ is not involved in any
folds and so that the map $G_s\to G'$ does not admit any folds not
involving the image $e_s$ of $e$. Let $\hat e_s$ be the natural edge
in $G_s$ that contains the edge $e_s$.  By assumption, there are no
illegal turns in $G_s$ except for those involving $e_s$. Now consider
cases.

{\it Case 1. $G_s-\hat e_s$ has no valence 1 vertices.}
Then $G_s-\hat e_s\to G'$ is an immersion and on fundamental groups induces
an isomorphism $\pi_1(G_s-\hat e_s)\to \pi_1(G'-e')$ (if $\hat e_s$ and $e'$ are
separating, we mean an isomorphism for each component). It follows that
$G_s-\hat e_s\to G'-e'$ is an embedding (in fact, a homeomorphism, unless
$e'$ is a loop connected to the rest of the graph by a separating
edge).

Now since $G_s\to G'$ is a homotopy equivalence, the image of $\hat
e_s$ crosses $e'$ exactly once. Thus for any point $y'$ in $e'$
the preimage in $G_s$ is a single point $y_s$ in $\hat e_s$. Now
note that if $\sigma$ is any edge (or a legal path) in $G-e$, its
image in $G_s$ is disjoint from $e_s$, so it can cross $y_s$ at most
twice, and conclusion (i) follows.

Whenever the endpoints of $\hat e_s$ are distinct, we are in Case~1. Moreover,
we may assume that $\hat e_s$ is a loop with vertex $a$ that has valence 3.

{\it Case 2. $\hat e_s$ is a loop with vertex $a$ with valence 3, and
  the two directions into $\hat e_s$ are equivalent.} Then the turn
between the two $\hat e_s$-directions is the only illegal turn in
$G_s$, so $G'$ is obtained from $G_s$ by folding
the ``monogon'' $\hat e_s=e_s$. Now take $y'\in G'$ to be a point in the image
of $e_s$. The preimage of $y'$ has at most two points in $G_s$ and is
contained in $e_s$. The image in $G_s$ of an edge $u\neq e$ in $G$
cannot cross $e$ so (i) again holds.

{\it Case 3. All turns in $G_s$ are legal.} Then $G_s=G'$ so (i) holds
with $y'$ a point in $e_s$.

{\it Case 4. $\hat e_s$ is a legal loop with vertex $a$ with valence 3
  and two gates.}  Thus there is a separating edge $z$ incident to
$a$. Then the complement of $\hat e_s\cup z$ embeds into $G'-e'$ as in
Case~1. Now fix $y'\in e'$, and note that the preimage of $y'$ in
$G_s$ is contained in $\hat e_s\cup z$ and meets $\hat e_s$ in a
single point.  We claim that the preimage of $\hat e_s\cup z$ in $G$
is the union of $e$ and a hanging tree, with one endpoint of $e$
attached to the top of the tree, and the other endpoint somewhere on
the tree, and so that $e$ is contained in a legal embedded loop. Once
established, the claim proves that (ii) holds.

One end of $e_s\subset G_s$ must be involved in the illegal
turn. Without loss, say $e_s$ is oriented so that the terminal
direction is involved in the illegal turn. If $u$ is any point of
$z\cup \hat e_s-e$ there is a unique embedded legal path $P_u$
starting at $u$ and ending at the initial vertex of $e_s$.  If $x\in
G$ is any point that maps to some $u\in z\cup \hat e_s-e$ (except the
endpoint of $z$ not in $e_s$), using the fact that $G\to G'$ is
optimal we see that there is a unique legal path $Q_x$ in $G$ that
maps isometrically to $P_u$ and ends at the initial vertex of $e_s$
(one can lift $P_u$ using that there are $\geq 2$ gates everywhere;
the lift must end at the initial vertex of $e$ since $G\to G_s$ is 1-1
over $e_s$; the lift is unique since otherwise a loop gets killed). The
union of all $Q_x$'s is a hanging tree with the top vertex at the
initial vertex of $e$.
\end{proof}

\begin{remark}\label{more}
If $f:G\to G'$ is an optimal map and $f(x_1)=f(x_2)$ then $x_1$ and
$x_2$ are joined by a unique immersed path whose image is
nullhomotopic rel endpoints. The existence follows from
$\pi_1$-surjectivity of $f$, and uniqueness from
$\pi_1$-injectivity. We call this path a {\it vanishing path}. Now
suppose that we are in the situation of Lemma \ref{technical} and that
(ii) holds. If $x_1,x_2\in f^{-1}(y')$ then the vanishing path between
$x_1$ and $x_2$ is contained in $H\cup e$. This can be seen by
considering the factorization $G\to G_s\to G'$ from the proof of Lemma
\ref{technical}. To elaborate, vanishing paths for points in the
preimage of $y'$ in $\hat e_s\cup z\subset G_s$ are contained in $\hat
e_s\cup z$ since the loop $\hat e_s$ folds with (a part of) $z$ on its
way to $G_L$ and identifies all points in the preimage of $y'$. Since
the map $H\cup e\to \hat e_s\cup z$ is a $\pi_1$-isomorphism, the same
argument as above shows that there is a path $\alpha$ in $H\cup E$ connecting
$x_1$ and $x_2$ whose image (tightened) is the vanishing path between
the images of $x_i$, hence $\alpha$
is vanishing as well.

Note that every immersed path in $H\cup e$ has at most one illegal turn, hence
a vanishing path has exactly one illegal turn. If $x_1,x_2\in
f^{-1}(y')-E$ are connected by a legal segment $J$, say oriented
towards $E$, then the vanishing
path from $x_1$ to $x_2$ necessarily has the form $Ju\cdot E^{-k}u^{-1}$
where $\cdot$ denotes the illegal turn. See Figure \ref{f:more}. 

\begin{figure}[h]
\begin{center}
\input{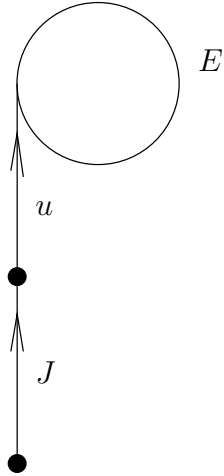}
\caption{Vanishing path in $\hat e_s\cup z\subset G_s$}
\label{f:more}
\end{center}
\end{figure}

In particular, given any folding path $G_t$ guided by $f$ (not
necessarily passing through $G_s$ above), if $x_1$ and $x_2$ are
identified in some $G_t$, then they are both identified with $y\in
f^{-1}(y')\cap E$ (the vanishing path must completely
fold). Consequently, if $x_1,\cdots,x_k\in f^{-1}(y')$ are identified
in $G_t$, then either $k$ is bounded, or all these points are
identified with $y$ (the hanging tree $H$ has bounded complexity and
given a sufficiently large subset, two of the points are connected by
a legal path).
\end{remark}

\begin{prop}\label{technical2}
Suppose $f:G\to G'$ is an optimal morphism such that $G$ and $G'$ have
edges that determine the same splitting. Then the image in $\S'$ of a
liberal folding path $G_t$, $t\in [0,L]$, guided by $f$ is bounded.
\end{prop}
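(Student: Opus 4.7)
The plan is to apply Lemma~\ref{technical} to the morphism $f:G\to G'$ and the matching edges $e\subset G$, $e'\subset G'$, yielding a point $y'\in\int(e')$ whose preimage has one of two structures. In each case we bound $d_{\S'}(G_t,G')$ uniformly in $t\in[0,L]$ by a constant depending only on $\rank(\FF_n)$, so the full image of the folding path has bounded diameter.

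In case (i) of Lemma~\ref{technical}, $|f^{-1}(y')|$ is uniformly bounded. For each $t$, the induced morphism $f_t:G_t\to G'$ is optimal (folding preserves the $\geq 2$-gates condition at every vertex, cf.\ Lemma~\ref{A1}), and a fold can only merge preimages, so $|f_t^{-1}(y')|\leq|f^{-1}(y')|$ is also bounded. Lemma~\ref{simple} applied to $f_t$ then yields $d_{\S'}(G_t,G')=O(1)$ uniformly in $t$, and we are done.

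Case (ii) is the main obstacle, since here $f^{-1}(y')$ sits in $H\cup e$ and meets the legal loop $E$ in a single point $y$, but $|f^{-1}(y')|$ may be unbounded and Lemma~\ref{simple} does not apply directly. The approach is to invoke Lemma~\ref{complicated} with $R=\{y'\}$ and a small blue set $B$ chosen in $G'-f(H\cup e)$: then preimages of $R$ lie in $H\cup e$ while preimages of $B$ lie in $G-(H\cup e)$, and the mixed regions in $G$ (and in each $G_t$) are controlled by the boundary between $H\cup e$ and its complement, whose cardinality is bounded by the topological complexity of the rank-one subgraph $H\cup e$, hence by $\rank(\FF_n)$. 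The crucial point, supplied by Remark~\ref{more}, is that any identifications within $f^{-1}(y')$ along the folding path are mediated by $y$; this prevents the mixed-region count from growing as $t$ increases, so the same bound transfers to $G_t$. The remaining delicate sub-case is when $f(H\cup e)=G'$ and no suitable $B$ exists: here one argues directly, using that $e$ lies on the legal loop $E$ with two gates at each endpoint, so $e_t\subset G_t$ remains an embedded arc determining the common splitting $\sigma$ of $e$ and $e'$, which again keeps $d_{\S'}(G_t,\sigma)=O(1)$.
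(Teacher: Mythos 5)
Your Case (i) is the paper's argument essentially verbatim (the second paragraph of Lemma~\ref{simple} already bounds the whole folding path, not just the endpoints). Your Case (ii), however, departs from the paper, which does not use Lemma~\ref{complicated} here at all. The paper instead lets $t_1$ be the first time the preimage of $y'$ becomes a single point and splits $[0,L]$ into $[0,t_1-\epsilon]$, $[t_1-\epsilon,t_1]$ and $[t_1,L]$: the last piece is handled by Lemma~\ref{simple} applied to $y'$, the middle piece by smallness of $\epsilon$, and the first piece by Lemma~\ref{simple} applied to a preimage of $y'$ in $G_{t_1-\epsilon}$ that does not lie on the image of $E$, whose preimage in $G_0$ is bounded by the last sentence of Remark~\ref{more}. (The paper also first blows up $G_L$ so that it lies in Outer space, which you need before you can invoke Lemma~\ref{technical}.)

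Your alternative via Lemma~\ref{complicated} is a plausible idea, but the crucial step is precisely the one you assert without proof: why is the number of mixed regions of $G_t$ (for the map $G_t\to G'$) bounded for all $t$, and not just for $t=0$? At $t=0$ the bound comes from the fact that every mixed region is connected and its closure meets both a red point (lying in $H\cup e$) and a blue point (lying outside), so it must contain a frontier point of $H\cup e$, and that frontier is bounded. This argument does not transfer to $G_t$: the fold $G_0\to G_t$ can identify points of $G_0-(H\cup e)$, where the blue preimages live, with points of $H\cup e$, where the red preimages live, so in $G_t$ red and blue preimages can be interleaved with no frontier point of the image of $H\cup e$ separating them, and a priori many new mixed regions could appear. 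Remark~\ref{more}, which you cite for this, only controls which red points merge with which; it says nothing about blue preimages migrating into the image of $H\cup e$, so it does not ``prevent the mixed-region count from growing.'' What actually rescues the argument is monotonicity of the number of mixed regions under folding: after the normalization in which neutral edgelets are collapsed, two edgelets in the same gate map to the same edgelet of $G'$ and hence carry the same colour, so merging two unmixed vertices cannot create a mixed one. That fact appears only inside the proof of Lemma~\ref{complicated} (its first bullet), not in its statement, so you would have to extract and establish it separately before applying the lemma to each $G_t$. Finally, your ``delicate sub-case'' $f(H\cup e)=G'$ is vacuous, since $H\cup e$ has rank one while $G'$ has rank $n\ge 2$; the sketch you give for it (that $e_t$ ``remains an embedded arc'') is unjustified, and is harmless only because the case never occurs.
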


\begin{proof}
By blowing up $G_L$ if necessary and applying
Lemma~\ref{pullback}, we may assume that $G_L$
is in Outer space, i.e.\ point stabilizers are trivial. 
Apply Lemma \ref{technical} to $G_0\to G_L$. If (i) holds, then the
conclusion follows from Lemma~\ref{simple}, so suppose (ii)
holds. Let $t_1$ be the first time that the preimage of $y'$ in
$G_{t_1}$ consists
of a single point. Then for any small $\epsilon>0$ the paths $G_t$ for
$t\in [0,t_1-\epsilon]$, $t\in [t_1-\epsilon,t_1]$ and $t\in [t_1,L]$
map to bounded sets in $\S'$. Indeed, this is true for the last one by
Lemma \ref{simple}, for the middle path by the smallness of
$\epsilon$, and for the first path again by Lemma \ref{simple} applied
to any preimage of $y'$ in $G_{t_1-\epsilon}$ not in the image of
$E$ (by the last sentence in Remark \ref{more} the preimage in $G_0$
of such a point is bounded).
\end{proof}

\subsection{Combing diagrams and rectangles}
A {\it combing rectangle} is a pair of folding paths $G_t$ and $G_t'$
together with collapse maps $G_t\to G_t'$ that commute with folding
maps so that the top row is obtained from the bottom row either by
collapsing as in Lemma \ref{A1} or by expanding as in Lemma
\ref{A2}. We represent a folding rectangle by a rectangle with sides
oriented to indicate the direction of folding and of collapse. The
folding direction is horizontal and the collapsing is vertical. A
combing diagram consists of combing rectangles and folding paths. All
horizontal rows are (liberal) folding paths and in particular all
horizontal maps are optimal.

A {\it zig-zag path} is an alternating sequence of collapses and
expansions. Every geodesic path in $\S'$ is a zig-zag path. 
We don't insists here on minimal triangulations where all
vertices have valence $>2$. 

\subsection{Simplifying}
Here we introduce simplifying moves on combing diagrams. 
Suppose we have a combing diagram
$$
\begindc{1}[5]
\obj(6,12)[12]{$G_0$}[\south]
\obj(6,18)[13]{$G_0^{''}$}[\west]
\obj(18,12)[32]{$G_L$}[\south]
\obj(18,18)[33]{$G^{''}_L$}[\east]
\obj(6,24)[15]{$G_0'$}
\obj(18,24)[35]{$G'_L$}
\mor{12}{32}{}[\atright,\solidarrow]
\mor{13}{12}{}
\mor{12}{32}{}
\mor{13}{33}{}
\mor{33}{32}{}
\mor{15}{35}{}
\mor{33}{35}{}
\mor{13}{15}{}
\enddc
$$
Let $F, F'$ be the forests in $\tilde
G_L''$ collapsed by the two maps. 
Then
\begin{itemize}
\item
If $F^{''}=F\cap F'$ contains a nondegenerate segment, there is a combing
diagram
$$
\begindc{1}[5]
\obj(6,12)[12]{$G_0$}[\south]
\obj(6,18)[13]{$G_0^{'''}$}[\west]
\obj(18,12)[32]{$G_L$}[\south]
\obj(18,18)[33]{$G^{'''}_L$}[\east]
\obj(6,24)[15]{$G_0'$}
\obj(18,24)[35]{$G'_L$}
\mor{12}{32}{}[\atright,\solidarrow]
\mor{13}{12}{}
\mor{12}{32}{}
\mor{13}{33}{}
\mor{33}{32}{}
\mor{15}{35}{}
\mor{33}{35}{}
\mor{13}{15}{}
\enddc
$$
where $\tilde G_t'''$ is obtained from $\tilde G_t''$ by collapsing
the components of the preimages of the forest $F''$.
\item If $F\cup F'\neq \tilde G_L''$, there is a combing diagram
$$
\begindc{1}[5]
\obj(6,12)[12]{$G_0$}[\south]
\obj(6,18)[13]{$G_0^{'''}$}[\west]
\obj(18,12)[32]{$G_L$}[\south]
\obj(18,18)[33]{$G^{'''}_L$}[\east]
\obj(6,24)[15]{$G_0'$}
\obj(18,24)[35]{$G'_L$}
\mor{12}{32}{}[\atright,\solidarrow]
\mor{12}{13}{}
\mor{13}{33}{}
\mor{32}{33}{}
\mor{15}{35}{}
\mor{35}{33}{}
\mor{15}{13}{}
\enddc
$$
where $\tilde G_t'''$ is obtained from $\tilde G_t''$ by collapsing
the preimage of $F\cup F'$.
\end{itemize}

The following is our version of Handel-Mosher's Proposition~6.5 from \cite{S}.

\begin{thm}[Handel-Mosher]\label{hm}
For every combing diagram
$$
\begindc{1}[30]
\obj(7,3)[73]{$\Gamma$}
\obj(2,2)[12]{$G_0^{}$}[\west]
\obj(2,1)[21]{$G_0^{''}$}[\south]
\obj(2,3)[23]{$G_0^{'}$}
\obj(5,1)[51]{$G_L^{''}$}[\south]
\obj(5,2)[42]{$G_L^{}$}[\east]
\obj(5,3)[53]{$G_L^{'}$}
\mor{12}{21}{}
\mor{12}{23}{}
\mor{42}{51}{}
\mor{42}{53}{}
\mor{12}{42}{}
\mor{21}{51}{}
\mor{23}{73}{}
\enddc
$$
there is $0\le K\le L$ and a
combing diagram
$$
\begindc{1}[30]
\obj(7,3)[73]{$G_L^{'}$}
\obj(9,3)[93]{$\Gamma$}
\obj(2,1)[21]{$G_0^{''}$}[\south]
\obj(2,3)[23]{$\hat G_0^{'}$}
\obj(5,1)[51]{$G_K^{''}$}[\south]
\obj(5,2)[62]{$G_K^{'''}$}[\east]
\obj(5,3)[53]{$\hat G_K^{'}$}
\obj(7,1)[71]{$G_L^{''}$}[\south]
\obj(2,2)[32]{$G_0^{'''}$}[\west]
\mor{21}{32}{}
\mor{23}{32}{}
\mor{32}{62}{}
\mor{21}{71}{}
\mor{23}{93}{}
\mor{51}{62}{}
\mor{53}{62}{}
\enddc
$$ so that the folding paths $[\hat G_K', G_L']$ and $[G_K^{''}, G^{''}_L]$
are uniformly bounded in $\S'$.
\end{thm}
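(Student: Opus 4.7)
\medskip\noindent
\textbf{Proof proposal.}
The plan is to reduce the combing diagram by the simplification moves of the preceding subsection and then invoke Proposition~\ref{technical2} to bound both the top and bottom folding paths on a terminal subinterval $[K,L]$.

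I would first form a common blow-up. Pulling back the expansion $\tilde G_L'\to\tilde G_L$ and enlarging by the preimage of the forest collapsed in $\tilde G_L\to\tilde G_L''$ yields a tree $\tilde G_L'''$ that simultaneously dominates $\tilde G_L$, $\tilde G_L'$, and $\tilde G_L''$. Lemmas~\ref{A1} and \ref{A2} extend this to a liberal folding path $G_t'''$ fitting into a combing diagram whose columns at the endpoints are common blow-ups of those in the original diagram. The tree $\tilde G_L'''$ then carries two equivariant forests $F_L,F_L'$ such that collapsing $F_L$ recovers $\tilde G_L''$ and collapsing $F_L'$ recovers $\tilde G_L'$.

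Next I would iteratively apply the two simplification moves from the preceding subsection to the pair $(F_L,F_L')$. If $F_L\cap F_L'$ contains an edge, Case~1 eliminates a common edge; if $F_L\cup F_L'\subsetneq\tilde G_L'''$, Case~2 eliminates a redundant edge. Each move strictly decreases a combinatorial complexity (e.g.\ the total count of $\FF_n$-orbits of edges of $F_L$ and $F_L'$), so the procedure terminates after finitely many steps at a \emph{transverse} configuration in which $F_L\cap F_L'$ consists only of vertices and $F_L\cup F_L'=\tilde G_L'''$, so every edge of $\tilde G_L'''$ lies in exactly one of the two forests.

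Now I would select $K$. The folding path $G_t'''$ decomposes into finitely many open subintervals separated by combinatorial events, analogous to the subdivision of Section~\ref{review}, at which the folding structure or the combinatorial type of the preimage forests $F_t, F_t'$ changes. Let $K$ be the supremum of these event times. For $t\in[K,L]$ every fold occurs entirely within one of the two forests and therefore leaves intact the splittings defined by edges of the other; consequently, for every edge of $\tilde G_L''$ there is an edge of $\tilde G_K''$ determining the same $1$-edge splitting, and symmetrically for the map $\hat G_K'\to G_L'$. Proposition~\ref{technical2} then bounds the images of $[G_K'',G_L'']$ and $[\hat G_K',G_L']$ in $\S'$ uniformly, completing the proof.

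The main obstacle is the bookkeeping around the simplifications, since each move changes the entire middle folding path and its accompanying pair of forests for $t<L$. I would need a strictly decreasing complexity that survives the moves, together with an argument that after termination the two forests $F_t,F_t'$ remain transverse for all $t\ge K$, so that no fold on $[K,L]$ identifies a point of one forest with a point of the other and thereby changes a splitting. Establishing this stability of transversality past $K$ — and hence that the only interesting action takes place on $[0,K]$ — is the technical heart of the argument.
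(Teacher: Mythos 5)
Your setup (common blow--up of the three graphs at time $L$, then the two simplification moves to reach the ``transverse'' situation where every edge of the middle tree lies in exactly one of the two forests) agrees with the paper, which arranges exactly this before coloring edgelets red and blue. But from that point on there are two genuine gaps. First, your choice of $K$ and the boundedness claim do not work as stated. Taking $K$ to be ``the supremum of the event times'' is not justified: the observation that a fold never identifies a red point with a blue point is true for \emph{all} $t$ (colors are pulled back from $G_L$), so it cannot single out a terminal interval; and even granting that blue folds leave red splittings intact, red folds on $[K,L]$ still occur and can change the splittings of $G_t''$ arbitrarily. In particular you have not established the hypothesis of Proposition~\ref{technical2} --- that $G_K''$ and $G_L''$ (resp.\ $\hat G_K'$ and $G_L'$) possess edges determining the \emph{same} one--edge splitting --- so the appeal to that proposition is unsupported. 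The paper's mechanism is different: it applies Lemma~\ref{complicated} (the count of mixed red/blue regions, with its phase decomposition and the monotone complexity $(N,-r)$ on mixed vertices) to locate a time $K$ at which $[G_K,G_L]$ is already bounded in $\S'$ \emph{and} at which some natural edge $e$ of $G_K$ contains a red arc flanked by blue arcs on both sides. That second conclusion is not cosmetic; see below.

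Second, you never construct the output combing diagram, which is the actual content of the theorem: a down--down stack over a new middle folding path $G_t'''$ on $[0,K]$, with a modified top--left corner $\hat G_0'$ admitting an optimal morphism to $\Gamma$ through $G_L'$. The paper builds $G_t^{rB}$ and $G_t^r$ by collapsing all red edgelets except the chosen $r\subset e$ (and, for $G_t^r$, all blue ones too); the fact that $r$ sits inside a natural edge with a blue edgelet $b$ adjacent to it is precisely what makes $G_K^{rB}\to G_K^{B}$ a homeomorphism, allowing the metric on $G_0^{rB}$ to be adjusted to $\hat G_0'$ and the optimality of $\hat G_0'\to\Gamma$ to be verified gate by gate. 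Your proposal contains no candidate for $G_t'''$ or $\hat G_0'$ and no optimality check, and you acknowledge yourself that the ``stability of transversality past $K$'' is unestablished; that admission is accurate, and it is exactly the step that Lemma~\ref{complicated} is designed to replace.
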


\begin{proof}
Equivariantly subdivide $\tilde G_L^{}$ into finitely many orbits of edgelets so
that each edgelet either collapses or maps isometrically by either
vertical map. Using the simplifying operations from section A.5 and
replacing the middle folding path by a quotient, we may assume that
every edgelet in $G_L$ collapses under exactly one of the two vertical
maps. We will color edgelets in red and blue so that red edgelets
collapse under $G_L\to G_L'$ and blue edgelets under $G_L\to
G_L''$. By further subdividing, we will also assume that 
\begin{itemize}
\item
the preimage of the interior of an edgelet in $\tilde G_0^{}$ contains
no natural vertices,
\item the restriction of $\tilde G_0^{}\to \tilde
G_L^{}$ to each component of the preimage of an edgelet is an
isometry, and
\item blue edgelets of $\tilde G_L$ map isometrically to
  $\tilde\Gamma$ via $\tilde G_L\to \tilde G_L'\to\tilde\Gamma$.
\end{itemize}

For $t\in [0,L]$, color $G_t^{}$ by pulling back via $G_t^{}\to
G_L^{}$. In particular, $G_t^{}$ is colored red and blue, $G_t\to
G_t^{''}$ is obtained by collapsing components of blue
subforests (so $G^{''}_t$ is colored red), and $G_t^{}\to
G'_t$ is obtained by collapsing red subforests (so $G'_t$
is colored blue). To focus on the colors, we write, for
example, $G_t^{R}$ for $G_t$ with all components of the
blue subforests collapsed. So, $G_t^{R}=G_t^{''}$ and is colored
red. Similarly, $G_t^{B}=G_t^{'}$ is colored
blue.

  By Lemma~\ref{complicated} applied to blue and red, there is $K\in
  [0,L]$ so that $[G_K,G_L]$ is bounded in $\S'$ and a natural edge $e$
  of $G_K$ that contains a red arc with at least one blue arc on
  either side. By further subdividing $G_L$, we may also assume:
\begin{itemize}
\item
the preimage in $G_K$ of the interior of an edgelet of $G_L$ contains
no vertices. 
\end{itemize}
In particular, $G_K$ is a union of colored edgelets with pairwise
disjoint interiors each mapping isometrically to an edgelet of
$G_L$. Let $r$ be a red edgelet in $e$ with at least one blue edgelet on
each side in $e$ and so that there is a blue edgelet $b$ in $e$
immediately adjacent to $r$.

  Denote by $G^{rB}_K$ the result of collapsing in $G_K$ 
all red edgelets except $r$ and
  by $G^{r}_K$ the result of additionally collapsing all
  blue edgelets. For $t\in [0,K)$, $G^{rB}_t$ and $G^{r}_t$ are obtained
    as above by collapsing components of preimages of edgelets. We have the
    combing diagram:
$$
\begindc{1}[30]
\obj(10,4)[104]{$\Gamma$}
\obj(3,1)[21]{$G_0^{R}$}[\south]
\obj(3,3)[23]{$G_0^{rB}$}[\west]
\obj(6,1)[51]{$G_K^{R}$}[\south]
\obj(6,2)[62]{$G_K^{r}$}[\east]
\obj(6,3)[53]{$G_K^{rB}$}[\east]
\obj(3,4)[34]{$G_0^{B}$}
\obj(6,4)[64]{$G_K^{B}$}
\obj(8,4)[84]{$G_L^{B}$}
\obj(8,1)[71]{$G_L^{R}$}[\south]
\obj(3,2)[32]{$G_0^{r}$}[\west]
\mor{21}{32}{}
\mor{23}{32}{}
\mor{32}{62}{}
\mor{21}{71}{}
\mor{23}{53}{}
\mor{23}{53}{}
\mor{53}{64}{}
\mor{23}{34}{}
\mor{34}{104}{}
\mor{51}{62}{}
\mor{53}{62}{}
\enddc
$$

By assumption, there is an equivariant homeomorphism $h:G^{rB}_K\to
G^{B}_K$ preserving edgelets not in the orbit of $r\cup b$ and mapping
$r\cup b$ to $b$. Roughly, our desired diagram is obtained by deleting
the top left element $G^{B}_0$ from the diagram above. To make this
precise, we have to adjust the metric on $G_t^{rB}$ and replace
$G^{rB}_K\to G^{B}_K$ with an isometry. We now do this.

Let $\hat G_K^{rB}$ denote $G_K^{rB}$
with the metric induced by $h$ and let $\hat G_0^{rB}$ denote
$G_0^{rB}$ with the pullback metric via $G_0^{rB}\to
G_K^{rB}\overset{h}{\to} G_K^{B}$. Edgelets and colors of $\hat
G_0^{rB}$ and $\hat G_K^{rB}$ are induced by the natural
homeomorphisms with respectively $G_0^{rB}$ and $G_K^{rB}$. We
identify $r$ with its image in $\hat G_K^{rB}$.  By construction, we
have morphisms $\hat G_0^{rB}\to \hat G^{rB}_K = G_K^{B}\to \Gamma$.
It remains to check that the composition is optimal.

By construction, our composition is injective when restricted to
edgelets. Since $G_0^{B}\to \Gamma$ is optimal, a vertex of $\hat
G_0^{rB}$ with no incident edgelets in the orbit of the preimage of
$r$ has two gates. Since $r$ is interior to a natural edge
of $G^{rB}_K$, turns at endpoints of edgelets in $r$ are legal
with respect to $\hat G_K^{rB}\to\Gamma$. Hence, we only need to show
that an endpoint of an edgelet of $\hat G_0^{rB}$ in the orbit of the
preimage of $b$ has two gates with respect to $\hat G_0^{rB}\to
\hat G_K^{rB}$. But this is clear since $G_0^{rB}\to G_K^{rB}$ is
optimal.
\end{proof}

\begin{cor}\label{hm4}
There is a constant $M=M(rank~\FF_n)$ so that the following holds.
Suppose we are given a combing diagram which is a vertical stack of
$N>1$ combing rectangles with the projection to $\S'$ of the bottom row
of diameter $>MN$. 

$$
\begindc{2}[30]
\obj(1,1)[11]{$A$}[\south]
\obj(5,1)[51]{$C$}[\south]
\obj(1,5)[15]{$A'$}
\obj(5,5)[55]{$C'$}
\obj(1,2)[12]{}
\obj(5,2)[52]{}
\obj(1,4)[14]{}
\obj(1,5)[15]{}
\obj(5,4)[54]{}
\obj(1,5)[15]{}
\mor{11}{51}{}[\atright,\solidarrow]
\mor{12}{52}{}[\atright,\solidarrow]
\mor{14}{54}{}[\atright,\solidarrow]
\mor{15}{55}{}[\atright,\solidarrow]
\mor{11}{12}{}
\mor{51}{52}{}
\mor{12}{14}{}[\atright,\dotline]
\mor{14}{15}{}[\atright,\solidline]
\mor{52}{54}{}[\atright,\dotline]
\mor{54}{55}{}[\atright,\solidline]
\enddc
$$

Then there is a combing diagram consisting of a
stack of two rectangles with top and bottom paths extended to folding
paths so that:
\begin{itemize}
\item the bottom folding path equals the bottom folding path of the
  given diagram,
\item the top right corner equals the top right corner of the given
  diagram,
\item if the original top folding path is extended to the right, the
  same extension forms a folding path in the resulting diagram,
\item the vertical arrows in the two resulting rectangles are down-up,
\item paths not included in the rectangles have diameter $<MN$ in
  $\S'$.
\end{itemize}

$$
\begindc{1}[5]
\obj(6,6)[11]{$A$}[\south]
\obj(30,6)[51]{$C$}[\south]
\obj(30,30)[55]{$C'$}
\obj(6,12)[12]{}
\obj(6,18)[13]{$A''$}
\obj(18,6)[31]{$B$}[\south]
\obj(18,12)[32]{}
\obj(18,18)[33]{$B''$}
\cmor((18,18)(20,18)(21,19)(24,24)(27,29)(28,30)(30,30)) \pright(30,30){}
\mor{11}{51}{}[\atright,\solidarrow]
\mor{12}{32}{}[\atright,\solidarrow]
\mor{11}{12}{}
\mor{13}{12}{}
\mor{12}{32}{}
\mor{13}{33}{}
\mor{33}{32}{}
\mor{31}{32}{}
\enddc
$$
\end{cor}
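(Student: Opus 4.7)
The plan is to prove the corollary by induction on $N$, iteratively invoking Theorem \ref{hm}. Recall that \ref{hm} takes a single combing rectangle in ``up-down'' vertical configuration (middle row a common parent of top and bottom rows), together with an extension of the top row to a further tree, and produces an equivalent ``down-up'' configuration (middle row a common child) at the cost of truncating the folding path interval to $[0,K]\subset [0,L]$; the discarded folding-path segments $[\hat G_K', G_L']$ on top and $[G_K'', G_L'']$ on bottom each have bounded $\S'$-diameter, say at most $M_0 = M_0(\mathrm{rank}\,\FF_n)$.

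For the base case $N = 2$, the diagram is already a stack of two combing rectangles over a shared middle folding path. After bringing the vertical configuration into ``up-down'' form---which is immediate unless consecutive vertical arrows have the same direction, in which case one first applies Lemmas \ref{A1} and \ref{A2} together with the simplifying moves of Section A.5---a single application of Theorem \ref{hm} yields the required two-rectangle down-up diagram, with residual paths of $\S'$-diameter at most $M_0$.

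For the inductive step with $N > 2$, I would identify within the vertical zigzag a triple of consecutive rows forming an ``up-down'' subpattern and apply Theorem \ref{hm} to it. This converts the middle row of that triple into a common child which in many cases fuses with an adjacent expansion or collapse via the simplifying moves of Section A.5, reducing the stack to $N-1$ rectangles so that the inductive hypothesis applies. Each such reduction truncates the bottom folding path by at most $M_0$ in $\S'$-diameter; choosing $M$ to be a suitable multiple of $M_0$ and using the hypothesis that the bottom row has diameter $>MN$ in $\S'$ then guarantees that after at most $N-1$ iterations the two remaining rectangles are non-trivial while every discarded path has diameter $<MN$.

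The main obstacle will be the combinatorial bookkeeping: one must verify at each iteration that an up-down triple can in fact be located (possibly after preparatory simplifications using \ref{A1} and \ref{A2}), that the simplifying moves preserve the rest of the stack, and that the cumulative truncations can be coordinated so that the final diagram retains $A\to C$ as its bottom folding path, $C'$ as its top right corner, and has the original top folding path extension from $A''$ matching the corresponding edge of the new diagram. A related subtlety is that after all reductions the two surviving down-up rectangles must share their common-child middle row in the same canonical way along the full interval $[A,B]$; otherwise the resulting diagram will not collapse into the clean two-rectangle form pictured in the statement.
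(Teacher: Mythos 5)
Your proposal follows essentially the same route as the paper's (very terse) proof: induct on $N$, merging consecutive same-direction collapse maps into a single collapse and applying Theorem~\ref{hm} to each remaining up-down triple, so that the at most $N-1$ applications each cost a bounded amount and the discarded trailing paths have total diameter $<MN$. The only cosmetic oddity is in your base case: when the two rectangles are already in down-up position there is nothing to convert (and one should not detour through an up-down configuration), so that step should read ``either the pair is already down-up or mergeable into one collapse, or it is up-down and Theorem~\ref{hm} applies directly.''
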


\begin{proof}
When $N=2$ this is Theorem~\ref{hm}. Proceed by induction, combining
consecutive collapse maps into a single collapse map.
\end{proof}

\subsection{Verification of Masur-Minsky axioms}

To prove that $\S'$ is hyperbolic and that folding paths are
reparametrized geodesics it suffices to verify:
\begin{enumerate}[(1)]
\item Folding paths form a coarsely transitive family of paths.
\item For each folding path $G_t$ there is a coarse Lipschitz
  retraction $\S'\to G_t$.
\item Every folding path is uniformly strongly contracting.
\end{enumerate}
See \cite{MM} and also \cite[Section~3]{S}.

We now proceed to verify these properties.

\subsubsection{Folding paths are coarsely transitive}

If $G_1,G_2\in\S'$ let $G_i'\in\S'$ be a rose obtained from $G_i$ by
expanding, then collapsing. After adjusting the metric on $G_1'$, there
is a folding path from $G_1'$ to $G_2'$. 

\subsubsection{Definition of the projection.}

Let $G_t$ be a folding path and $\Gamma\in\S'$. Following \cite{S},
define the {\it projection} $\pi(\Gamma)$ of $\Gamma$ to the folding
path $G_t$ as the time $T$ (or the graph $G_T$), where $T$ is the sup
of the times $t$ such that there is a combing diagram as pictured where $\Gamma$ denotes any representative of $\Gamma$.

$$
\begindc{1}[5]
\obj(6,6)[11]{$G_0$}[\south]
\obj(30,6)[51]{$G_L$}[\south]
\obj(30,30)[55]{$\Gamma$}
\obj(6,12)[12]{}
\obj(6,18)[13]{}
\obj(18,6)[31]{$G_t$}[\south]
\obj(18,12)[32]{}
\obj(18,18)[33]{}
\cmor((18,18)(20,18)(21,19)(24,24)(27,29)(28,30)(30,30)) \pright(30,30){}
\mor{11}{51}{}[\atright,\solidarrow]
\mor{12}{32}{}[\atright,\solidarrow]
\mor{11}{12}{}
\mor{13}{12}{}
\mor{12}{32}{}
\mor{13}{33}{}
\mor{33}{32}{}
\mor{31}{32}{}
\enddc
$$
If no such combing diagram exists, the sup is defined to be 0. (It can
be shown that it always exists, i.e. there is a collapse followed by
an expansion $G_0\to R\leftarrow G_0'$ and an optimal map $G_0'\to
\Gamma$.) 

\subsubsection{The projection is a coarse retraction}

Let $\Gamma=G_{t_0}$ (as underlying graphs, but with possibly
different metrics). Then clearly $T=\pi(\Gamma)\geq t_0$, since we can
construct combing rectangles from $G_{t_0}\to R\leftarrow \Gamma$, where
$R$ has one edge. Suppose we have a combing diagram as pictured.

$$
\begindc{1}[5]
\obj(6,6)[11]{$G_0$}[\south]
\obj(12,6)[21]{$G_{t_0}$}[\south]
\obj(12,18)[23]{$G'_{t_0}$}[\north]
\obj(12,12)[22]{}
\obj(30,6)[51]{$G_L$}[\south]
\obj(30,30)[55]{$\Gamma$}
\obj(6,12)[12]{}
\obj(6,18)[13]{}
\obj(18,6)[31]{$G_t$}[\south]
\obj(18,12)[32]{}
\obj(18,18)[33]{}
\cmor((18,18)(20,18)(21,19)(24,24)(27,29)(28,30)(30,30)) \pright(30,30){}
\mor{21}{22}{}
\mor{23}{22}{}
\mor{11}{51}{}[\atright,\solidarrow]
\mor{12}{32}{}[\atright,\solidarrow]
\mor{11}{12}{}
\mor{13}{12}{}
\mor{12}{32}{}
\mor{13}{33}{}
\mor{33}{32}{}
\mor{31}{32}{}
\enddc
$$

We must show that $d_{\S'}(G_{t_0},G_{t})$ is uniformly
bounded. Apply Lemma~\ref{technical2} to the
optimal map $G'_{t_0}\to \Gamma=G_{t_0}$ (since $G_{t_0}$ and
$G'_{t_0}$ collapse to the same graph, they each have an edge that
determines the same splitting). The conclusion is that the folding
path from $G'_{t_0}$ to $G'_t$ has bounded image in $\S'$, so the same
is true for the path $[G_{t_0},G_t]$.

\subsubsection{The projection is coarsely Lipschitz}

Suppose $\Gamma\to\Gamma'$ is a collapse. We first observe that
$\pi(\Gamma)\geq\pi(\Gamma')$. The argument is in the diagram below.
Construct a combing rectangle using the folding path represented by
the horizontal line ending in $\Gamma'$ and the collapse map
$\Gamma\to\Gamma'$ (note that this may require inserting constant
paths and reparametrizing) and combine it with the given combing
rectangles. The top two rectangles on the left form one combing
rectangle.

$$
\begindc{1}[5]
\obj(6,6)[11]{$G_0$}[\south]
\obj(30,6)[51]{$G_L$}[\south]
\obj(30,18)[53]{$\Gamma'$}[\south]
\obj(18,24)[34]{}
\obj(6,24)[14]{}
\obj(30,24)[54]{$\Gamma$}[\north]
\obj(6,12)[12]{}
\obj(6,18)[13]{}
\obj(18,6)[31]{$G_t$}[\south]
\obj(18,12)[32]{}
\obj(18,18)[33]{}
\mor{14}{34}{}
\mor{34}{54}{}
\mor{14}{13}{}
\mor{34}{33}{}
\mor{54}{53}{}
\mor{11}{51}{}[\atright,\solidarrow]
\mor{12}{32}{}[\atright,\solidarrow]
\mor{11}{12}{}
\mor{13}{12}{}
\mor{12}{32}{}
\mor{13}{33}{}
\mor{33}{32}{}
\mor{31}{32}{}
\mor{33}{53}{}
\enddc
$$

Now we must show that the segment $[\pi(\Gamma'),\pi(\Gamma)]$ has a
uniformly bounded projection in $\S'$. This follows from Corollary~\ref{hm4} with $N=3$ applied to the stack of 3 rectangles in the
diagram below.

$$
\begindc{1}[5]
\obj(6,6)[11]{$G_0$}[\south]
\obj(30,6)[51]{$G_L$}[\south]
\obj(30,18)[53]{$\Gamma$}[\south]
\obj(18,24)[34]{}
\obj(6,24)[14]{}
\obj(30,24)[54]{$\Gamma'$}[\north]
\obj(6,12)[12]{}
\obj(6,18)[13]{}
\obj(18,6)[31]{$G_t$}[\south]
\obj(18,12)[32]{}
\obj(18,18)[33]{}
\mor{14}{34}{}
\mor{34}{54}{}
\mor{13}{14}{}
\mor{33}{34}{}
\mor{53}{54}{}
\mor{11}{51}{}[\atright,\solidarrow]
\mor{12}{32}{}[\atright,\solidarrow]
\mor{11}{12}{}
\mor{13}{12}{}
\mor{12}{32}{}
\mor{13}{33}{}
\mor{33}{32}{}
\mor{31}{32}{}
\mor{33}{53}{}
\enddc
$$

\subsubsection{Folding paths are uniformly strongly contracting}

Let $G_t$ be a folding path and $\Gamma\in\S'$ so that
$d_{\S'}(\Gamma,G_t)>R$ for every $t$ and $R$ is large. Suppose
$\Gamma'\in\S'$ is such that $d_{\S'}(\Gamma,\Gamma')<\mu R$ for a certain
$\mu>0$. After possibly adjusting the metric on $\Gamma'$ there is a
zig-zag path from $\Gamma$ to $\Gamma'$ of length $<\mu R$. Take a
combing diagram associated with the projection of $\Gamma$ to $G_t$
and add to it the stack of combing rectangles induced by the zig-zag
path (in the diagram $d_{\S'}(\Gamma,\Gamma')=3$).

$$
\begindc{2}[30]
\obj(1,1)[11]{$G_0$}[\south]
\obj(5,1)[51]{$G_L$}[\south]
\obj(3,1)[31]{$G_t$}[\south]
\obj(1,5)[15]{}
\obj(5,5)[55]{}
\obj(1,2)[12]{}
\obj(3,2)[32]{}
\obj(1,3)[13]{}
\obj(3,3)[33]{}
\obj(5,3)[53]{$\Gamma$}[\south]

\obj(1,4)[14]{}

\obj(5,4)[54]{}

\obj(1,5)[15]{}

\obj(5,5)[55]{}

\obj(1,6)[16]{}

\obj(5,6)[56]{$\Gamma'$}

\mor{11}{51}{}[\atright,\solidarrow]
\mor{12}{32}{}[\atright,\solidarrow]
\mor{14}{54}{}[\atright,\solidarrow]
\mor{15}{55}{}[\atright,\solidarrow]
\mor{16}{56}{}[\atright,\solidarrow]
\mor{11}{12}{}[\atright,\solidarrow]
\mor{13}{12}{}[\atright,\solidarrow]
\mor{31}{32}{}[\atright,\solidarrow]
\mor{33}{32}{}[\atright,\solidarrow]
\mor{13}{53}{}[\atright,\solidarrow]
\mor{13}{16}{}[\atright,\solidline]
\mor{53}{56}{}[\atright,\solidline]
\enddc
$$

Apply Corollary~\ref{hm4} (with $N$ equal to the height of the
stack) to replace the stack of rectangles
with a stack of height two, plus the trailing folding paths. The
assumption on the distances guarantees that this stack continues beyond
the combing rectangles for the projection.

$$
\begindc{2}[30]
\obj(1,1)[11]{$G_0$}[\south]
\obj(5,1)[51]{$G_L$}[\south]
\obj(3,1)[31]{$G_t$}[\south]
\obj(1,5)[15]{}
\obj(4,3)[43]{}
\obj(4,4)[44]{}
\obj(4,5)[45]{}

\obj(1,2)[12]{}
\obj(3,2)[32]{}
\obj(1,3)[13]{}
\obj(3,3)[33]{}
\obj(5,3)[53]{$\Gamma$}[\south]
\obj(1,4)[14]{}
\obj(1,5)[15]{}
\obj(5,6)[56]{$\Gamma'$}

\mor{11}{51}{}[\atright,\solidarrow]
\mor{12}{32}{}[\atright,\solidarrow]
\mor{14}{44}{}[\atright,\solidarrow]
\mor{15}{45}{}[\atright,\solidarrow]

\mor{11}{12}{}[\atright,\solidarrow]
\mor{13}{12}{}[\atright,\solidarrow]
\mor{31}{32}{}[\atright,\solidarrow]
\mor{33}{32}{}[\atright,\solidarrow]
\mor{13}{53}{}[\atright,\solidarrow]
\mor{13}{15}{}[\atright,\solidline]
\mor{43}{45}{}[\atright,\solidline]
\cmor((4,5)(5,5)(5,6)) \pup(6,6){}
\enddc
$$

Apply Corollary~\ref{hm4}
one more time with $N=4$ and deduce that the projection of $\Gamma'$
cannot precede the projection of $\Gamma$ by more than a bounded
amount. Reversing the roles of $\Gamma$ and $\Gamma'$ proves the
statement. 

To summarize, we have

\begin{thm}
$\S'$ (and hence also $\S$) is hyperbolic and liberal folding paths are uniform
  reparametrized quasi-geodesics.
\end{thm}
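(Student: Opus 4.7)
The plan is to invoke the Masur--Minsky criterion (as in \cite{MM} and \cite[Section~3]{S}): a geodesic metric space equipped with a coarsely transitive family of paths is Gromov hyperbolic, and the paths are uniform reparametrized quasi-geodesics, as soon as each path carries a uniform coarse Lipschitz retraction from the ambient space and is uniformly strongly contracting. I would apply this with $X=\S'$ and the family consisting of all liberal folding paths. The body of the appendix has been preparing precisely the three inputs this criterion demands, so the final theorem is essentially a summary, and my job is to assemble the pieces.

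First, coarse transitivity is the observation in Subsection~A.6.1: any two vertices of $\S'$ can be blown up to roses and, after a metric adjustment, joined by a folding path. Second, I would fix a folding path $G_t$ and use the projection $\pi(\Gamma)$ defined in Subsection~A.6.2 as the supremum of times admitting the indicated combing diagram. To see $\pi$ is a coarse retraction to $G_t$, take $\Gamma=G_{t_0}$ (as a graph, possibly with a different metric) and produce an explicit combing rectangle witnessing $\pi(\Gamma)\ge t_0$; then apply Lemma~\ref{technical2} to the optimal morphism $G'_{t_0}\to G_{t_0}$ (both sides collapse to a common edge, so the hypothesis of the lemma applies) to bound $d_{\S'}(G_{t_0},G_{\pi(\Gamma)})$. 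To see $\pi$ is coarsely Lipschitz it suffices to handle an elementary collapse $\Gamma\to\Gamma'$: a diagram chase using the combing rectangle built from the given collapse shows $\pi(\Gamma)\ge\pi(\Gamma')$, and Corollary~\ref{hm4} with $N=3$ applied to the resulting stack bounds the diameter of the segment $[\pi(\Gamma'),\pi(\Gamma)]$ in $\S'$.

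Third, for strong contraction of $G_t$, take $\Gamma$ with $d_{\S'}(\Gamma,G_s)>R$ for every $s$ and $\Gamma'$ with $d_{\S'}(\Gamma,\Gamma')<\mu R$ for a small universal $\mu>0$. A zig-zag path from $\Gamma$ to $\Gamma'$ (whose length is at most $\mu R$) produces a vertical stack of combing rectangles on top of the combing diagram realizing $\pi(\Gamma)$. Applying Corollary~\ref{hm4} with $N$ equal to the height of the stack reduces it to a stack of just two rectangles plus trailing folding paths that project to bounded sets in $\S'$; the distance hypothesis $d_{\S'}(\Gamma,G_s)>R$ is exactly what guarantees that these trailing paths cannot swallow the combing region. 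A second application of Corollary~\ref{hm4} with $N=4$ then forces $\pi(\Gamma')$ and $\pi(\Gamma)$ to differ by a uniformly bounded amount, giving strong contraction (by symmetry, exchanging the roles of $\Gamma$ and $\Gamma'$).

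With all three properties holding with constants depending only on $\rank(\FF_n)$ (as is the case throughout the appendix thanks to Lemmas~\ref{simple}, \ref{complicated}, \ref{technical2} and Theorem~\ref{hm}/Corollary~\ref{hm4}), the Masur--Minsky criterion gives hyperbolicity of $\S'$ and the reparametrized quasi-geodesic conclusion for liberal folding paths. Finally, since $\S'$ is the barycentric subdivision of $\S$, the identity induces an $Out(\FF_n)$-equivariant quasi-isometry $\S'\to\S$, so hyperbolicity transfers. The main obstacle is not at this assembly step but lies upstream in the proof of Corollary~\ref{hm4}, i.e.\ in controlling combing diagrams through the Handel--Mosher-style swap move (Theorem~\ref{hm}); once that machinery is in hand, the present theorem follows by bookkeeping.
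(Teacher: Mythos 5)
Your proposal is correct and follows essentially the same route as the paper: it invokes the Masur--Minsky criterion and verifies coarse transitivity, the coarse retraction property (via the combing-diagram projection and Lemma~\ref{technical2}), the coarse Lipschitz property (via Corollary~\ref{hm4} with $N=3$), and strong contraction (via Corollary~\ref{hm4} with $N$ the stack height and then $N=4$), exactly as in Subsections A.6.1--A.6.5. The assembly, including the passage from $\S'$ back to $\S$, matches the paper's argument.
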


\bibliography{./ref}

\end{document}